\documentclass{article}
\usepackage{graphicx} % Required for inserting images
\usepackage{amsmath,amssymb,amsthm,textcomp, tabu,appendix}
\usepackage{xcolor}
\usepackage{youngtab}
\usepackage{natbib}
\usepackage{tikz}
\usetikzlibrary{positioning, arrows.meta, calc}
\usepackage{geometry}
\usepackage{mathtools}
\usepackage{hyperref} 
\definecolor{boxblue}{RGB}{173, 216, 230}   % light blue fill
\definecolor{boxgray}{RGB}{200, 190, 230}   % light gray fill
\definecolor{refcolor}{RGB}{0, 128, 128}    % teal for cross-references

\numberwithin{equation}{section}
\newcommand{\boxt}[1]{]_{#1}}

\newcommand{\mb}[1]{\mathbb{#1}}
\newcommand{\mf}[1]{\mathfrak{#1}}

\newcommand{\abs}[1]{\lvert#1\rvert}

\newcommand{\C}{\mathbb{C}}

\newcommand{\m}{\mathfrak{m}}

\newtheorem{theorem}{Theorem}[section]
\newtheorem{conjecture}[theorem]{Conjecture}
\newtheorem{lemma}[theorem]{Lemma}

\newtheorem{maintheorem}{Theorem}

\newtheorem{maincorollary}{Corollary}
\newtheorem{proposition}[theorem]{Proposition}

\theoremstyle{definition}

\newtheorem{definition}{Definition}[section]
\theoremstyle{theorem}
\newtheorem{remark}[definition]{Remark}
\title{A Combinatorial Approach to the CUE Joint Moment Problem}
\author{George Snape}
\date{September 30, 2026}

\begin{document}

\maketitle
\begin{abstract}
 In this paper, we give exact combinatorial expressions for all CUE joint moments on the unit circle, in the case that all exponents are even positive integers, yielding the corresponding asymptotics. 
 
 We offer two equivalent expressions for these quantities: the first is a sum involving Kostka numbers; the second is related to counting integer matrices (magic squares).
 
 As an application of our results, we unify many results in the literature, which were previously disconnected. Moreover, we prove a conjecture of Basor et al. \cite{kn:basor_et_al18} regarding an irregular conformal block structure.
\end{abstract}

\section{Introduction}

In this paper, we calculate joint moments of the characteristic
polynomials, along with their derivatives, of matrices drawn randomly from
$\mathrm{U}(N)$, with respect to the Haar measure. This random matrix ensemble is referred to in the literature as the \emph{circular unitary ensemble} (CUE). For a matrix $U\in \mathrm{U}(N)$, we define the characteristic polynomial of $U$ as
\[\Lambda_U(s)=\det(I-sU^\dagger).\] We also consider the $Z$\emph{-polynomial} associated with the characteristic polynomial of a unitary matrix:
\begin{equation}\label{Z-polynomial}
  \mathcal{Z}_U(\theta)\coloneq e^{- \mf{i}N(\theta+\pi)/2}e^{\mf{i}\sum_{n=1}^N\theta_n/2}\Lambda_U(e^{\mf{i}\theta}),  
\end{equation}
where we set 
$\mf{i}^2=-1$ to avoid confusion with the index $i$, and $e^{\mf{i}\theta_1},\ldots,e^{\mf{i}\theta_N}$ are the eigenvalues of $U$.
It can be easily checked that for $\theta\in\mb{R}$, $\mathcal{Z}_U(\theta)$ is real and satisfies $\left|\mathcal{Z}_U(\theta)\right|=|\Lambda_U(e^{\mf{i}\theta})|.$

The \emph{joint moment problem} for the unitary group, as described in \cite{kn:hug01} and \cite{assiotis2025jointmomentscharacteristicpolynomials}, is the task to calculate, either exactly or asymptotically in $N$, the average 
\begin{equation}\label{eq:1}
\mathbb{E}_{\mathrm{U}(N)}\left[ \prod_{i=0}^{\ell} \bigl|\Lambda_U^{(i)}(z)\bigr|^{h_i}\right],
\end{equation}
where the expectation is taken with respect to the (normalised) Haar measure, $z\in\mathbb{C}$ and $h_0,h_1,\dots,h_\ell\in\mathbb{R}_{\ge 0}$. 

The case where the exponents in \eqref{eq:1} are even integers, and $z$ lies on the unit circle, are distinguished because the averages may be expressed in terms of the Taylor coefficients of a Schur polynomial. This property, which shall be discussed in more detail in Subsection \ref{modified moments and schur}, leads to a \emph{combinatorial approach} to the problem, which has several advantages. 

Firstly, we prove an interesting duality between Kostka numbers and counting integer matrices (see Lemma \ref{Lemma KostkaMagic}). This will unify much of the previous literature on the problem, which we discuss in Subsection \ref{unifying section}. Secondly, our treatment leads to exact expressions for finite matrix size, as well as the corresponding asymptotics. Thirdly, our combinatorial expressions
involving Kostka numbers, in the case that the joint moments are first-order derivatives, are consistent with predictions from conformal field theory for the general solution of Painlev\'e III' in terms of irregular conformal blocks (see Subsection \ref{irreg conformal block section}). This last point proves a conjecture of Basor et al. \cite{kn:basor_et_al18} (see Conjecture \ref{conj2} below).

\subsection{Main results}
In order to introduce our results, we first fix some notations regarding combinatorics of partitions and (weak) compositions.

A partition $\lambda$ is a sequence $\lambda_1\geq\lambda_2\ge\cdots$ of nonnegative integers that eventually reach zero. We write $\lambda =
(\lambda_1, \lambda_2,\ldots, \lambda_n)$ for any $n$ such that $\lambda_{n+1} = 0$ and there is a unique $n$ such that
$\lambda_n > 0$ but $\lambda_{n+1} = 0$ and this $n\coloneq \ell(\lambda)$ is the \emph{length} of $\lambda$. Each $\lambda_j$ is called a \emph{part} of $\lambda$. We call the sum of its parts $\lambda_1+\cdots +\lambda_j$ the size $ |\lambda| $ of the
partition $\lambda$. For \(M\in\mathbb{N}\), we write \(\lambda\vdash M\) if \(|\lambda|=M\), and
\(\lambda\vdash_n M\) if \(|\lambda|=M\) with \(\ell(\lambda)\le n\). 

For $i>0$, let $m_i=m_i(\lambda)$ be the number of parts $\lambda$ equal to $i$. With this notation, we write $\lambda=\langle 1^{m_1}2^{m_2}\cdots\rangle$. Thus, $\langle N^k\rangle$ is the partition with $k$ parts equal to $N$.

In this article, we define a \emph{composition} $\alpha=(\alpha,\ldots,\alpha_n)$ to be a finite sequence of positive integers. We say that $\beta$ is a \emph{weak composition} if it is a finite sequence of non-negative integers. We define its \emph{length} to be its dimension, and its size is the sum of its parts.

For $n\in\mb{N}$ and $x\in \mb{C}$, the falling factorial (or Pochammer symbol) is given by 
\[(x)_n=x(x-1)\cdots (x-n+1),\]
and if $n=0$, we set $(x)_n=1$. For a weak composition $\beta=(\beta_1,\ldots,\beta_k)$, we define
\[(x)_{\beta}=\prod_{i=1}^k(x)_{\beta_i}\quad \text{and} \quad \beta!=\prod_{i=1}^k\beta_i!.\]
For $x\in \mb{C}$, we define the \emph{generalised Pochammer symbol} to be
\[[x]_{\beta}=\prod_{i=1}^{k}\prod_{j=1}^{\beta_i}(x+j-i).\]
For two weak compositions $\alpha$ and $\beta$, we define $\alpha\cup\beta$ to be the weak composition 
\[\alpha\cup\beta=(\alpha_1,\ldots,\alpha_n,\beta_1,\ldots,\beta_m),\]
and $\alpha\pm\beta=(\alpha_1\pm\beta_1,\alpha_2\pm \beta_2,\ldots).$
When $\alpha$ and $\beta$ are partitions, we may consider $\alpha\cup\beta$ as a partition by arranging its parts in decreasing order. We write $\alpha\geq \beta$ if $\alpha_i\ge \beta_i$ for all $i$.

In this paper, we investigate the following averages. 
\begin{definition}\label{def 1}
       Let $k,h,l\in\mb{N}_0\coloneq\mb{N} \cup \{0\}$ with $h\leq 2k$ and $(h_i)_{i=1}^l$ a sequence of non-negative integers which satisfy $\sum_{i=1}^l h_i =h$. We define 
       \begin{align*}
           &\mathcal{M}_N(k;\alpha)\coloneq \int_{\mathrm{U}(N)}\left|\Lambda_U(1)\right|^{2k}\prod_{i=1}^{l}\left(\frac{\Lambda_U^{(i)}(1)}{\Lambda_U(1)}\right)^{h_i}\mathrm{d}U,\\
&|\Lambda|_N(k;\alpha)\coloneq\int_{\mathrm{U}(N)}\left|\Lambda_U(1)\right|^{2k}\prod_{i=1}^{l}\left|\frac{\Lambda_U^{(i)}(1)}{\Lambda_U(1)}\right|^{h_i}\mathrm{d}U,\\
&|\mathcal{Z}|_N(k;\alpha)\coloneq\int_{\mathrm{U}(N)}\left|\mathcal{Z}_U(0)\right|^{2k}\prod_{i=1}^{l}\left|\frac{\mathcal{Z}_U^{(i)}(0)}{\mathcal{Z}_U(0)}\right|^{h_i}\mathrm{d}U,
       \end{align*}
       where we set $\alpha$ to be the partition given by $\alpha=\langle1^{h_1}2^{h_2}\ldots l^{h_l}\rangle$ and we are integrating with respect to the Haar measure.
\end{definition}
Therefore, choosing $\alpha=\langle q^h\rangle$ where $q,h\in\mb{N}_0$, we have \[|\Lambda|_N(k;\langle q^h\rangle)=\int_{\mathrm{U}(N)}\left|\Lambda_U(1)\right|^{2k-h}\left|\Lambda_U^{(q)}(1)\right|^{h}\mathrm{d}U,\] and \[|\Lambda|_N(k;\emptyset)=\int_{\mathrm{U}(N)}\left|\Lambda_U(1)\right|^{2k}\mathrm{d}U,\] where $\emptyset$ denotes the empty set partition. The latter averages are the moments studied in the work of Keating and Snaith \cite{kn:keasna00a}, where they proved that for $k\in\mb{C}$ with $\mathfrak{R}(k)>-1/2$,
\begin{equation}\label{KS moments 1}
    |\Lambda|_N(k;\emptyset)= \frac{G(N+2k+1)G(N+1)G(k+1)^2}{G(N+k+1)^2 G(2k+1)},
\end{equation}
and
\begin{equation}
    \lim_{N\to\infty}  \frac{|\Lambda|_N(k;\emptyset)}{N^{k^2}}= \frac{G(k+1)^2}{G(2k+1)},
\end{equation}
where $G(z)$ is the Barnes $G$-function.

The leading order term of the joint moments is of particular interest. Thus, we define
\begin{align*}
 &\mathcal{M}(k;\alpha)\coloneq\lim_{N\to\infty}\frac{\mathcal{M}_N(k;\alpha)}{N^{k^2+|\alpha|}},\\
    &|\Lambda|(k;\alpha)\coloneq\lim_{N\to\infty}\frac{|\Lambda|_N(k;\alpha)}{N^{k^2+|\alpha|}},\\
    &|\mathcal{Z}|(k;\alpha)\coloneq\lim_{N\to\infty}\frac{|\mathcal{Z}|_N(k;\alpha)}{N^{k^2+|\alpha|}}.
\end{align*}
By the rotational invariance of the unitary group, the quantities $|\Lambda|_N(k;\alpha)$ and $|\mathcal{Z}|_N(k;\alpha)$ depend only on the modulus of the argument of the characteristic polynomial, so we may take it to be any fixed point on the unit circle.

A theme of our results is that they have two different combinatorial formulations. We recall that the \emph{Kostka numbers} $K_{\lambda\mu}$ is the number of semistandard Young tableaux of shape $\lambda$ and content $\mu$, and the \emph{hook number} $H_\lambda$ is the product of the hook lengths (see Subsection \ref{rep} for more details).

We now introduce our first main result, which establishes both exact and asymptotic expressions for $\mathcal{M}_N(k;\alpha)$.
\begin{maintheorem}\label{Theorem A}
         For a partition $\alpha=\langle1^{h_1}2^{h_2}\ldots l^{h_l}\rangle$ with $\sum_{i=1}^lh_i=h\leq 2k$,
     \begin{align*}
         \mathcal{M}_N(k;\alpha)&=\alpha!(-1)^{|\alpha|}|\Lambda|_N(k;\emptyset)\sum_{\substack{\\ \mu\vdash_{k}|\alpha|}}K_{\mu\alpha}\frac{ [-N]_\mu[k]_\mu}{H_\mu [2k]_\mu},
     \end{align*}
     where the sum is over partitions $\mu$ with at most $k$ parts and $|\alpha|=\sum_{i=1}^lih_i$.
\end{maintheorem}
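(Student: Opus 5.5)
Our plan is to reduce $\mathcal{M}_N(k;\alpha)$ to Taylor coefficients of a single Schur polynomial, expand those coefficients in the Schur basis using Kostka numbers, and then evaluate each resulting term by the Keating--Snaith moment \eqref{KS moments 1} times a ratio of generalised Pochhammer symbols.

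\textbf{Step 1: generating function.} Write
\[
\Lambda_U^{(i)}(1)/\Lambda_U(1)=i!\,[t^i]\,\Lambda_U(1+t)/\Lambda_U(1).
\]
Since $\prod_{i}(\Lambda_U^{(i)}(1)/\Lambda_U(1))^{h_i}$ involves $h\le 2k$ factors, we introduce variables $t_1,\dots,t_h$, one for each part of $\alpha$. Then
\[
\mathcal{M}_N(k;\alpha)=\alpha!\,[t^{\alpha}]\int_{\mathrm{U}(N)}|\Lambda_U(1)|^{2k}\prod_{j=1}^h\frac{\Lambda_U(1+t_j)}{\Lambda_U(1)}\,\mathrm{d}U,
\]
where $\alpha!=\prod_i (i!)^{h_i}$, matching the prefactor in Theorem \ref{Theorem A}. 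Because $h\le 2k$, we may split $|\Lambda_U(1)|^{2k}=\Lambda_U(1)^k\overline{\Lambda_U(1)}^{\,k}$ and absorb the $h$ denominators into these factors. This leaves an average of a product of $2k$ characteristic polynomials, some shifted and some not.

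\textbf{Step 2: Schur function evaluation.} We then apply the standard Bump--Gamburd / Conrey--Farmer--Keating--Rubinstein--Snaith identity. On the unit circle, $\overline{\Lambda_U(z)}$ is expressible through $\Lambda_{U}(1/\bar z)$ and $\det U^\dagger$. The resulting average equals, up to an explicit monomial, the Schur polynomial $s_{\langle N^k\rangle}(x_1,\dots,x_{2k})$ evaluated at $x_j=1+t_j$ for $j\le h$ and $x_j=1$ otherwise. This is the content of Subsection \ref{modified moments and schur}, which we take as given.

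\textbf{Step 3: Kostka expansion.} Substituting $x_j=1+t_j$, we need the coefficient of $t^\alpha$ in $s_{\langle N^k\rangle}(1+t_1,\dots,1+t_h,1,\dots,1)$. We use the shifted binomial expansion
\[
s_\lambda(1+t)=\sum_{\mu\subseteq\lambda}c_{\lambda\mu}\,s_\mu(t),
\]
with $c_{\lambda\mu}$ given by a ratio of dimensions, as in the Lascoux/Okounkov--Olshanski binomial formula; restricting to $h$ variables only sets the other $t$'s to zero. Expanding $s_\mu(t_1,\dots,t_h,0,\ldots)$ in monomials, the coefficient of $t^\alpha$ is exactly $K_{\mu\alpha}$. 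This produces the sum over $\mu\vdash|\alpha|$, and $\mu\subseteq\langle N^k\rangle$ forces $\ell(\mu)\le k$. The remaining task is to identify
\[
c_{\langle N^k\rangle,\mu}\,\Big/\,s_{\langle N^k\rangle}(1^{2k})=(-1)^{|\mu|}\frac{[-N]_\mu[k]_\mu}{H_\mu[2k]_\mu}.
\]
Since $s_{\langle N^k\rangle}(1^{2k})$ equals $|\Lambda|_N(k;\emptyset)$ by the same correspondence, this identity would complete the proof.

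\textbf{Step 4: main obstacle.} The hard step is computing $c_{\lambda\mu}$ for the rectangle $\lambda=\langle N^k\rangle$ in $2k$ variables. We would use the Okounkov--Olshanski formula
\[
c_{\lambda\mu}=\frac{s^*_\mu(\lambda)}{H_\mu}\cdot\frac{s_\lambda(1^{n})}{[n]_\mu},\qquad n=2k,
\]
where $s^*_\mu$ is the shifted Schur function. For a rectangle, $s^*_\mu(\langle N^k\rangle)$ factorises via the Frobenius/content formula into a product over cells $(i,j)\in\mu$. Matching that product with $(-1)^{|\mu|}[-N]_\mu[k]_\mu$ requires care with the conjugation and row/column conventions. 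As a check, we would verify the case $\mu=(1)$ directly, where the right-hand side is $Nk/(2k)=N/2$. If the shifted-Schur route gets messy, the fallback is to compute $c_{\lambda\mu}$ by the Jacobi--Trudi determinant for the rectangle and Cauchy--Binet.
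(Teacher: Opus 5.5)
Your outline is the paper's proof: Proposition \ref{prop modified moments} reduces the problem to $\partial^\alpha s_{\langle N^k\rangle}(\{1\}^{2k})$. You then use the Okounkov--Olshanski binomial formula, $\partial^\alpha s_\mu(\{0\}^{2k})=\alpha!K_{\mu\alpha}$, and an evaluation of $s^*_\mu$ at the rectangle. The gap is in Step 4, which is the only step with real content.

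\textbf{The normalisation error.} The binomial formula you quote has a spurious factor $1/H_\mu$. The correct statement (Theorem \ref{binom}) is $c_{\lambda\mu}=s_\lambda(\{1\}^n)\,s^*_\mu(\lambda)/[n]_\mu$. For instance, with $n=1$, $s_{(N)}(1+t)=(1+t)^N$ has $t^2$-coefficient $\binom{N}{2}$, whereas your formula gives $N(N-1)/4$.

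\textbf{The resulting false target.} To reach the stated answer, you then set yourself the target $s^*_\mu(\langle N^k\rangle)=(-1)^{|\mu|}[-N]_\mu[k]_\mu$. This is false whenever $H_\mu>1$. For $k=1$ and $\mu=(2)$, Definition \ref{shifted def} gives $s^*_{(2)}(N)=(N)_2=N(N-1)$, while your target is $2N(N-1)$. The two errors cancel in the identity displayed at the end of your Step 3, which is why that identity is correct. However, the computation you planned in Step 4 cannot succeed. Your $\mu=(1)$ sanity check cannot detect the problem because $H_{(1)}=1$; checking $\mu=(2)$ would have caught it.

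\textbf{What is actually needed.} You need Dehaye's evaluation (Lemma \ref{LemmaDehaye}), $s^*_\mu(\{N\}^k)=(-1)^{|\mu|}[-N]_\mu[k]_\mu/H_\mu$. Combine it with the stability $s^*_\mu(x_1,\ldots,x_n,0)=s^*_\mu(x_1,\ldots,x_n)$, which passes from $(\{N\}^k,\{0\}^k)$ in $2k$ variables to $\{N\}^k$ in $k$ variables. This is exactly how the paper closes the argument, and your proposal leaves it as an unresolved ``obstacle''.

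If you want to prove this evaluation rather than cite it, one workable route is the Okounkov--Olshanski identity $s^*_\mu(\lambda)=(|\lambda|)_{|\mu|}\,f^{\lambda/\mu}/f^{\lambda}$. Here $f^{\lambda/\mu}$ counts standard tableaux of the skew shape $\lambda/\mu$. For $\lambda=\langle N^k\rangle$, rotating $\lambda/\mu$ by $180^\circ$ gives the straight shape $(N-\mu_k,\ldots,N-\mu_1)$. The hook length formula (Theorem \ref{hook length formula}) then turns the ratio into an explicit product, which can be matched with $(-1)^{|\mu|}[-N]_\mu[k]_\mu/H_\mu$.
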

\begin{maincorollary}\label{Cor A}
  For a partition $\alpha=\langle1^{h_1}2^{h_2}\ldots l^{h_l}\rangle$ with $\sum_{i=1}^lh_i=h\leq 2k$,
    \begin{align*}
        \mathcal{M}(k;\alpha)&=\alpha!|\Lambda|(k;\emptyset)\sum_{\substack{\\ \mu\vdash_{k}|\alpha|}}K_{\mu\alpha}\frac{ [k]_\mu}{H_\mu [2k]_\mu}.
    \end{align*}
\end{maincorollary}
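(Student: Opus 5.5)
Corollary~\ref{Cor A} follows from Theorem~\ref{Theorem A} by dividing by $N^{k^2+|\alpha|}$ and letting $N\to\infty$ term by term. The sum over $\mu\vdash_k|\alpha|$ is finite and its range does not depend on $N$. The factors $\alpha!$, $K_{\mu\alpha}$, $H_\mu$, $[k]_\mu$ and $[2k]_\mu$ are also independent of $N$. Since $[2k]_\mu=\prod_{i}\prod_{j\le\mu_i}(2k+j-i)$ and $\ell(\mu)\le k$, every factor satisfies $2k+j-i\ge 2k+1-k>0$, so no denominator vanishes. It therefore suffices to find the asymptotics of $|\Lambda|_N(k;\emptyset)$ and of $[-N]_\mu$ separately.

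For the first, \eqref{KS moments 1} and its limit give $|\Lambda|_N(k;\emptyset)\sim N^{k^2}|\Lambda|(k;\emptyset)$, using the definition $|\Lambda|(k;\emptyset)=\lim_N |\Lambda|_N(k;\emptyset)/N^{k^2}=G(k+1)^2/G(2k+1)$.

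For the second, by definition
\[
[-N]_\mu=\prod_{i=1}^{\ell(\mu)}\prod_{j=1}^{\mu_i}(-N+j-i)=(-1)^{|\mu|}\prod_{i,j}(N-j+i).
\]
This is a polynomial in $N$ of degree $|\mu|=|\alpha|$ with leading coefficient $(-1)^{|\alpha|}$. Hence $[-N]_\mu/N^{|\alpha|}\to(-1)^{|\alpha|}$.

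Combining the two limits, the prefactor $(-1)^{|\alpha|}$ in Theorem~\ref{Theorem A} cancels against the $(-1)^{|\mu|}=(-1)^{|\alpha|}$ from $[-N]_\mu$, leaving exactly
\[
\mathcal{M}(k;\alpha)=\alpha!\,|\Lambda|(k;\emptyset)\sum_{\mu\vdash_k|\alpha|}K_{\mu\alpha}\frac{[k]_\mu}{H_\mu[2k]_\mu}.
\]
No step is a real obstacle. The only points needing care are the sign bookkeeping and checking that the denominators $[2k]_\mu$ are nonzero, which the constraint $\ell(\mu)\le k$ ensures.
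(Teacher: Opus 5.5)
Your proposal is correct and follows essentially the same route as the paper. Both apply Theorem~\ref{Theorem A}, use $[-N]_\mu\sim(-N)^{|\alpha|}$ together with the Keating--Snaith asymptotic $|\Lambda|_N(k;\emptyset)\sim N^{k^2}|\Lambda|(k;\emptyset)$, and let the two factors $(-1)^{|\alpha|}$ cancel; taking the limit term by term over the finite sum and checking $[2k]_\mu\neq 0$ is slightly more careful than the paper's chained $\sim$ relations, which would be ill-defined if the sum vanished.
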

Our second main result gives an alternative expression for the joint moments, via counting integer matrices.
If $A=(a_{i,j})$ is an $m\times n$ matrix with nonnegative integer values, its \emph{row-sum vector} $\operatorname{row}(A)\in\mb{Z}^m$ and \emph{column-sum vector} $\operatorname{col}(A)\in\mb{Z}^n$ are given by
\[\operatorname{row}(A)_i=\sum_{j=1}^na_{i,j}, \quad \quad \operatorname{col}(A)_j=\sum_{i=1}^ma_{i,j}.\]
Given two weak compositions $\beta=(\beta_1,\ldots,\beta_m)$ and $\tilde{\beta}=(\tilde{\beta}_1,\ldots,\tilde{\beta}_n)$, as in \cite{kn:diagam04}, we denote by\footnote{The notation $N_{\beta,\tilde{\beta}}$ should not be confused with the size of the matrix.} $N_{\beta,\tilde{\beta}}$ the number of $m\times n$ matrices $A$ with nonnegative integer values that satisfy $\operatorname{row}(A)=\beta$ and $\operatorname{col}(A)=\tilde{\beta}$. In the case that $m=n=k$ and $\beta_1=\cdots =\beta_k=\tilde{\beta_1}=\cdots =\tilde{\beta}_k$, $N_{\beta,\tilde{\beta}}$ counts the number of \emph{magic squares of order $k$}.
\begin{maintheorem}\label{Theorem B}
     For a partition $\alpha=\langle1^{h_1}2^{h_2}\ldots l^{h_l}\rangle$ with $\sum_{i=1}^lh_i=h\leq 2k$,
     \begin{align*}
         \mathcal{M}_N(k;\alpha)
         &=\alpha!(-1)^{|\alpha|}|\Lambda|_N(k;\emptyset)\sum_{\substack{\mathfrak{m}=(m_1,\ldots,m_k)\\ \sum_i^km_i=|\alpha|}}N_{\alpha,\mathfrak{m}}\frac{[-N]_\mathfrak{m}}{[2k]_\mathfrak{m}}\frac{\prod_{1\le i<j\le k}(m_i-m_j+j-i)}{\prod_{i=1}^k(k-i)!},
     \end{align*}
     where the sum is over weak compositions $\mathfrak{m}$ of length $k$ and size $|\alpha|$.
\end{maintheorem}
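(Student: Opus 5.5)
Theorem \ref{Theorem B} follows from Theorem \ref{Theorem A} once the Kostka sum is rewritten as a sum over weak compositions. The bridge is the classical duality between Kostka numbers and integer matrices: the RSK correspondence gives
\[
N_{\alpha,\mathfrak{m}}=\sum_{\mu}K_{\mu\alpha}K_{\mu\mathfrak{m}}
\]
for weak compositions $\alpha,\mathfrak{m}$. This is also the content of Lemma \ref{Lemma KostkaMagic} announced in the introduction. The idea is to substitute this identity into the right-hand side of Theorem \ref{Theorem B} and collapse the inner sum over $\mathfrak{m}$ to a single term for each $\mu$, namely the summand of Theorem \ref{Theorem A}.

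\textbf{Step 1 (reduction).} Because $\mathfrak{m}$ has length $k$, we have $K_{\mu\mathfrak{m}}=0$ unless $\ell(\mu)\le k$. Substituting the identity and interchanging sums, it suffices to prove, for each $\mu\vdash_k|\alpha|$,
\[
\sum_{\mathfrak{m}} K_{\mu\mathfrak{m}}\,\frac{[-N]_{\mathfrak{m}}}{[2k]_{\mathfrak{m}}}\,\frac{\prod_{i<j}(m_i-m_j+j-i)}{\prod_{i=1}^k(k-i)!}
=\frac{[-N]_\mu[k]_\mu}{H_\mu[2k]_\mu}.
\]
Since the statement holds for every $N$, I would aim to prove it as a polynomial identity in $x=-N$.

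\textbf{Step 2 (interpretation).} The quantity $\prod_{i<j}(m_i-m_j+j-i)/\prod_i(k-i)!$ equals $s_{\mathfrak{m}}(1^k)$ when $\mathfrak{m}$ is a partition, so the identity looks like a Schur-function/Weyl-character statement. My expectation is that this identity is not proved directly but is obtained by repeating the derivation of Theorem \ref{Theorem A} with a different expansion. The paper writes $\mathcal{M}_N$ via Taylor coefficients of a Schur polynomial (Subsection \ref{modified moments and schur}). Concretely, the moment equals $|\Lambda|_N(k;\emptyset)$ times coefficients of a $k$-variable Schur polynomial $s_{\langle N^k\rangle}(1+x_1,\dots,1+x_{2k})$, with derivatives extracted through $\alpha$.

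\textbf{Step 3 (alternative expansion).} In that derivation, Theorem \ref{Theorem A} arises by expanding the power-sum/monomial data in $x$ into Schur functions $s_\mu$, which produces $K_{\mu\alpha}$, and then evaluating each $s_\mu$ with its hook/content formula, which produces $[-N]_\mu[k]_\mu/(H_\mu[2k]_\mu)$. For Theorem \ref{Theorem B}, I would instead expand in monomials $x^{\mathfrak{m}}$ of the $k$ variables. The coefficient of $\prod_i x_i^{m_i}$ in the Taylor expansion of the Schur polynomial is computable from a determinantal (Jacobi--Trudi / Vandermonde-ratio) expression, and gives exactly
\[
\frac{[-N]_{\mathfrak{m}}}{[2k]_{\mathfrak{m}}}\,\frac{\prod_{i<j}(m_i-m_j+j-i)}{\prod_i(k-i)!}.
\]
Pairing these monomials against the product of elementary symmetric pieces indexed by $\alpha$ then gives the matrix count $N_{\alpha,\mathfrak{m}}$, by the standard fact $\prod_j h_{\alpha_j}$ or $e_{\alpha_j}$ expands with coefficients $N_{\alpha,\mathfrak{m}}$ on monomials.

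\textbf{Main obstacle.} The hard part is the Taylor-coefficient computation in Step 3. The Vandermonde ratio must be handled for non-partition weak compositions $\mathfrak{m}$, which requires careful antisymmetrization and sign bookkeeping. The strategy is to verify consistency with the Step 1 identity, which is equivalent to the Step 3 result via $N_{\alpha,\mathfrak{m}}=\sum_\mu K_{\mu\alpha}K_{\mu\mathfrak{m}}$.
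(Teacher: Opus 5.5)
Your Step 1 reduction is sound, and the identity it reduces to is true. Write $f(\mathfrak m)=\frac{[-N]_{\mathfrak m}}{[2k]_{\mathfrak m}}\frac{\prod_{i<j}(m_i-m_j+j-i)}{\prod_i(k-i)!}$. By RSK it suffices to show $\sum_{\mathfrak m}K_{\mu\mathfrak m}f(\mathfrak m)=f(\mu)$ for each $\mu\vdash_k|\alpha|$, and $f(\mu)$ is the Theorem A summand by the Weyl dimension formula.

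\textbf{The gap.} The proposal never proves this identity, and the route in Step 3 cannot work. The polynomial $s_{\langle N^k\rangle}(1+x_1,\ldots,1+x_{2k})$ is symmetric in $2k$ variables. Its Taylor coefficient of $x^{\mathfrak m}$ is therefore invariant under permuting $\mathfrak m$, whereas $f$ is not. For $k=2$ one has $f(1,0)=-N/2$ but $f(0,1)=0$, while the coefficients of $x_1$ and $x_2$ coincide and are nonzero. So no monomial expansion of that polynomial yields $f(\mathfrak m)$ coefficientwise. Moreover, $\prod_j e_{\alpha_j}$ counts $0$--$1$ matrices rather than $N_{\alpha,\mathfrak m}$, and the extraction $\partial^\alpha$ contains no ``pairing'' step. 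What you call the main obstacle is the entire content of the theorem, and ``verifying consistency with Step 1'' is circular. Note also that Lemma~\ref{Lemma KostkaMagic} is not the RSK identity; it is precisely the weighted identity you are missing.

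\textbf{The missing idea} is that $f$ is skew-symmetric in the \emph{shifted} variables $m_i-i$. Indeed, $[x]_{\mathfrak m}=\prod_i (x+m_i-i)!/(x-i)!$ is symmetric in them, and $\prod_{i<j}(m_i-m_j+j-i)$ is the shifted Vandermonde. Consequences:
\begin{itemize}
\item Every $\mathfrak m$ with two equal values of $m_i-i$ contributes zero.
\item Every other $\mathfrak m$ is a rearrangement of $\sigma*\mu=(\mu_i-i+\sigma(i))_i$ for a unique partition $\mu\vdash_k|\alpha|$ and a unique $\sigma\in S_k$, with $f(\mathfrak m)=\operatorname{sgn}(\sigma)f(\mu)$. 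Terms where $\sigma*\mu$ has a negative entry vanish.
\item Since $N_{\alpha,\mathfrak m}$ depends only on the multiset of entries of $\mathfrak m$, this gives $\sum_{\mathfrak m}N_{\alpha,\mathfrak m}f(\mathfrak m)=\sum_{\mu}f(\mu)\sum_{\sigma\in S_k}\operatorname{sgn}(\sigma)N_{\sigma*\mu,\alpha}$.
\end{itemize}
The inner sum equals $K_{\mu\alpha}$, by expanding the Jacobi--Trudi determinant $s_\mu=\det(h_{\mu_i-i+j})$ in monomials. This is exactly how the paper proves Lemma~\ref{Lemma KostkaMagic}, and it needs no RSK.

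If you prefer to keep your reduction, apply the same regrouping to $\sum_{\mathfrak m}K_{\nu\mathfrak m}f(\mathfrak m)$. Using $K_{\nu\beta}=\langle s_\nu,h_\beta\rangle$ in the Hall inner product, you get $\sum_{\sigma}\operatorname{sgn}(\sigma)K_{\nu,\sigma*\mu}=\langle s_\nu,s_\mu\rangle=\delta_{\nu\mu}$, which closes Step 1 directly. Either way, the shifted (dot-action) skew-symmetry of $f$ is the ingredient your proposal lacks.
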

\begin{maincorollary}\label{Cor B}
  For a partition $\alpha=\langle1^{h_1}2^{h_2}\ldots l^{h_l}\rangle$ with $\sum_{i=1}^lh_i=h\leq 2k$,
    \begin{align*}
        \mathcal{M}(k;\alpha)=\alpha!\sum_{\substack{\mf{m}=(m_1,\ldots,m_k)\\ \sum_i^km_i=|\alpha|}}N_{\alpha,\mf{m}}\frac{\prod_{1\le i<j\le k}(m_i-m_j+j-i)}{\prod_{i=1}^k(m_i+2k-i)!}
    \end{align*}
\end{maincorollary}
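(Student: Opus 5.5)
Corollary \ref{Cor B} follows from Theorem \ref{Theorem B} by taking the limit term by term. For fixed $k$ and $\alpha$, the sum in Theorem \ref{Theorem B} runs over the finitely many weak compositions $\mathfrak{m}$ of length $k$ and size $|\alpha|$. Apart from $|\Lambda|_N(k;\emptyset)$, the only factor in each summand that depends on $N$ is $[-N]_\mathfrak{m}$. So we divide both sides by $N^{k^2+|\alpha|}$, send $N\to\infty$ summand by summand, and check that the constants combine to the stated ones.

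\textbf{Step 1: the $N$-dependent factors.} By the Keating--Snaith asymptotic,
\[
|\Lambda|_N(k;\emptyset)/N^{k^2}\to |\Lambda|(k;\emptyset)=\frac{G(k+1)^2}{G(2k+1)}.
\]
By definition, $[-N]_\mathfrak{m}=\prod_{i=1}^k\prod_{j=1}^{m_i}(-N+j-i)$. This is a polynomial in $N$ of degree $\sum_i m_i=|\alpha|$ with leading coefficient $(-1)^{|\alpha|}$, so $[-N]_\mathfrak{m}/N^{|\alpha|}\to(-1)^{|\alpha|}$. This sign cancels the prefactor $(-1)^{|\alpha|}$ in Theorem \ref{Theorem B}, since $(-1)^{2|\alpha|}=1$.

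\textbf{Step 2: simplifying the constants.} Next we write $[2k]_\mathfrak{m}$ as a ratio of factorials:
\[
[2k]_\mathfrak{m}=\prod_{i=1}^k\prod_{j=1}^{m_i}(2k-i+j)=\prod_{i=1}^k\frac{(m_i+2k-i)!}{(2k-i)!}.
\]
Hence the limit of the $\mathfrak{m}$-summand is $\alpha!\,N_{\alpha,\mathfrak{m}}\prod_{i<j}(m_i-m_j+j-i)\big/\prod_i(m_i+2k-i)!$, multiplied by the $\mathfrak{m}$-independent constant
\[
C_k=\frac{G(k+1)^2}{G(2k+1)}\cdot\frac{\prod_{i=1}^k(2k-i)!}{\prod_{i=1}^k(k-i)!}.
\]

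\textbf{Step 3: showing $C_k=1$.} This is the only step with any content. We use the product formula $G(n+1)=\prod_{j=0}^{n-1}j!$. It gives $\prod_{i=1}^k(k-i)!=\prod_{j=0}^{k-1}j!=G(k+1)$. It also gives
\[
\prod_{i=1}^k(2k-i)!=\prod_{j=k}^{2k-1}j!=\frac{G(2k+1)}{G(k+1)}.
\]
Substituting both into $C_k$ yields
\[
C_k=\frac{G(k+1)^2}{G(2k+1)}\cdot\frac{G(2k+1)}{G(k+1)^2}=1.
\]
This is why the factor $|\Lambda|(k;\emptyset)$ does not appear in Corollary \ref{Cor B}, and the stated formula follows.
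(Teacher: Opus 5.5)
Your proof is correct and is essentially the paper's argument. The paper applies Lemma \ref{Lemma KostkaMagic} to Corollary \ref{Cor A} (limit first, then duality), whereas you take the $N\to\infty$ limit of Theorem \ref{Theorem B} directly (duality first, then limit). The two orders are interchangeable because the sum is finite. Your Barnes $G$ computation is exactly the paper's cancellation of $|\Lambda|(k,0)=\prod_{i=1}^k (k-i)!/(2k-i)!$ against the factor $\prod_{i=1}^k (2k-i)!/(k-i)!$ coming from $[2k]_{\mathfrak m}$.
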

\begin{remark}
    Theorem \ref{Theorem B} is a consequence of a duality between Kostka numbers and counting integer matrices (magic squares), which we will prove in Section \ref{section 3}. The precise statement of this duality is given in Lemma \ref{Lemma KostkaMagic}. It states that we may rewrite our sum in Theorem \ref{Theorem A} as a sum over weak compositions instead of partitions, whilst replacing the Kostka numbers with magic squares.
    
   The expression in Theorem \ref{Theorem B} thus follows from Lemma \ref{Lemma KostkaMagic} and the identity
    \[\frac{[k]_\lambda}{H_\lambda}=\frac{\prod_{i<j}(\lambda_i-\lambda_j-i+j)}{\prod_{i}(k-i)!},\]
    which shall be explained in Subsection \ref{rep}.
\end{remark}

Our methods dictate that we restrict our investigation of $|\Lambda|_N(k;\alpha)$ to even moments, by which we mean all exponents of the characteristic polynomials and their derivatives are even integers.  Therefore, in order to index these averages, $2\!\cdot\!\alpha$ henceforth denotes the partition \[2\!\cdot\!\alpha\equiv \alpha\cup\alpha=(\alpha_1,\alpha_1,\alpha_2,\alpha_2,\ldots,\alpha_{\ell(\alpha)},\alpha_{\ell(\alpha)}).\]

The following theorems prove that, in order to compute ${|\Lambda|_N(k;2\!\cdot\!\alpha)}$ and $ {|\mathcal{Z}|_N(k;2\!\cdot\!\alpha)}$, it suffices to compute $\mathcal{M}_N(k;\beta)$ where $\beta\le 2\!\cdot\!\alpha$.
\begin{theorem}\label{Theorem 1}
For a partition $\alpha$ of length $\le k$,
\begin{equation}
\begin{split}
    {|\Lambda|_N(k;2\!\cdot\!\alpha)}=\sum_{\substack{\beta\leq \alpha\\ \gamma\leq \beta}}(N)_{\alpha-\beta}(-1)^{|\beta|} \binom{\alpha}{\beta}L(\beta,\gamma)\mathcal{M}_N(k;\alpha\cup\gamma),
    \end{split}
\end{equation}    and 
\begin{equation}\label{1.1 asymp}
     {|\Lambda|(k;2\!\cdot\!\alpha)}=\sum_{\substack{\beta\leq \alpha\\ }}(-1)^{|\beta|}\binom{\alpha}{\beta}\mathcal{M}(k;\alpha\cup\beta),
\end{equation}
where $\binom{\alpha}{\beta}=\frac{\alpha!}{\beta!(\alpha-\beta)!}$ and $L(\beta,\gamma)$ are the \emph{(multi-index) Lah numbers} given by
\[L(\beta,\gamma)=\prod_iL(\beta_i,\gamma_i)=\prod_i\binom{\beta_i-1}{\gamma_i-1}\frac{\beta_i!}{\gamma_i!}.\]
\end{theorem}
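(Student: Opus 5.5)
The plan is to express each factor $|\Lambda_U^{(q)}(1)|^2/|\Lambda_U(1)|^2$ as a finite linear combination of the holomorphic ratios $\Lambda_U^{(j)}(1)/\Lambda_U(1)$, using the functional equation of the characteristic polynomial. Expanding the product and integrating term by term then produces the averages $\mathcal{M}_N(k;\cdot)$.

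\textbf{Step 1: functional equation.} For $U\in\mathrm{U}(N)$ we have, as an identity of polynomials in $s$,
\[
\overline{\Lambda_U(\bar s)}=\det(I-sU)=(-s)^N\det U\,\det(I-s^{-1}U^\dagger)=c_U\, s^N\Lambda_U(1/s),
\]
where $c_U=(-1)^N\det U$ satisfies $|c_U|=1$. Differentiating $i$ times and setting $s=1$ gives
\[
\overline{\Lambda_U^{(i)}(1)}=c_U D_i,\qquad D_i:=\frac{d^i}{ds^i}\Bigl[s^N\Lambda_U(1/s)\Bigr]_{s=1},
\]
and in particular $\overline{\Lambda_U(1)}=c_U\Lambda_U(1)$. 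Hence
\[
\frac{|\Lambda_U^{(i)}(1)|^2}{|\Lambda_U(1)|^2}=\frac{\Lambda_U^{(i)}(1)}{\Lambda_U(1)}\cdot\frac{D_i}{\Lambda_U(1)},
\]
and the unimodular constant $c_U$ cancels.

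\textbf{Step 2: expanding $D_i$.} By Leibniz's rule, $D_i=\sum_{b\le i}\binom{i}{b}(N)_{i-b}\,\frac{d^b}{ds^b}\Lambda_U(1/s)\big|_{s=1}$. I then use the classical Faà di Bruno identity for composition with $s\mapsto 1/s$:
\[
\frac{d^b}{ds^b}f(1/s)\Big|_{s=1}=(-1)^b\sum_{g\le b}L(b,g)f^{(g)}(1),
\]
with $L(0,0)=1$. This follows from $\frac{d^b}{ds^b}f(1/s)=\sum_g(-1)^bL(b,g)s^{-b-g}f^{(g)}(1/s)$, which I would prove by induction on $b$ using the recurrence $L(b+1,g)=(b+g)L(b,g)+L(b,g-1)$. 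Dividing by $\Lambda_U(1)$ gives
\[
\frac{D_i}{\Lambda_U(1)}=\sum_{b\le i}\sum_{g\le b}\binom{i}{b}(N)_{i-b}(-1)^b L(b,g)\,\frac{\Lambda_U^{(g)}(1)}{\Lambda_U(1)}.
\]

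\textbf{Step 3: assembling the product.} I take the product over the parts $\alpha_1,\ldots,\alpha_{\ell(\alpha)}$ of $\alpha$ and expand. The multi-index notation $(N)_{\alpha-\beta}$, $\binom{\alpha}{\beta}$, $L(\beta,\gamma)$ and $(-1)^{|\beta|}$ collects the coefficients exactly. The remaining random factor is the product of $\Lambda_U^{(\alpha_j)}(1)/\Lambda_U(1)$ and $\Lambda_U^{(\gamma_j)}(1)/\Lambda_U(1)$ over $j$, which is the integrand indexed by the partition $\alpha\cup\gamma$; parts $\gamma_j=0$ contribute the factor $1$ and are dropped. Multiplying by $|\Lambda_U(1)|^{2k}$ and integrating gives $\mathcal{M}_N(k;\alpha\cup\gamma)$. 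The number of ratio factors is at most $2\ell(\alpha)\le 2k$, so the hypothesis $\ell(\alpha)\le k$ puts every term within the range of Definition \ref{def 1}, and the exact formula follows.

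\textbf{Step 4: asymptotics.} By Corollary \ref{Cor A}, $\mathcal{M}_N(k;\alpha\cup\gamma)\sim\mathcal{M}(k;\alpha\cup\gamma)\,N^{k^2+|\alpha|+|\gamma|}$. Also $(N)_{\alpha-\beta}\sim N^{|\alpha|-|\beta|}$. So each term is $O(N^{k^2+2|\alpha|-|\beta|+|\gamma|})$. Since $\gamma\le\beta$, the exponent reaches $k^2+|2\!\cdot\!\alpha|$ only when $\gamma=\beta$, and there $L(\beta,\beta)=1$. Dividing by $N^{k^2+2|\alpha|}$ and letting $N\to\infty$ yields \eqref{1.1 asymp}.

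\textbf{Main obstacle.} The step I expect to need the most care is Step 2: getting the signs and the Lah-number coefficients right in the derivative formula for $f(1/s)$, together with the convention for zero parts ($L(0,0)=1$, with $L(b,0)=0$ for $b>0$). The rest is bookkeeping once the functional equation of Step 1 is in place.
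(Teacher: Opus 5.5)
Your proposal is correct and follows essentially the same route as the paper. The paper writes $\overline{\Lambda_U^{(n)}(1)}$ as $\Lambda_{U^\dagger}^{(n)}(1)$ and uses $\Lambda_{U^\dagger}(x)/\Lambda_{U^\dagger}(1)=x^N\Lambda_U(x^{-1})/\Lambda_U(1)$, which is your functional equation with the unimodular constant already cancelled. It then applies Leibniz and the Lah-number expansion of $(u^2\,\mathrm{d}/\mathrm{d}u)^j$, expands the product over the parts of $\alpha$, and discards the $\gamma\lneq\beta$ terms via the $N^{k^2+|\delta|}$ scaling from Corollary~\ref{Cor A}, exactly as you do. Your explicit treatment of the convention $L(0,0)=1$ and of why $\ell(\alpha)\le k$ keeps every term within Definition~\ref{def 1} is a small gain in rigour over the paper's write-up.
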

\begin{theorem}\label{Theorem 2}
    For a partition $\alpha$ of length $\le k$,
\begin{equation}
\begin{split}
    {|\mathcal{Z}|_N(k;2\!\cdot\!\alpha)}=(-1)^{|\alpha|}\sum_{\substack{\beta\leq 2\cdot\alpha\\ \gamma\leq \beta}}\left(\frac{-N}{2}\right)^{2|\alpha|-|\beta|}\binom{2\!\cdot\!\alpha}{\beta}{\beta\brace \gamma}\mathcal{M}_N(k;\gamma),
    \end{split}
\end{equation}    and 
\begin{equation}\label{asymp 1.2}
     {|\mathcal{Z}|(k;2\!\cdot\!\alpha)}=(-1)^{|\alpha|}\sum_{\substack{\beta\leq 2\cdot\alpha\\ }}\left(\frac{-1}{2}\right)^{2|\alpha|-|\beta|}\binom{2\!\cdot\!\alpha}{\beta}\mathcal{M}(k;\beta),
\end{equation}
where $\beta\brace \gamma$ are the \emph{(multi-index) Stirling numbers of the second kind}.
\end{theorem}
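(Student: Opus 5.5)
The plan is to express each ratio $\mathcal{Z}_U^{(q)}(0)/\mathcal{Z}_U(0)$ as an explicit linear combination of the ratios $\Lambda_U^{(m)}(1)/\Lambda_U(1)$. Then I will use that $\mathcal{Z}_U$ is real on $\mathbb{R}$ to remove the absolute values, and expand the resulting product.

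\textbf{Step 1: expressing the $\mathcal{Z}$-ratios through $\Lambda$-ratios.} From \eqref{Z-polynomial} we have $\mathcal{Z}_U(\theta)=c_U\,e^{-\mf{i}N\theta/2}\Lambda_U(e^{\mf{i}\theta})$, where $c_U$ is a unimodular constant independent of $\theta$. By Leibniz' rule,
\[
\mathcal{Z}_U^{(q)}(\theta)=c_U\sum_{j=0}^q\binom{q}{j}\Bigl(\frac{-\mf{i}N}{2}\Bigr)^{q-j}e^{-\mf{i}N\theta/2}\Bigl(\frac{d}{d\theta}\Bigr)^{j}\Lambda_U(e^{\mf{i}\theta}).
\]
Since $\frac{d}{d\theta}=\mf{i}\,z\frac{d}{dz}$ with $z=e^{\mf{i}\theta}$, we may use the classical identity
\[
\Bigl(z\frac{d}{dz}\Bigr)^j=\sum_{m}{j\brace m}z^m\frac{d^m}{dz^m}.
\]
Evaluating at $\theta=0$ (so $z=1$) and dividing by $\mathcal{Z}_U(0)$ then gives
\[
\frac{\mathcal{Z}_U^{(q)}(0)}{\mathcal{Z}_U(0)}=\mf{i}^q\sum_{j\le q}\binom{q}{j}\Bigl(\frac{-N}{2}\Bigr)^{q-j}\sum_{m\le j}{j\brace m}\frac{\Lambda_U^{(m)}(1)}{\Lambda_U(1)}.
\]

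\textbf{Step 2: removing the absolute values.} Because $\mathcal{Z}_U$ is real on $\mathbb{R}$, the left-hand side above is real. Hence its modulus squared equals its square, which is $(-1)^q$ times the square of the double sum. Taking the product over the parts of $\alpha$, each squared, turns the product of $|\cdot|^{2}$ factors into $(-1)^{|\alpha|}$ times a product of $2\ell(\alpha)$ such sums, one for each part $(2\!\cdot\!\alpha)_r$ of $2\!\cdot\!\alpha$. Expanding, we choose $\beta_r\le(2\!\cdot\!\alpha)_r$ and $\gamma_r\le\beta_r$ in each factor, which produces the coefficients $\binom{2\cdot\alpha}{\beta}$, $\left(-N/2\right)^{2|\alpha|-|\beta|}$ and ${\beta\brace\gamma}$ (multi-index products).

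\textbf{Step 3: identifying the averages.} Each resulting monomial is $\prod_r\Lambda_U^{(\gamma_r)}(1)/\Lambda_U(1)$, where parts $\gamma_r=0$ contribute $1$. We multiply by $|\mathcal{Z}_U(0)|^{2k}=|\Lambda_U(1)|^{2k}$ and integrate. By Definition \ref{def 1}, with $\gamma$ regarded as the partition of its nonzero entries, this integral is exactly $\mathcal{M}_N(k;\gamma)$. The admissibility condition $h\le 2k$ holds because $\ell(\gamma)\le 2\ell(\alpha)\le 2k$. Linearity of the integral then gives the exact formula.

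\textbf{Step 4: the asymptotic formula.} By Corollary \ref{Cor A}, each $\mathcal{M}_N(k;\gamma)$ is asymptotic to $N^{k^2+|\gamma|}\mathcal{M}(k;\gamma)$. So the $(\beta,\gamma)$ term has order $N^{k^2+2|\alpha|-|\beta|+|\gamma|}$. Since ${\beta\brace\gamma}=0$ unless $\gamma\le\beta$, the total order is at most $N^{k^2+2|\alpha|}$, with equality only when $\gamma=\beta$. In that case ${\beta\brace\beta}=1$. Dividing by $N^{k^2+2|\alpha|}$ and letting $N\to\infty$ yields \eqref{asymp 1.2}.

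I expect the main obstacle to be Step 2: the sign bookkeeping. In particular, I must check that the factor $\mf{i}^q$ squares to $(-1)^q$ uniformly, which relies on realness of the ratio rather than on taking conjugates. I must also check that the $-\mf{i}N/2$ factors combine with $\mf{i}^j$ to give real coefficients $(-N/2)^{q-j}$.
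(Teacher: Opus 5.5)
Your proposal is correct and follows essentially the same route as the paper. The paper first proves Proposition \ref{linear Z} by Leibniz's rule, the substitution $u=e^{\mf{i}\theta}$ and the Stirling-number expansion of $(u\,\mathrm{d}/\mathrm{d}u)^j$, then uses realness of $\mathcal{Z}_U$ to replace $|\cdot|^2$ by $(\cdot)^2$, expands over the parts of $2\!\cdot\!\alpha$, takes expectations, and drops the $\gamma\lneq\beta$ terms in the limit exactly as you do (your explicit product over $2\ell(\alpha)$ factors and the check $\ell(\gamma)\le 2k$ are, if anything, slightly more careful than the paper's write-up).
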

Therefore, we can express $|\Lambda|_N(k;2\!\cdot\!\alpha)$ and $|\mathcal{Z}|_N(k;2\!\cdot\!\alpha)$ (as well as their corresponding asymptotics), in terms of Kostka numbers or magic squares. For instance, by combining \eqref{1.1 asymp} and Corollary \ref{Cor B}, we obtain
\begin{align}\label{conj1 eq}
   |\Lambda|(k;2\!\cdot\!\alpha)=\alpha!&\sum_{\substack{\beta\leq \alpha\\ }}\left(-1\right)^{|\beta|}\frac{1}{(\alpha-\beta)!}\\
   &\nonumber\times\sum_{\substack{\mf{m}=(m_1,\ldots,m_k)\\ \sum_i^km_i=|\alpha|+|\beta|}}N_{\alpha\cup\beta,\mf{m}}\frac{\prod_{1\le i<j\le k}(m_i-m_j+j-i)}{\prod_{i=1}^k(m_i+2k-i)!},
\end{align}
whereas, combining \eqref{asymp 1.2} and Corollary \ref{Cor B} gives
\begin{align}\label{conj2 eq}
    {|\mathcal{Z}|(k;2\!\cdot\!\alpha)}= (-1)^{|\alpha|}(\alpha!)^2&\sum_{\substack{\beta\leq 2\cdot\alpha\\ }}\left(\frac{-1}{2}\right)^{2|\alpha|-|\beta|}\frac{1}{(2\!\cdot\!\alpha-\beta)!}\\
    &\nonumber\times\sum_{\substack{\mf{m}=(m_1,\ldots,m_k)\\ \sum_i^km_i=|\beta|}}N_{\beta,\mf{m}}\frac{\prod_{1\le i<j\le k}(m_i-m_j+j-i)}{\prod_{i=1}^k(m_i+2k-i)!}.
\end{align}
Following the random matrix theory framework developed in \cite{kn:keasna00a,kn:cfkrs,kn:ghk}, we formulate the following conjecture concerning moments of higher-order derivatives of the Riemann zeta function $\zeta(s)$ on the critical line $\mathfrak{R}(s)=1/2$ , together with the corresponding moments for Hardy’s \(Z\)-function, defined by
\begin{equation}
    Z(t)=\frac{\pi^{-\mf{i}t/2}\Gamma\left(1/4+\mf{i}t/2\right)}{\left|\Gamma\left(1/4+\mf{i}t/2\right)\right|}\zeta(1/2+\mf{i}t).
\end{equation}

\begin{conjecture}\label{zeta conjecture}
For $k,h,l\in\mb{N}_0$ with $h\leq k$ and $(h_i)_{i=1}^l$ a sequence of non-negative integers which satisfy $\sum_{i=1}^l h_i =h$, set $\alpha$ to be the partition given by $\alpha=\langle1^{h_1}2^{h_2}\ldots l^{h_l}\rangle$. Then,
    \begin{align*}
        \frac{1}{T}\int_{0}^T\left|\zeta(1/2+\mf{i}t
    )\right|^{2k-2h}\prod_{i=1}^{l}\left|{\zeta^{(i)}(1/2+\mf{i}t)}\right|^{2h_i}\mathrm{d}t\sim a_k \left|\Lambda\right|\!(k;2\!\cdot\!\alpha)\left(\log T\right)^{k^2+2|\alpha|},
    \end{align*}
    and 
    \begin{align*}
         \frac{1}{T}\int_{0}^T\left|Z(t
    )\right|^{2k-2h}\prod_{i=1}^{l}\left|{Z^{(i)}(t)}\right|^{2h_i}\mathrm{d}t\sim a_k \left|\mathcal{Z}\right|\!(k;2\!\cdot\!\alpha)\left(\log T\right)^{k^2+2|\alpha|},
    \end{align*}
    where $a_k$ is the arithmetic factor
    \[a_k=
\prod_p
\left(1-\frac{1}{p}\right)^{k^2}
\sum_{m=0}^{\infty}
\left(
\frac{\Gamma(m+k)}{m!\,\Gamma(k)}
\right)^2
p^{-m},\]
    and
    $\left|\Lambda\right|\!(k;2\!\cdot\!\alpha)$ and $\left|\mathcal{Z}\right|\!(k;2\!\cdot\!\alpha)$ are given by \eqref{conj1 eq} and \eqref{conj2 eq}, respectively.
\end{conjecture}
\begin{remark}
    In \cite{kn:keawei1}, Keating and Wei computed $|\Lambda|(k;2\!\cdot\!\alpha)$ and $|\mathcal{Z}|(k;2\!\cdot\!\alpha)$ when $\alpha$ is a partition of $k$ into at most two distinct parts. That is, $\alpha=\langle n_1^{m},n_2^{k-m}\rangle$ where $n_1,n_2\in\mb{N}_0$ and $0\le m\le k$. This led them to conjecture (see Conjecture 1.7 of \cite{kn:keawei1}) the leading order term of
    \[   \frac{1}{T}\int_{0}^T\left|\zeta^{(n_1)}(1/2+\mf{i}t
    )\right|^{2m}\left|{\zeta^{(n_2)}(1/2+\mf{i}t)}\right|^{2k-2m}\mathrm{d}t,\]
    as $T\to\infty$. They made an analogous conjecture for the corresponding moments of Hardy’s \(Z\)-function.
    
    In Subsection \ref{section compare}, we use Theorem \ref{Theorem B} to extend their expressions for $|\Lambda|(k;2\!\cdot\!\alpha)$ and $|\mathcal{Z}|(k;2\!\cdot\!\alpha)$ to a general partition $\alpha$ of length $\le k$. Therefore, Conjecture \ref{zeta conjecture} can be seen as a generalisation of Conjecture 1.7 of \cite{kn:keawei1}.
    \end{remark}
\subsection{Broader motivation}

One of the motivations for the joint moment problem with $|z|<1$ is that a sufficient understanding of these averages, especially in the case the exponents in \eqref{eq:1} are non-integer, would reveal the underlying distribution of the zeros of the derivatives of a characteristic polynomial drawn randomly from the unitary group \cite{farmer2025bimodaldistributionderivativeunitary}.

The distribution in the \emph{global regime}, where $z$ lies a macroscopic distance away from the unit circle, was discovered by Mezzadri \cite{kn:mezzadri03}. In \cite{kn:simmwei26}, Simm and Wei recovered Mezzadri's result by using the method of joint moments. The \emph{microscopic regime} is where $z$ lies on the order of the mean separation of eigenvalues, $1/N$, away from the unit circle. This distribution remains unknown, but has potential applications to number theory \cite{kn:consna06}, as well as random matrix theory. 

Computing the joint moments for the microscopic regime, when the exponents are non-integers, remains an open problem. However, in recent work, Simm and Wei \cite{kn:simmwei26}, have related the moments of the derivative of the characteristic polynomial in the microscopic regime (with even integer exponents) to the finite-temperature Bessel kernel. Furthermore, they provided a combinatorial interpretation of these moments, which we state in Theorem \ref{SW theorem}.

Whilst our approach currently restricts us to moments on the unit circle with even exponents, we hope to convey that these moments have a rich combinatorial structure, with connections to the $\tau$-functions of Painlev\'e, which are yet to be fully understood.
\subsection{History of the problem} 
The joint moment problem has been intensively studied for over two decades. Due to the several different methods one can use in computing the averages given in Definition \ref{def 1}, there is now an array of different answers to the same problem. It is an interesting and important exercise to explain the connection between these results. 

In this paper, we unify several pre-existing results for even moments when the characteristic polynomial is evaluated on the unit circle. This includes an irregular conformal block structure as predicted in \cite{kn:basor_et_al18} (see Subsection \ref{irreg conformal block section} for further details). We interpret our restricted investigation as a representation theoretic problem, as this shall explain the equivalence of Theorem \ref{Theorem A} and Theorem \ref{Theorem B}. In Subection \ref{unifying section}, we provide a detailed description of how these theorems logically imply many other results. Unfortunately, a unified approach to a more generalised joint moment problem (for instance, where one considers non-integer exponents) remains out of reach. This is principally due to the fact that there is no known expression for averages of characteristic polynomials with fractional powers in terms of Schur polynomials.

We shall now survey the literature, first highlighting the results that are more relevant to us. These will have an underlying combinatorial structure. Then, we will give a short review of what is known for the joint moment problem outside of our restricted regime, and which cannot currently be explained using our methods.
\begin{definition}
    Let $k$ and $h$ be non-negative integers with $h\leq 2k$. We define
    \begin{align*}
      |\mathcal{Z}|_N(k,h)&\coloneq|\mathcal{Z}|_N(k;\langle1^h\rangle),\\
      |\mathcal{Z}|(k,h)&\coloneq|\mathcal{Z}|(k;\langle1^h\rangle),
    \end{align*}
and define $|\Lambda|_N(k,h)$ and $\mathcal{M}_N(k,h)$ analogously.
\end{definition}
The joint moment problem was first considered in Hughes' PhD thesis \cite{kn:hug01}. Conrey, Rubinstein and Snaith \cite{kn:crs06} were the first to compute an explicit formula for the leading order term of \[\int_{\mathrm{U}(N)}\left|\mathcal{Z}_U'(0)\right|^{2k}dU,\]
as $N\to\infty$. Furthermore, they related the first order joint moments to a determinant of Bessel functions.
\begin{theorem}[Theorem 2 of \cite{kn:crs06}]\label{crs 1} For $k\in\mathbb{N}_0$,
    \begin{equation}\label{eq crs 1}
    |\mathcal{Z}|(k,2k)= (-1)^{k(k+1)/2} \left( \frac{\mathrm{d}}{\mathrm{d}x} \right)^{2k} \left( e^{-\frac{x}{2}} x^{-k^2/2} \det_{k \times k} (I_{i+j-1}(2\sqrt{x})) \right) \bigg|_{x=0},
\end{equation}
where $I_n(x)$ is the modified Bessel function of the first kind.
\end{theorem}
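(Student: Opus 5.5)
The plan is to obtain this as a special case of the asymptotic formula \eqref{asymp 1.2} in Theorem \ref{Theorem 2}, combined with Corollary \ref{Cor B}. Then I would check that the right-hand side of \eqref{eq crs 1}, expanded as a Taylor series at $x=0$, produces exactly the same finite sum.

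\textbf{Step 1: the random matrix side.} Take $\alpha=\langle 1^k\rangle$, so that $2\!\cdot\!\alpha=\langle1^{2k}\rangle$. The partitions $\beta\le 2\!\cdot\!\alpha$ are then $\beta=\langle1^j\rangle$ for $0\le j\le 2k$, and there are $\binom{2k}{j}$ of each as subsets. Formula \eqref{asymp 1.2} therefore reads
\[
|\mathcal{Z}|(k,2k)=(-1)^k\sum_{j=0}^{2k}\binom{2k}{j}\Bigl(-\tfrac12\Bigr)^{2k-j}\mathcal{M}(k;\langle1^j\rangle).
\]
For $\alpha=\langle1^j\rangle$ we have $\alpha!=1$, and $N_{\langle1^j\rangle,\mathfrak m}=j!/\mathfrak m!$, the multinomial coefficient. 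Corollary \ref{Cor B} then gives
\[
\mathcal{M}(k;\langle1^j\rangle)=j!\sum_{|\mathfrak m|=j}\frac{\prod_{i<l}(m_i-m_l+l-i)}{\prod_i m_i!\,(m_i+2k-i)!}.
\]

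\textbf{Step 2: the Bessel side.} Using $I_n(2\sqrt x)=x^{n/2}\sum_{m\ge0}x^m/(m!(m+n)!)$, the powers $x^{(i+j-1)/2}$ factor out of the Hankel determinant and give exactly $x^{k^2/2}$. Hence
\[
F(x):=x^{-k^2/2}\det_{k\times k}\bigl(I_{i+j-1}(2\sqrt x)\bigr)=\det_{k\times k}\Bigl(\sum_{m\ge0}\frac{x^m}{m!\,(m+i+j-1)!}\Bigr),
\]
which is a power series in $x$. By Leibniz's rule,
\[
\Bigl(\tfrac{d}{dx}\Bigr)^{2k}\bigl(e^{-x/2}F(x)\bigr)\Big|_{x=0}=\sum_j\binom{2k}{j}\Bigl(-\tfrac12\Bigr)^{2k-j}j!\,[x^j]F.
\]
So it suffices to prove, for every $j$, that
\[
(-1)^{k(k+1)/2}[x^j]F=(-1)^k\sum_{|\mathfrak m|=j}\frac{\prod_{i<l}(m_i-m_l+l-i)}{\prod_i m_i!(m_i+2k-i)!}.
\]

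\textbf{Step 3: the coefficient identity.} I would expand the determinant multilinearly in its rows, assigning a summation index $m_i$ to row $i$. This gives
\[
[x^j]F=\sum_{|\mathfrak m|=j}\frac{1}{\mathfrak m!}\det\Bigl(\frac{1}{(m_i+i+j-1)!}\Bigr)_{i,j}.
\]
The inner determinant is of the classical form $\det(1/(a_i+j)!)$. Pulling out $1/(a_i+k)!$ from each row leaves a matrix whose entries are polynomials in $a_i$ of degree $k-j$, which reduces to a Vandermonde determinant in $a_i=m_i+i-1$. After reversing the row order $i\mapsto k+1-i$ (which contributes a sign $(-1)^{k(k-1)/2}$) and reindexing, the factorials become $(m_i+2k-i)!$ and the Vandermonde becomes $\prod_{i<l}(m_i-m_l+l-i)$.

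\textbf{Main obstacle.} The step I expect to be hardest is the bookkeeping in Step 3. The row reversal also permutes the $m_i$, so I would use the symmetry of the summation over weak compositions to absorb that permutation. I would then have to confirm that all the signs, namely the $(-1)^k$ from Theorem \ref{Theorem 2}, the Vandermonde orientation and the reversal sign, combine to exactly $(-1)^{k(k+1)/2}$. I would also need to check that no extra constant (such as a leftover $|\Lambda|(k;\emptyset)$ normalisation) survives. To catch any mismatch early, I would test the cases $k=1$ and $k=2$ directly before writing out the general argument.
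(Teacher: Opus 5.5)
Your proposal is correct and follows the paper's route: the paper obtains the combinatorial form \eqref{eq crs 2} the same way, by specialising the asymptotic formula of Theorem \ref{Theorem 2} to $\alpha=\langle1^k\rangle$ and inserting Corollary \ref{Cor B} with $N_{\langle1^j\rangle,\mathfrak m}=j!/\mathfrak m!$. The only difference is that you carry out the Bessel-determinant expansion yourself, which the paper defers to \cite{kn:crs06}, and your signs close up: the row expansion gives $\prod_{i<l}(m_i-m_l+i-l)/\prod_i m_i!\,(m_i+k+i-1)!$, and the reindexing $i\mapsto k+1-i$ contributes $(-1)^{k(k-1)/2}$, which together with the $(-1)^k$ from Theorem \ref{Theorem 2} gives exactly $(-1)^{k(k+1)/2}$.
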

$I_n(x) $ is given by the series:
\[I_n(x)=\left(\frac{x}{2}\right)^n\sum_{j=0}^\infty\frac{x^{2j}}{2^{2j}(n+j)!j!}.\]
Using this expansion, they also provided a combinatorial expression (Theorem 3 of \cite{kn:crs06}),
\begin{equation}\label{eq crs 2}
    \begin{split}
|\mathcal{Z}|(k,2k)=&(-1)^k\sum_{\substack{\sum_{i=0}^{k}m_i=2k}}\binom{2k}{m_0,\ldots,m_{k}}\left(\frac{-1}{2}\right)^{m_0}\\
&\times \frac{\prod_{1\le i<j\le k}(m_i-m_j+j-i)}{\prod_{i=1}^k(m_i+2k-i)!}.
    \end{split}
\end{equation}
The authors' approach was to take the first order derivative of averages of characteristic polynomials with shifts. The expression for averages of characteristic polynomials that they use is in terms of a multidimensional contour integral.

In \cite{kn:dehaye10p}, Dehaye used a different method to prove an exact expression for $|\mathcal{Z}|_N(k,2h)$, where $h\le k$. This viewpoint is taken throughout this paper.
\begin{theorem}[Dehaye \cite{kn:dehaye10p}]\label{dehaye theorem} For $k,h\in\mathbb{N}_0$ where $h\le k$,
   \begin{align*}
       |\mathcal{Z}|_N(k,2h)=(-1)^h|\Lambda|_N(k,0)\sum_{r=0}^{2h}&\binom{2h}{r}\left(\frac{-N}{2}\right)^{2h-r}
       (-1)^rr!\sum_{\substack{\\ \mu\vdash_{k}r}}\frac{ [-N]_\mu[k]_\mu}{H^2_\mu [2k]_\mu}.
   \end{align*}
\end{theorem}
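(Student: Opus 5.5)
We derive Dehaye's formula as a special case of the paper's general results, Theorem \ref{Theorem 2} and Theorem \ref{Theorem A}, with $\alpha=\langle 1^h\rangle$. Then $2\!\cdot\!\alpha=\langle 1^{2h}\rangle$ and $|\mathcal{Z}|_N(k,2h)=|\mathcal{Z}|_N(k;\langle1^{2h}\rangle)$.

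\textbf{Step 1: specialise Theorem \ref{Theorem 2}.} The weak compositions $\beta\le 2\!\cdot\!\alpha$ are $0/1$ vectors of length $2h$. Grouping them by $r=|\beta|$, there are $\binom{2h}{r}$ choices, and each has $\binom{2\cdot\alpha}{\beta}=1$. For $\gamma\le\beta$ the Stirling factor ${\beta\brace\gamma}=\prod_i{\beta_i\brace\gamma_i}$ vanishes unless $\gamma=\beta$, since ${1\brace 0}=0$ and ${1\brace 1}={0\brace 0}=1$. As $\mathcal{M}_N$ depends only on the partition $\langle 1^r\rangle$, we obtain
\[
|\mathcal{Z}|_N(k,2h)=(-1)^h\sum_{r=0}^{2h}\binom{2h}{r}\left(\frac{-N}{2}\right)^{2h-r}\mathcal{M}_N(k;\langle1^r\rangle).
\]

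\textbf{Step 2: apply Theorem \ref{Theorem A}.} For $\alpha=\langle1^r\rangle$ we have $\alpha!=1$ in the convention of the paper, because $\alpha$ is read as the weak composition $(1,\dots,1)$ of length $r$ with $|\alpha|=r$; one should check this matches how $\alpha!$ is used there. The condition $r\le 2h\le 2k$ holds. Theorem \ref{Theorem A} then gives
\[
\mathcal{M}_N(k;\langle1^r\rangle)=(-1)^r|\Lambda|_N(k;\emptyset)\sum_{\mu\vdash_k r}K_{\mu\langle1^r\rangle}\frac{[-N]_\mu[k]_\mu}{H_\mu[2k]_\mu}.
\]

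\textbf{Step 3: evaluate the Kostka number.} $K_{\mu,\langle1^r\rangle}$ counts standard Young tableaux of shape $\mu$, so by the hook length formula $K_{\mu\langle1^r\rangle}=r!/H_\mu$. Substituting this turns $H_\mu$ into $H_\mu^2$ and yields exactly Dehaye's expression, using $|\Lambda|_N(k,0)=|\Lambda|_N(k;\emptyset)$.

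The only real bookkeeping is Step 1: making sure the multi-index Stirling numbers collapse, and that the ordering of $\beta$ as a weak composition does not affect $\mathcal{M}_N(k;\gamma)$, which depends only on the multiset of parts. The rest is a direct substitution.
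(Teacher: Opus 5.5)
Your proposal is correct and is essentially the paper's route: the paper derives Dehaye's formula from Theorem \ref{Theorem A} using $K_{\mu\langle1^r\rangle}=r!/H_\mu$, and the passage from $|\mathcal{Z}|_N(k,2h)$ to the averages $\mathcal{M}_N(k;\langle1^r\rangle)$ is exactly Theorem \ref{Theorem 2} at $\alpha=\langle1^h\rangle$, which you make explicit in Step 1. Your bookkeeping also checks out: the Stirling factors force $\gamma=\beta$, $\binom{2\cdot\alpha}{\beta}=1$, zero parts contribute trivially to $\mathcal{M}_N$, and $\alpha!=\prod_i\alpha_i!=1$, consistent with $\partial^\alpha m_\beta(\{0\}^{2k})=\alpha!\,\delta_{\beta\alpha}$ in the proof of Theorem \ref{Theorem A}.
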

In particular, he proved that the $2k$th moments of $|\Lambda_U(1)|$, i.e. $|\Lambda|_N(2k,0)$, are a polynomial factor of these joint moments.

As a corollary of Theorem \ref{dehaye theorem}, one easily obtains the corresponding asymptotic:
\begin{equation}\label{dehaye asymptotic}
    |\mathcal{Z}|(k,2h)=(-1)^h|\Lambda|(k,0)\sum_{r=0}^{2h}\binom{2h}{r}\left(\frac{-1}{2}\right)^{2h-r}r!\sum_{\substack{\\ \mu\vdash_{k}r}}\frac{[k]_\mu}{H^2_\mu [2k]_\mu}.
\end{equation}
Understanding the relation between Dehaye's result \eqref{dehaye asymptotic} and \eqref{eq crs 2} was the starting point for this paper. Observe that Theorem \ref{dehaye theorem} follows from Theorem \ref{Theorem A} by using the identity (see \cite{Macdonald})
\[K_{\mu\langle 1 ^r\rangle}=\frac{r!}{H_\mu}, \]
which can be derived directly from Definition \ref{def kostka} and the hook length formula.

We will now show that Theorem \ref{crs 1} (or more precisely \eqref{eq crs 2}) follows from Theorem B, hence motivating the need for both interpretations. It is useful to rewrite (\ref{eq crs 2}) as
\begin{equation}
\begin{split}
    |\mathcal{Z}|(k,2k)=&(-1)^k\sum_{r=0}^{2k}\binom{2k}{r}\left(\frac{-1}{2}\right)^{2k-r}\\
    &\times\sum_{\substack{\sum_{i=1}^{k}m_i=r}}\binom{r}{m_1,\ldots,m_{k}}
 \frac{\prod_{1\le i<j\le k}(m_i-m_j+j-i)}{\prod_{i=1}^k(m_i+2k-i)!}.
\end{split}
\end{equation}
This reformulation follows from making the substitution $m_0=2k-r$. It is a simple check from the definition that
\begin{equation}
    N_{\mf{m}\langle 1 ^r\rangle}=\binom{r}{m_1,\ldots,m_k},
\end{equation}
where $\mf{m}=(m_1,\ldots,m_k)$.

Therefore, the equivalence of (\ref{dehaye asymptotic}) and (\ref{eq crs 2}) follows from the relations
\begin{align*}
    {\mathcal{M}(
   k,r)}{}&=|\Lambda|(k,0)r!\sum_{\substack{\\ \mu\vdash_{k}r}}\frac{ [k]_\mu}{H^2_\mu [2k]_\mu}\\
   &=\sum_{\substack{\sum_{i=1}^{k}m_i=r}}\binom{r}{m_1,\ldots,m_{k}}
 \frac{\prod_{1\le i<j\le k}(m_i-m_j+j-i)}{\prod_{i=1}^k(m_i+2k-i)!},
\end{align*}
where the first equality follows from Corollary \ref{Cor A} and the second from Corollary \ref{Cor B}.\\

Recently, using a similar method to \cite{kn:crs06}, Keating and Wei \cite{kn:keawei1, kn:keawei2} have calculated formulas for $|\Lambda|(k;2\!\cdot\!\alpha)$ and $|\mathcal{Z}|(k;2\!\cdot\!\alpha)$ where $\alpha$ is a partition of $k$ into at most two distinct parts. 
 
  In \cite{kn:keawei1}, the authors observe that for higher order joint moments, two different expressions are needed. The first expression they derive is given in terms of an extensive sum of determinants of Bessel functions. The importance of these expressions is discussed in Subsection \ref{section compare}. Whereas, the second formula they derive is more computationally efficient (Theorem 3.7 of \cite{kn:keawei1}). These shall be reproven in Subsection \ref{section compare} and extended to an arbitrary partition of length $\le k$.

  Recently, \cite{kn:acrs} proved an exact formula for $|\Lambda|_N(k,2k)$, involving determinants of binomial coefficients (see our Theorem \ref{ACRS theorem1}).
  \subsubsection{Other regimes}\label{other regimes}
We will give a brief review of the progress on the joint moment problem for the regimes which lie outside the scope of this paper. In particular, these consider averages of the form
\[\int_{\mathrm{U}(N)}\left|\mathcal{Z}_U(\theta)\right|^{2k-\sum_{i=1}^l ih_i}\prod_{i=1}^l\left|\mathcal{Z}_U^{(i)}(\theta)\right|^{h_i}\mathrm{d}U,\]
where $k\notin\mb{Z}$ or $h_i/2\notin \mb{Z}$ or $\theta\in\mb{C}\setminus\mb{R}$. Alternatively, averages of $\Lambda_U(z)$ are considered.

For $\theta\in\mb{R}$, Winn \cite{kn:winn12} extended Dehaye's result, our Theorem \ref{dehaye theorem}, by computing $|\mathcal{Z}|_N(k,h)$ for $k\in\mb{Z}$ and $h$ an odd positive integer. For any real $2k>-1$ and $0<h<2k+1$ Assiotis,
Keating, and Warren \cite{kn:akw22} proved that the limit
\[\lim_{N\to\infty}\frac{1}{N^{k^2+h}}\int_{\mathrm{U}(N)}\left|\mathcal{Z}_U(0)\right|^{2k-h}\left|\mathcal{Z}_U'(0)\right|^{h}\mathrm{d}U,\] exists, and they gave a probabilistic interpretation of the limit in
terms of the expectation of a real-valued random variable with respect to the Hua–Pickrell measure. In \cite{kn:agkw}, Assiotis, Gunes, Keating and Wei extended this to higher order joint moments, by proving convergence of
\[\frac{1}{N^{k^2+\sum_{i=1}^lih_i}}\int_{\mathrm{U}(N)}\left|\mathcal{Z}_U(0)\right|^{2k-h}\prod_{i=1}^l\left|\mathcal{Z}_U^{(i)}(0)\right|^{h_i}\mathrm{d}U, \quad k,h_i\in\mb{R}_{>0}, \ h=\sum_{i=1}^lh_i\le 2k,\]
to the expectation of a certain random variable. 

Recently, for $2k>-1$ and $h\in\mb{C}$ with $0\le \mathfrak{R}(h)<2k+1$, Bothner and Wei \cite{bothnerwei} have related the limit
\[\lim_{N\to\infty}\frac{1}{N^{k^2+h}}\int_{\mathrm{U}(N)}\left|\mathcal{Z}_U(0)\right|^{2k-h}\left|\mathcal{Z}_U'(0)\right|^{h}\mathrm{d}U,\]
to a solution of a Painlev\'e III\'{}, with explicit form given in terms of a unique solution to a Riemann-Hilbert problem.

For a fixed $z$ with $|z|<1$ and any $k\in\mb{C}$ and $\mathfrak{R}(k)>-1$, Simm and Wei \cite{kn:simmwei26} have given an explicit expression for
\[\lim_{N\to\infty}\int_{\mathrm{U}(N)}\left|\Lambda_U'(z)\right|^{2k}\mathrm{d}U,\]
in terms of a confluent hypergeometric function of the first kind. Furthermore, they also proved the following asymptotic for the \emph{microscopic} regime. This is of particular interest due to its connections to the radial distribution of zeros of the derivative of the characteristic polynomial.
\begin{theorem}[Theorem 1.5 of \cite{kn:simmwei26}]\label{SW theorem}
  Let $|z_N|^2 = 1 - \frac{c}{N}$ for $c \in \mathbb{R}_{\ge0}$ fixed. In particular, the case $|z_N| = 1$ corresponding to $c = 0$ is allowed. Then, for any positive integer $k$, as $N \to \infty$, we have
\begin{equation}\label{simmwei}
\begin{split}\int_{\mathrm{U}(N)}\left|\Lambda_U'(z_N)\right|^{2k}\mathrm{d}U\sim & \ N^{k^2+2k} \frac{1}{\left(\prod_{i=1}^k(k-i)!\right)^2}\\
    &\times\sum_{\lambda,\mu\vdash k}\frac{f^\lambda f^\mu}{[k]_\lambda [k]_\mu} \det_{k\times k}\left(\int_0^1 x^{\lambda_i + \mu_j + 2k - i - j} e^{-cx} \, \mathrm{d}x\right),
    \end{split}
\end{equation}
where $f^\lambda$ denotes the number of standard Young tableaux of shape $\lambda$.
\end{theorem}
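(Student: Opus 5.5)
The plan is to reduce the average to a sum of products of Schur polynomials, differentiate term by term, and then pass to the limit $N\to\infty$ as a Riemann sum that collapses, via Andreief's identity, to the determinant in \eqref{simmwei}.

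\textbf{Exact Schur expansion.} Write $|\Lambda_U'(z)|^{2k}=\Lambda_U'(z)^k\,\overline{\Lambda_U'(z)}^{\,k}$, and realise $\Lambda_U'(z)^k$ as $\partial_{s_1}\cdots\partial_{s_k}\prod_{i=1}^k\Lambda_U(s_i)$ evaluated at $s_1=\cdots=s_k=z$; do the same for the conjugate with variables $t_j$. By the dual Cauchy identity,
\[
\prod_{i=1}^k\det(I-s_iU^\dagger)=\sum_{\lambda}(-1)^{|\lambda|}s_\lambda(s_1,\ldots,s_k)\,s_{\lambda'}(U^\dagger),
\]
where the sum runs over $\lambda$ with $\lambda_1\le N$ and $\ell(\lambda)\le k$. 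Schur orthogonality on $\mathrm{U}(N)$ then gives
\[
\mathbb{E}\Bigl[\prod_i\Lambda_U(s_i)\prod_j\overline{\Lambda_U(t_j)}\Bigr]=\sum_{\lambda\subseteq\langle N^k\rangle}s_\lambda(s)\,s_\lambda(\bar t).
\]
Next I expand each $s_\lambda(z\mathbf 1+u)$. By homogeneity this equals $z^{|\lambda|}s_\lambda(\mathbf 1+u/z)$, and we have $s_\lambda(\mathbf 1+x)=\sum_\mu d_{\lambda\mu}s_\mu(x)$ with
\[
d_{\lambda\mu}=\det_{k\times k}\binom{\lambda_i+k-i}{\mu_j+k-j}.
\]
The coefficient of $u_1\cdots u_k$ in $s_\mu(u)$ is $f^\mu$ when $\mu\vdash k$, and zero otherwise. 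This produces the exact formula
\[
\int_{\mathrm{U}(N)}|\Lambda_U'(z)|^{2k}\,\mathrm dU=\sum_{\lambda\subseteq\langle N^k\rangle}|z|^{2(|\lambda|-k)}\Bigl(\sum_{\mu\vdash k}f^\mu d_{\lambda\mu}\Bigr)\Bigl(\sum_{\nu\vdash k}f^\nu d_{\lambda\nu}\Bigr).
\]

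\textbf{Asymptotics.} Put $\ell_i=\lambda_i+k-i$, which are distinct integers in $[0,N+k-1]$, and scale $\ell_i=Nx_i$. Then
\[
d_{\lambda\mu}\sim\det\bigl(\ell_i^{\mu_j+k-j}\bigr)\Big/\prod_j(\mu_j+k-j)!,
\]
and $\prod_j(\mu_j+k-j)!=[k]_\mu\prod_j(k-j)!$, which accounts for the factors $1/[k]_\mu$ and $\prod(k-i)!$. The weight becomes $|z_N|^{2|\lambda|}=(1-c/N)^{\sum\ell_i+O(1)}\to e^{-c\sum x_i}$. The sum over $\lambda$ is a Riemann sum over ordered $\ell$, contributing $N^k\,\mathrm dx$ on $\{1>x_1>\cdots>x_k>0\}$. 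The two determinants contribute $N^{2k+k(k-1)}$ from the powers, so the total scaling is $N^{k^2+2k}$. The integrand is symmetric in the $x_i$, so the ordered region equals $1/k!$ times the cube $[0,1]^k$. Andreief's identity then gives
\[
\frac{1}{k!}\int_{[0,1]^k}\det\bigl(x_i^{\mu_j+k-j}\bigr)\det\bigl(x_i^{\nu_j+k-j}\bigr)\prod_i e^{-cx_i}\,\mathrm dx=\det\Bigl(\int_0^1x^{\mu_i+\nu_j+2k-i-j}e^{-cx}\,\mathrm dx\Bigr),
\]
which is exactly \eqref{simmwei}.

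\textbf{Main obstacle.} The hard step is justifying the Riemann-sum limit. The error in replacing $d_{\lambda\mu}$ by its leading monomial determinant must be uniform in $\lambda$, including near the boundary of the box and near the diagonals $\ell_i=\ell_j$, where the determinant vanishes. I would handle this by writing each binomial coefficient as a polynomial in $\ell_i$ and performing column operations, so that $d_{\lambda\mu}$ is exactly the leading monomial determinant plus lower-degree polynomial terms. The remainder is then $O(N^{-1})$ relative to the main term, uniformly in $\lambda$. Dominated convergence applies because $|z_N|\le1$, and the case $c=0$ is covered by the same argument.
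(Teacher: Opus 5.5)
The paper gives no proof of this theorem. It is quoted from Simm--Wei and used only for comparison, so there is no internal argument to match. Judged on its own, your argument is correct.

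The exact identity
\[
\int_{\mathrm{U}(N)}\left|\Lambda_U'(z)\right|^{2k}\mathrm{d}U=\sum_{\lambda\subseteq\langle N^k\rangle}|z|^{2(|\lambda|-k)}\Bigl(\sum_{\mu\vdash k}f^\mu d_{\lambda\mu}\Bigr)^2,\qquad d_{\lambda\mu}=\det_{k\times k}\binom{\lambda_i+k-i}{\mu_j+k-j},
\]
does follow from the dual Cauchy identity, Schur orthogonality and Lascoux's binomial determinant. For $k=1$ it reads $\sum_{n\le N}n^2|z|^{2(n-1)}$, as it should. The Riemann-sum/Andr\'eief step then gives the stated normalisation via $\prod_j(\mu_j+k-j)!=[k]_\mu\prod_j(k-j)!$. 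As a check, for $k=2$, $c=0$ your limit is $1/448+2/720+1/960=61/10080$. This is exactly what Theorem~\ref{Theorem 1} combined with Corollary~\ref{Cor B} gives for $|\Lambda|(2;2\!\cdot\!\langle1^2\rangle)$.

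Compared with the paper's own machinery on the unit circle (Lemma~\ref{BG lemma}, Proposition~\ref{prop modified moments}, Theorem~\ref{binom}), the difference is where orthogonality is used.
\begin{itemize}
\item The paper fuses all $2k$ factors into the single character $s_{\langle N^k\rangle}$ in $2k$ variables and Taylor-expands it about $\{1\}^{2k}$. This requires the evaluation point to be a multiple of $\{1\}^{2k}$, i.e.\ $|z|=1$. In exchange it yields exact formulas whose $N$-dependence is explicit, through $[-N]_\mu$ and $|\Lambda|_N(k;\emptyset)$.
\item You keep the holomorphic and antiholomorphic halves separate, so orthogonality gives a Parseval-type sum of squares over $\lambda\subseteq\langle N^k\rangle$ weighted by $|z|^{2|\lambda|}$. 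That is precisely what makes $c>0$ accessible.
\end{itemize}
Your coefficient is the same object as in the paper's binomial formula, $d_{\lambda\mu}=s_\lambda(\{1\}^k)\,s^*_\mu(\lambda_1,\ldots,\lambda_k)/[k]_\mu$, written as a binomial determinant in $k$ rather than $2k$ variables. The price is that $N$ now sits in the range of summation. You therefore obtain only the leading asymptotics, in the form of a determinant of moments of $e^{-cx}$ on $[0,1]$. At $c=0$ this does not by itself settle the direct comparison with Theorem~\ref{crs 1}, which the paper lists as open.

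Two small tidy-ups.
\begin{itemize}
\item For the conjugate half, differentiate holomorphically in $w_j$ in $\prod_j\det(I-w_jU)$ and set $w=\bar z$, since $\overline{\Lambda_U'(z)}=\frac{\mathrm d}{\mathrm dw}\det(I-wU)\big|_{w=\bar z}$.
\item For the error term, no column operations are needed. Expanding the determinant over permutations shows $d_{\lambda\mu}$ is a polynomial in $(\ell_1,\ldots,\ell_k)$ whose top homogeneous part is $\det(\ell_i^{\mu_j+k-j})/\prod_j(\mu_j+k-j)!$. The remainder is then $O(N^{k+\binom{k}{2}-1})$ uniformly for $0\le\ell_i\le N+k-1$. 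This is an absolute bound at the scale of the main term, not a relative one, since the main term vanishes on the diagonals. It is all you need: summed over the $O(N^k)$ admissible $\lambda$, the error contributes $O(N^{k^2+2k-1})$.
\end{itemize}
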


Theorem \ref{SW theorem} has a similar structure to Theorem \ref{dehaye theorem} after recalling that $f^\lambda=K_{\lambda\langle1^k\rangle}$. In order to relate this combinatorial expression to the material discussed in this paper, especially the irregular conformal block structure, it is an interesting open challenge to prove directly the equivalence of expressions for \eqref{eq crs 1} and \eqref{simmwei} when $c=0$.

Theorem \ref{SW theorem} has been generalised to higher order derivative joint moments in \cite{GMS26}, where they provided an expression also in terms of Kostka numbers.
\subsection{Unifying the literature on the problem}\label{unifying section}
This paper focuses on a restricted version of the joint moment problem \eqref{eq:1}, where all exponents are taken to be even integers and $z$ lies on the unit circle. However, many different formulas have been provided in this case.
  We will now remark on the connections between them (see Figure \ref{figure compare}), and they will be proven throughout the paper by analysing their underlying combinatorial structure. 
  \begin{remark}
   \begin{itemize}
       \item[{\textit{i})}] In Section \ref{section 3}, Theorem B will be proven from Theorem A as an application of the Jacobi-Trudi formula.
       \item[{\textit{ii})}] All the results in blue boxes in Figure \ref{figure compare} represent existing results and will be proved as a corollary of our main results except for Theorem 3.7 of \cite{kn:keawei1}, which expresses the joint moments as a sum of determinants of modified Bessel functions of the first kind. In Subsection \ref{section compare}, we will prove this result for some cases and with some more work, it should be possible to prove the general case.
      
   \end{itemize}   
  \end{remark}

\begin{figure}[h]
\centering
\begin{tikzpicture}[
  % Base box style
  box/.style = {
    draw,
    rectangle,
    rounded corners=3pt,
    align=center,
    text width=3.0cm,
    minimum height=1.1cm,
    font=\small,
    inner sep=6pt
  },
  % Blue-filled boxes (most nodes)
  bluebox/.style = {
    box,
    fill=boxblue,
    draw=blue!50!black
  },
  % Gray-filled box (the central sieve node, double border in original)
  graybox/.style = {
    box,
    fill=boxgray,
    draw=gray!30,
    double,           % double border like in the figure
    double distance=1.5pt
  },
  arr/.style = {
    -{Stealth[length=5pt]},
    thick
  }
]

% ── Nodes ───────────────────────────────────────────────────────────────
\node[graybox] (kostka)
  {Theorem \ref{Theorem A}};

\node[graybox, right=1.0cm of kostka] (magic)
  {Theorem \ref{Theorem B}};

\node[bluebox, below =1.5cm of kostka] (dehaye)
  {Theorem \ref{dehaye theorem}: exact expression for $|\Lambda|_N(k,2h),$ \cite{kn:dehaye10p}};
  
  \node[bluebox, below =1.5cm of magic] (Keating-Wei)
  {Theorem \ref{Keating-Wei generalisation}: higher-order asymptotics, \cite{kn:keawei1}};

\node[bluebox, below right=0.5cm and 0.1cm of magic] (ACRS)
  {Theorem \ref{ACRS theorem1}: exact expression for $|\Lambda|_N(k,2k)$, \cite{kn:acrs}};

\node[graybox, below =1cm of dehaye] (conformal)
  {Theorem \ref{conj proof}: proof of conformal block conjecture, \cite{kn:basor_et_al18}};
  
  \node[bluebox, below right=0.5cm and 0.5cm of Keating-Wei] (Bessel)
  {Theorem \ref{KW Theorem 4}: sum of determinant of Bessel functions \cite{kn:keawei2}};

\node[bluebox, below=1cm of Keating-Wei] (CRS)
  {Theorem \ref{crs 1}: Conrey et al. expression for $|\mathcal{Z}|(k,2k)$};

% ── Arrows ──────────────────────────────────────────────────────────────
\draw[arr] (kostka)     -- (dehaye);
\draw[arr] (kostka)     -- (magic);
\draw[arr] (magic)     -- (ACRS);
\draw[arr] (magic)     -- (Keating-Wei);
\draw[arr] (dehaye)    -- (conformal);
\draw[arr] (Keating-Wei) -- (Bessel);
\draw[arr] (Keating-Wei) -- (CRS);

\end{tikzpicture}

\caption{ \textit{Structure of paper
  (arrows signify logical implications).
  The purple boxes are results original to this paper.}}
  \label{figure compare}
\end{figure}

\subsection{The representation theory of the unitary group}\label{rep}

It is often helpful to think of partitions graphically. The \emph{Young diagram} of a partition $\lambda$ is defined as the set of points $(i,j)\in \mb{Z}^2$ such that $1\leq i\leq \lambda_j$. It is convenient to replace each point in the lattice $(i,j)$ by a square, which we call a \emph{box}. For each partition $\lambda$, its transpose partition $\lambda'$ is the unique partition obtained by reflecting the Young diagram of $\lambda $ along the main diagonal. See Figure \ref{DiagramFig} for an example of the Young diagram of $\lambda= (4,4,2,1)$ and its transpose.
\begin{figure}
\centering
\yng(4,4,2,1) \quad \quad\quad\yng(4,3,2,2)
\caption{the Young diagrams of $\lambda= (4,4,2,1)$ (left) and its transpose $\lambda'=(4,3,2,2)$ (right).}
\label{DiagramFig}
\end{figure}

\begin{definition}
    For each partition $\lambda$ and for each box $(i,j)\in \lambda$, we define its \emph{arm length} $a\!\left(i,j\right)$ as the number of
boxes strictly to the right of $(i,j)$, and its \emph{leg length} $g\!\left(i,j\right)$ as the number of boxes strictly
below $(i,j)$. The \emph{hook length} of $(i,j)$ is then defined as 
\begin{equation}
    h_\lambda {(i,j)}=a\!\left(i,j\right)+g\!\left(i,j\right)+1,
\end{equation}
    and the \emph{hook number} is defined as
    \begin{equation}
        H_\lambda=\prod_{(i,j)\in \lambda}h_\lambda{(i,j)}.
    \end{equation}
\end{definition}
Equivalently,  the hook-length $h_\lambda{(i,j)}$ is given by
 \begin{equation}
        h_\lambda {(i,j)}=1+(\lambda_i-i)+(\lambda_j'-j).
    \end{equation}
    Hence, we see that
\[H_\lambda=H_{\lambda'}.\]
Furthermore, it is a useful observation that for a partition $\lambda$, its generalised Pochammer symbols satisfy the following:
\[[x]_\lambda =\prod_{i=1}^{\ell(\lambda)}\prod_{j=1}^{\lambda_i}(x+j-i)= \prod_{(i,j)\in\lambda}(x+j-i)=(-1)^{|\lambda|}[x]_{\lambda'} \ ,\]
where the last equality follows from reflecting the boxes in the diagonal of the Young diagram.
\begin{definition}[Young Tableaux]
    Let $\lambda=(\lambda_1,\ldots,\lambda_{\ell(\lambda)})$ be a partition of $n$, with $\ell(\lambda)\le n$. A \emph{Young tableau} of $\lambda$ is a filling of the boxes of the Young diagram of $\lambda$ with entries taken from a set of numbers.
    
    A Young tableau is \emph{standard}  if the entries in each row and each column are strictly increasing. It is \emph{semistandard} if the entries weakly increase along each row and strictly increase down each column.
\end{definition}

We shall make use of the following classical theorem (see \cite{Bumpbook}).
\begin{theorem}[Hook length formula]\label{hook length formula}
 Let $\lambda$ be a partition of $n$. The number of standard Young tableaux of shape $\lambda$, denoted by $f^\lambda$, is given by
 \begin{equation}
     f^\lambda=\frac{n!}{H_\lambda}.
 \end{equation}

\end{theorem}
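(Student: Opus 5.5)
The approach I would take is the classical Frame--Robinson--Thrall argument, which goes through a probabilistic identity. For a standard Young tableau chosen uniformly at random, the probability of that tableau equals $1/f^\lambda$. The aim is a recursion of the form
\[
f^\lambda=\sum_{c}f^{\lambda\setminus c},
\]
where the sum runs over the corners $c$ of $\lambda$, i.e.\ the boxes whose removal leaves a partition. This recursion holds because the entry $n$ of any standard tableau must occupy a corner. Given the recursion, the formula follows by induction on $n=|\lambda|$, with the trivial base case $n=0$ (or $n=1$), provided that $F(\lambda):=n!/H_\lambda$ satisfies the same recursion, namely
\[
\sum_{c}\frac{F(\lambda\setminus c)}{F(\lambda)}=1.
\]

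To verify this identity, I would use the hook walk of Greene--Nijenhuis--Wilf:
\begin{enumerate}
\item Choose a box of $\lambda$ uniformly at random, with probability $1/n$.
\item Repeatedly jump to a uniformly random box in the hook of the current box, excluding the current box itself.
\item Stop when a corner is reached.
\end{enumerate}
This walk terminates at some corner with probability $1$. The main step, and the main obstacle, is showing that the walk ends at a given corner $c=(\alpha,\beta)$ with probability exactly $F(\lambda\setminus c)/F(\lambda)$. The ratio is computed directly: removing $c$ changes only the hook lengths of boxes in row $\alpha$ and column $\beta$, each of which decreases by one. Hence
\[
\frac{F(\lambda\setminus c)}{F(\lambda)}=\frac1n\prod_{i<\alpha}\Bigl(1+\frac{1}{h_{i\beta}-1}\Bigr)\prod_{j<\beta}\Bigl(1+\frac{1}{h_{\alpha j}-1}\Bigr).
\]

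To match the walk probability with this product, expand the product as a sum over subsets $A$ of rows above $\alpha$ and subsets $B$ of columns left of $\beta$. Then prove, by induction on $|A|+|B|$, that the probability that a walk started at $(\min A,\min B)$ has projected row set $A$ and column set $B$ and ends at $c$ is
\[
\prod_{i\in A\setminus\{\alpha\}}\frac{1}{h_{i\beta}-1}\;\prod_{j\in B\setminus\{\beta\}}\frac{1}{h_{\alpha j}-1}.
\]
The induction step conditions on the first move, which goes either down to the next row in $A$ or right to the next column in $B$. It uses the hook-length identity
\[
h_{ab}-1=(h_{a\beta}-1)+(h_{\alpha b}-1),
\]
valid for a box $(a,b)$ whose hook reaches the corner row and column. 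This identity holds because $c$ is a corner, and it makes the two branches recombine into the product form. Summing over $A$, $B$ and over the starting box recovers $F(\lambda\setminus c)/F(\lambda)$. Summing over corners then gives total probability $1$, which is the required recursion, and the induction closes to give $f^\lambda=n!/H_\lambda$.
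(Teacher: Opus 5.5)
The paper gives no proof of this statement. It quotes the hook length formula as a classical theorem, citing Bump's book, and uses it only as a black box: it turns $K_{\mu\langle 1^r\rangle}=f^\mu$ into $r!/H_\mu$, for instance when recovering Dehaye's formula from Theorem~\ref{Theorem A}.

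Your argument is correct. It is the Greene--Nijenhuis--Wilf hook walk; the attribution to Frame--Robinson--Thrall in your first sentence is off, and the remark about a uniformly random tableau is never used. The following steps all check out:
\begin{itemize}
\item the corner recursion;
\item the ratio $F(\lambda\setminus c)/F(\lambda)$;
\item the identity $h_{ab}-1=(h_{a\beta}-1)+(h_{\alpha b}-1)$ for $a\le\alpha$, $b\le\beta$, which is exactly where $\lambda_\alpha=\beta$ and $\lambda'_\beta=\alpha$ enter.
\end{itemize}
The boundary cases $a=\alpha$ or $b=\beta$ of your induction, where only one kind of move can lead to $c$, are covered automatically because $h_{\alpha\beta}-1=0$. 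You should read your sets as $A=A'\cup\{\alpha\}$ and $B=B'\cup\{\beta\}$, where $A'\subseteq\{1,\dots,\alpha-1\}$ and $B'\subseteq\{1,\dots,\beta-1\}$ come from expanding the product. Then each pair $(A,B)$ determines its unique starting box $(\min A,\min B)$.

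What your route buys is a fully elementary, self-contained proof. A proof closer to the paper's spirit can be read off from facts the paper already states. By Lemma~\ref{Kostkalemma} and \eqref{Hookeq}, $\sum_{\mu\vdash n}K_{\lambda\mu}\,m_\mu(\{1\}^M)=[M]_\lambda/H_\lambda$ for all $M\ge n$. Here $m_\mu(\{1\}^M)=(M)_{\ell(\mu)}/\prod_{i\ge1}m_i(\mu)!$ is a polynomial in $M$ of degree $\ell(\mu)\le n$, with equality only for $\mu=\langle 1^n\rangle$. Meanwhile $[M]_\lambda$ is monic of degree $n$. Comparing the coefficients of $M^n$ gives $K_{\lambda\langle 1^n\rangle}/n!=1/H_\lambda$. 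That argument takes two lines, but it only moves the combinatorial content into the hook-content formula \eqref{Hookeq}, equivalently the hook/shifted-Vandermonde identity linking it to \eqref{Weyl dimension}. The paper also merely quotes that formula, whereas your hook walk avoids the dependence entirely.
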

We will now define the Kostka numbers.
\begin{definition}[Kostka numbers]\label{def kostka}
Let $\alpha$ be a weak composition and $\lambda$ be a partition. A Young tableaux of shape $\lambda$ has \emph{weight} $\alpha$ if its Young diagram is filled with $\alpha_1$ lots of $1$, $\alpha_2$ lots of $2$ and so on.

    The \emph{Kostka numbers} $K_{\lambda\alpha}$ are the number of semistandard Young tableaux of shape $\lambda$ and weight $\alpha$.
\end{definition}
We make two observations, which will be helpful for our analysis. Firstly, it is clear to see that $K_{\lambda\alpha}=0$ unless $|\alpha|=|\lambda|$. Secondly, if $\lambda$ is a partition of $n$ and $\alpha=\langle1^n\rangle$, then the entries along in each row and column must be strictly increasing. Therefore, in this case a semistandard Young tableau of shape $\lambda$ and weight $\alpha$ is a standard Young tableau and we have
\begin{equation}
    K_{\lambda\langle1^n\rangle}=f^\lambda=\frac{n!}{H_\lambda}.
\end{equation}
\begin{definition}\label{def schur}
For a partition $\lambda$ of length $\le n$, we define its Schur polynomial as the sum over monomials,
\[s_\lambda(x_1,\ldots,x_n)=\sum_Tx^T=\sum_{T}x_1^{t_1}\cdots x_n^{t_n},\]
    where the summation is over all semistandard Young tableaux $T$ of shape $\lambda$ filled with the numbers $1, 2, \ldots, n$, and  $t_i$ counts the occurrences of the number $i$ in $T$.
\end{definition}
It is well known that Schur polynomials may be written as the ratio of two $n\times n$ determinants,
\begin{equation}\label{schur determinant}
s_\lambda(x_1, \ldots, x_n) = \frac{ \det \left( x_i^{\lambda_j+n-j} \right) }{ \det \left( x_i^{n-j} \right)}.\end{equation}
This is the bialternant Cauchy–Jacobi formula for the Schur polynomials \cite{Macdonald}.
Our interest in these functions lies in their well known connection to the unitary group. Given a partition $\lambda$ of length $\leq n$, one defines a map from $\mathrm{GL}(n)$ to $\mb{C}$ in the following way. We define
\[\chi_\lambda ^{}(g)\coloneq s_\lambda (g)\coloneq s_{\lambda}(y_1,\ldots,y_n),\] where the $y_i$s are the eigenvalues of $g\in \mathrm{GL}(n)$. The following theorem is due to Weyl \cite{Weyl}, see also \cite{Bumpbook}.
\begin{theorem}
    If $\lambda$ is a partition of length $\leq n$, the function $\chi_{\lambda}$ is the
character of an irreducible analytic representation
$\pi_{\lambda}=\pi^{(n)}_{\lambda}$ of $\mathrm{GL}(n)$.
It is irreducible; in fact, its restriction to $\mathrm{U}(N)$ is irreducible.
\end{theorem}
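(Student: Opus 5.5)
The statement is Weyl's classical theorem, so I would follow the standard route: construct a representation whose character is $s_\lambda$, then prove irreducibility over $\mathrm{U}(n)$ by computing the norm of its character with the Weyl integration formula.

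\emph{Step 1 (existence).} Let $V=\mathbb{C}^n$ and $m=|\lambda|$. Then $\mathrm{GL}(n)$ acts on $V^{\otimes m}$ diagonally and $S_m$ acts by permuting tensor factors; the two actions commute. Schur--Weyl duality gives the decomposition
\[V^{\otimes m}\cong\bigoplus_{\mu\vdash_n m}\pi_\mu\otimes S^\mu,\]
where $S^\mu$ is the Specht module and $\pi_\mu=\operatorname{Hom}_{S_m}(S^\mu,V^{\otimes m})$, equivalently the image of a Young symmetriser. Each $\pi_\mu$ is polynomial, hence analytic, in the entries of $g$.

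To identify its character, evaluate the trace of $g\cdot\sigma$ on $V^{\otimes m}$ for $\sigma$ of cycle type $\rho$. This trace is the power sum $p_\rho(y_1,\ldots,y_n)$ in the eigenvalues of $g$. Compare this with the Frobenius formula $p_\rho=\sum_\mu\chi^\mu(\rho)s_\mu$, which I would take from \cite{Macdonald}. Orthogonality of the characters $\chi^\mu$ of $S_m$ then shows that the character of $\pi_\mu$ is exactly $s_\mu$. Here $s_\mu$ is the tableau-defined polynomial of Definition~\ref{def schur}, and its equality with the bialternant \eqref{schur determinant} is quoted.

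\emph{Step 2 (irreducibility on $\mathrm{U}(n)$).} For a representation of a compact group, it suffices to show $\int_{\mathrm{U}(n)}|\chi_\lambda(U)|^2\,\mathrm{d}U=1$. The Weyl integration formula gives the density $\frac{1}{n!}\prod_{i<j}|e^{\mathfrak{i}\theta_i}-e^{\mathfrak{i}\theta_j}|^2$ on the torus. This density is exactly $|\det(x_i^{n-j})|^2/n!$, so it cancels the denominator of \eqref{schur determinant}, and we obtain
\[\frac{1}{n!}\int_{[0,2\pi]^n}\bigl|\det\bigl(e^{\mathfrak{i}\theta_i(\lambda_j+n-j)}\bigr)\bigr|^2\frac{\mathrm{d}\theta}{(2\pi)^n}.\]
Expand the determinant as a sum over permutations. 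The exponents $\lambda_j+n-j$ are strictly decreasing, hence distinct, so orthogonality of characters on the torus leaves only the $n!$ diagonal terms. Each contributes $1$, and the integral equals $1$.

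\emph{Step 3 (irreducibility on $\mathrm{GL}(n)$).} Any $\mathrm{GL}(n)$-invariant subspace is in particular $\mathrm{U}(n)$-invariant, so it is $0$ or everything by Step 2. In fact the converse also holds, because $\mathrm{U}(n)$ is Zariski dense in $\mathrm{GL}(n)$ and the representation is analytic, but the trivial direction is all we need.

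The main obstacle is Step 1, namely producing a representation whose character is exactly $s_\lambda$ rather than some combination of them. If quoting Frobenius' formula is to be avoided, I would instead use the highest-weight method. Write down the polynomial representation spanned by products of minors of $g$ with shape $\lambda$. Show that the character is $W$-invariant with a unique dominant monomial $x^\lambda$ appearing with multiplicity one. Then decompose using Step 2: the norm computation shows the character is a nonnegative integer combination of Schur functions, and it must be $s_\lambda$ itself once lower terms are ruled out by a dimension count.
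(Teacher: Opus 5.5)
Your proof is correct, but it takes a different route from the one behind the paper's citation. The paper does not prove this statement; it quotes it from Weyl and from Bump's book. You rightly read the misprinted $\mathrm{U}(N)$ as $\mathrm{U}(n)$.

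\textbf{The cited route.} The standard argument behind that citation is Weyl's, through the Weyl character formula for $\mathrm{U}(n)$. An irreducible representation with highest weight $\lambda$ is shown to exist abstractly (in Weyl's argument, from completeness of characters, i.e.\ Peter--Weyl). Its character on the torus is the quotient of alternants, which is $s_\lambda$ by the bialternant formula. The representation is then continued holomorphically to $\mathrm{GL}(n,\mathbb{C})$ by the unitarian trick.

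\textbf{Your route.} Yours is essentially Schur's algebraic route. You replace the abstract existence step with an explicit model: $\pi_\lambda$ is the $S^\lambda$-multiplicity space in $(\mathbb{C}^n)^{\otimes|\lambda|}$. Its character is fixed by the Frobenius formula and the linear independence of the irreducible characters of $S_m$. Only the isotypic decomposition under $S_m$ is used there, not the double-centralizer half of Schur--Weyl duality. Since Macdonald proves the Frobenius formula within symmetric functions, nothing circular enters. Your irreducibility step is the same norm computation $\int_{\mathrm{U}(n)}|\chi_\lambda(U)|^2\,\mathrm{d}U=1$ that drives Weyl's proof.

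\textbf{Comparison.} Your route gives a concrete polynomial representation, so analyticity is automatic. Irreducibility over $\mathrm{GL}(n)$ then follows from irreducibility over $\mathrm{U}(n)$ by the trivial inclusion, with no analytic continuation needed. The cost is importing the character theory of the symmetric groups. Also, your construction only produces polynomial representations. The highest-weight route treats all dominant weights (e.g.\ negative powers of $\det$) and also yields the classification of irreducible analytic representations.

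\textbf{The fallback paragraph.} It is not needed. If you did use it, you can drop the vague ``dimension count'', which would really require a straightening law for products of minors. Instead, take the irreducible constituent whose weights include $\lambda$. Its character is a symmetric polynomial with integer coefficients, hence $\sum_\mu c_\mu s_\mu$ with $c_\mu\in\mathbb{Z}$. Orthonormality of the $s_\mu$ on $\mathrm{U}(n)$, which is your Step 2 computation applied to two partitions, gives $\sum_\mu c_\mu^2=1$. So the character is a single $s_\mu$, with positive sign because weight multiplicities are nonnegative. Since $x^\lambda$ occurs in $s_\mu$, we get $\lambda\le\mu$ in dominance order. Since $x^\mu$ is a weight of the minor representation, we get $\mu\le\lambda$. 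Hence $\mu=\lambda$.
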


The characters of the classical compact Lie groups are important tools for studying averages of characteristic polynomials due to \emph{Schur orthogonality}. Let $\langle \ , \ \rangle$ be the inner product with respect to the Haar measure on $\mathrm{U}(N)$. Then, if $\chi_\lambda$ and $\chi_{\mu}$ are any two irreducible characters, we have
\begin{equation}
   \langle \chi_\lambda,\chi_\mu\rangle_ =\begin{cases}
        1 \quad \text{if }\lambda=\mu,\\
        0 \quad \text{otherwise}.
    \end{cases}
\end{equation}

The Weyl dimension formula \cite{Weyl} gives
\begin{equation}\label{Weyl dimension}
    s_\lambda(\{1\}^n)=\frac{\prod_{i<j}(\lambda_i-\lambda_j-i+j)}{\prod_{i}(n-i)!},
\end{equation}
where $\{1\}^n$ denotes $n$ copies of $1$. The hook length formula for the number of semistandard Young tableaux gives an alternative formulation,
\begin{equation}\label{Hookeq}
    s_\lambda(\{1\}^n)=\frac{[n]_\lambda}{H_\lambda}.
\end{equation}

\subsubsection{The ring of symmetric functions}

For each $n\geq 1$, the symmetric group $S_n$ acts on the ring $\mb{Z}[x_1,\ldots,x_n]$ by permuting the variables. A \emph{symmetric function} is a polynomial, which is invariant under this action, and they form a subring.

    Let $\lambda$ be a partition of length $\leq n$. The \emph{monomial symmetric polynomial} is defined as 
    \[m_\lambda (x_1,\ldots,x_n)=\sum_{\beta \sim \lambda}x_1^{\beta_1}\cdots x_n^{\beta_n},\]
    where $\beta \sim \lambda$ if $\beta$ is a distinct permutation of the parts of $\lambda$.

    The $k$th \emph{complete homogeneous symmetric polynomial}
in $n $ variables, for a given $k\in\mb{N}$, is defined as
\begin{align*}
  h_k(x_1,\ldots,x_n)=\sum_{1\leq i_1\le i_2\le \cdots\le i_k\leq n}x_{i_1}x_{i_2}\cdots x_{i_k}.
\end{align*}
Now given a partition $\lambda=(\lambda_1,\ldots,\lambda_n)$, we define
\[h_\lambda(x_1,\ldots,x_n)=\prod_{i=1}^nh_{\lambda_i}(x_1,\ldots,x_n).\]

Schur polynomials, monomials and $h_\lambda$ are all bases of the ring of symmetric polynomials. Therefore, we may transition between $s_\lambda$ or $h_\lambda$ into the monomials. The transition matrices are given by Kostka numbers and $N_{\lambda \mu}$ respectively.
\begin{lemma}\label{Kostkalemma}
    Given a partition $\lambda$ of length $\leq n$, we have \begin{align*}&s_\lambda(x_1,\ldots,x_n)=\sum_{\mu\vdash|\lambda|}K_{\lambda \mu}m_\mu(x_1,\ldots,x_n),\\
    &h_\lambda(x_1,\ldots,x_n)=\sum_{\mu\vdash|\lambda|}N_{\lambda \mu}m_\mu(x_1,\ldots,x_n),\end{align*}
    where the sum is over partitions $\mu.$
\end{lemma}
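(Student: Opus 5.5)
Both identities come from expanding the left-hand side in the monomial basis $m_\mu$ and reading off the coefficients. For the first identity I would use the combinatorial definition of the Schur polynomial in Definition \ref{def schur}. Write $s_\lambda(x_1,\ldots,x_n)=\sum_T x^T$ and group the semistandard tableaux $T$ according to their weight $\beta=(t_1,\ldots,t_n)$, a weak composition of $|\lambda|$ with at most $n$ parts. The coefficient of $x^\beta$ is then the number of semistandard tableaux of shape $\lambda$ and weight $\beta$, which is $K_{\lambda\beta}$ by Definition \ref{def kostka}.

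The one real step is to show that $K_{\lambda\beta}$ is invariant under permuting the entries of $\beta$. I would get this directly from the fact that $s_\lambda$ is symmetric, which follows from the bialternant formula \eqref{schur determinant}: the numerator and denominator are both alternating, so their ratio is symmetric. Symmetry of the polynomial forces the coefficient of $x^\beta$ to equal the coefficient of $x^{\sigma\beta}$ for every permutation $\sigma$. Hence $K_{\lambda\beta}=K_{\lambda\mu}$, where $\mu$ is the partition obtained by sorting $\beta$. Collecting the monomials $x^\beta$ with $\beta\sim\mu$ then gives $K_{\lambda\mu}m_\mu$. If one prefers not to rely on the determinant formula, the Bender--Knuth involution gives the same symmetry combinatorially by swapping adjacent weights $i,i+1$; this symmetry is the only nontrivial point. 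The sum naturally runs over $\mu\vdash|\lambda|$ with $\ell(\mu)\le n$. For $\ell(\mu)>n$ we have $m_\mu=0$ in $n$ variables, so the sum can equally be written over all $\mu\vdash|\lambda|$.

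For the second identity I would expand each factor as $h_{\lambda_i}=\sum_{\mathrm{wt}(r)=\lambda_i}x^{r}$, summing over weak compositions $r=(r_1,\ldots,r_n)$ with $\sum_j r_j=\lambda_i$. Multiplying over $i$, a term of the product is specified by a nonnegative integer matrix $A=(a_{ij})$ with $\operatorname{row}(A)=\lambda$, and that term contributes the monomial $x^{\operatorname{col}(A)}$. So the coefficient of $x^\beta$ equals $N_{\lambda,\beta}$. Permuting the columns of such matrices is a bijection, so $N_{\lambda,\beta}=N_{\lambda,\mu}$ whenever $\beta\sim\mu$. Grouping by $\mu$ as before yields $\sum_\mu N_{\lambda\mu}m_\mu$. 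In the paper's notation $N_{\lambda\mu}$ means $N_{\lambda,\mu}$.
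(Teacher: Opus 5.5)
Your proof is correct and is the standard textbook argument. The paper gives no proof of this lemma; it quotes it as the known description of the transition matrices from $s_\lambda$ and $h_\lambda$ to the monomial basis (as in \cite{Macdonald}). Your two steps, reading coefficients off the tableau definition and encoding the terms of $\prod_i h_{\lambda_i}$ as nonnegative integer matrices with prescribed row and column sums, are exactly how that fact is established. You supply the one nontrivial input, invariance of $K_{\lambda\beta}$ under permuting $\beta$, correctly: either via the bialternant formula, which the paper also takes as known, or via Bender--Knuth. Your convention $m_\mu=0$ for $\ell(\mu)>n$ correctly reconciles this with the sum over all $\mu\vdash|\lambda|$.
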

We also need the well-known Jacobi-Trudi formula.
\begin{theorem}[Jacobi-Trudi identity]\label{JacobiTrudi}
    Let $\lambda$ be a partition of length $\leq n$. Then
    \[s_\lambda(x_1,\ldots,x_n)=\det_{\ell(\lambda)\times \ell(\lambda)}\left(h_{\lambda_i-i+j}(x_1,\ldots ,x_n)\right).\]
\end{theorem}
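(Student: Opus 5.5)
One standard route is via the generating function for the complete homogeneous polynomials together with an alternant (bialternant) computation. First fix a partition $\lambda$ with $\ell(\lambda)\le n$. Set
\[
E(t)=\prod_{k=1}^n (1-x_k t)^{-1}=\sum_{r\ge0} h_r t^r,
\]
with the convention $h_r=0$ for $r<0$. Write $m=\ell(\lambda)$. It is enough to prove the identity with an $n\times n$ determinant $\det(h_{\lambda_i-i+j})_{i,j\le n}$. The reason is that for $i>m$ we have $\lambda_i=0$, so the entries are $h_{j-i}$; these vanish below the diagonal and equal $h_0=1$ on the diagonal. Hence the lower-right block is unitriangular, the lower-left block is zero, and the determinant reduces to the $m\times m$ one.

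Next, let $e_r$ denote the elementary symmetric polynomials in the subsets $x^{(k)}=(x_1,\dots,\widehat{x_k},\dots,x_n)$. From
\[
\prod_{l\ne k}(1-x_l t)\cdot E(t)=(1-x_kt)^{-1}
\]
we get, for each $k$ and each integer $a$,
\[
\sum_{j=1}^n h_{a-n+j}\,(-1)^{n-j}e^{(k)}_{n-j}=x_k^{a}.
\]
In matrix form this reads $H_\alpha\cdot M = A_\alpha$. Here $\alpha_i=\lambda_i+n-i$, $H_\alpha=(h_{\alpha_i-n+j})_{i,j}$, $M=\bigl((-1)^{n-j}e^{(k)}_{n-j}\bigr)_{j,k}$, and $A_\alpha=(x_k^{\alpha_i})_{i,k}$ is the alternant. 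Note that $H_\alpha$ is exactly the Jacobi–Trudi matrix, since $\alpha_i-n+j=\lambda_i-i+j$.

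Now apply this with $\lambda=\emptyset$. Then $\alpha=\delta=(n-1,\dots,0)$ and $H_\delta=(h_{j-i})$, which is unitriangular, so $\det H_\delta=1$. This gives $\det M=\det A_\delta$, the Vandermonde determinant, which is nonzero as a polynomial. Taking determinants in the general case then yields
\[
\det(h_{\lambda_i-i+j})=\frac{\det(x_k^{\lambda_i+n-i})}{\det(x_k^{n-i})}=s_\lambda(x_1,\dots,x_n),
\]
by the bialternant formula \eqref{schur determinant}; transposing the two alternants does not affect either determinant.

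The main point needing care is the coefficient identity in the second step. We extract the coefficient of $t^{a}$, for $a\ge n-1$, on both sides of $\prod_{l\ne k}(1-x_lt)E(t)=\sum_r x_k^r t^r$. Since $\prod_{l\ne k}(1-x_lt)$ has degree $n-1$, this gives $\sum_{s=0}^{n-1}(-1)^s e^{(k)}_s h_{a-s}=x_k^{a}$. Reindexing with $s=n-j$ produces the displayed form. All exponents $\alpha_i$ are at least $0$, and for small $a$ any terms with $h$ of negative index vanish, so the identity remains valid there as well.
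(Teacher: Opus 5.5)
Your argument is correct. It is the standard derivation of Jacobi--Trudi from the bialternant formula, as in \cite{Macdonald}, Ch.~I, \S3.

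\begin{itemize}
\item The factorisation $H_\alpha M=A_\alpha$ is correct.
\item The normalisation $\det M=\det A_\delta$, obtained from the case $\lambda=\emptyset$, is correct.
\item The block-triangular reduction from the $n\times n$ determinant to the $\ell(\lambda)\times\ell(\lambda)$ one is correct.
\end{itemize}

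The paper itself gives no proof of this identity. It quotes Jacobi--Trudi as well known, just as it quotes the bialternant formula \eqref{schur determinant}. Your proof therefore shows that, within the paper's framework, Jacobi--Trudi needs nothing beyond \eqref{schur determinant}, which the paper uses anyway (for instance in Lemma \ref{contour JC}).

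If you wanted a proof starting from the paper's actual definition of $s_\lambda$ as a sum over semistandard tableaux (Definition \ref{def schur}), the natural alternative is the Lindstr\"om--Gessel--Viennot lattice-path argument:
\begin{itemize}
\item each row of a tableau becomes a lattice path whose horizontal steps record its entries;
\item the weighted path counts between the relevant endpoints are exactly the $h_{\lambda_i-i+j}$;
\item column-strictness becomes non-intersection of the paths.
\end{itemize}
That route bypasses the bialternant formula entirely, at the cost of a sign-reversing involution on intersecting path families.

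One small slip: the coefficient identity $\sum_{j}h_{a-n+j}(-1)^{n-j}e^{(k)}_{n-j}=x_k^{a}$ holds for all integers $a\ge 0$, not for every integer $a$. For $a<0$ the left side vanishes while $x_k^a\neq 0$. Since every $\alpha_i=\lambda_i+n-i\ge 0$, and your final paragraph effectively restricts to this range, nothing is affected.
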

\subsection{The connection to irregular conformal blocks}\label{irreg conformal block section}
This paper gives a comprehensive account of the problem in our restricted regime. However, many of the results that go beyond this restriction use the theory of integrable systems. More precisely, they relate the problem to the theory of Painlev\'e transcendents. 

In \cite{kn:basor_et_al18}, Basor et al. conjectured a combinatorial expression for $|\mathcal{Z}|(k,2h)$ when $k\in\mb{N}_0$, based on its connection to Painlev\'e III' (see Conjecture 2 of \cite{kn:basor_et_al18}, or our Conjecture \ref{conj2} below). Their conjecture drew from a deep relationship between Painlev\'e III' and irregular conformal blocks, which for our purposes are explicit combinatorial series expansions (see \eqref{irreg block}).

 In this subsection, we will give a brief overview of the relation between joint moments and Painlev\'e III', and the conjectured connection between the latter and irregular conformal blocks. Moreover, we will prove that the combinatorial expression conjectured by Basor et al. is a logical implication of a result of Dehaye (our Theorem \ref{dehaye theorem}). In forthcoming work \cite{snape26}, we will prove a full series expansion for $\tau_k(t)$ for $k\in\mb{N}_0$, in terms of irregular conformal blocks. Furthermore, we will prove an analogous expansion for the finite $N$ joint moments, which will prove a modification of Conjecture 1 of \cite{kn:basor_et_al18}.\\

In \cite{kn:forwit06}, Forrester and Witte provided an expression for $|\Lambda|(k,2h)$ and $|\mathcal{Z}|(k,2h)$ in terms of solutions of $\sigma$-Painlev\'e III' equation, when $h=k\in\mb{N}_0$. This result has been generalised by Basor et al. \cite{kn:basor_et_al18} to $h\le k$, where $h$ and $k$ are non-negative integers, and then further extended by Assiotis et al. \cite{kn:abgs} to $h\in\mb{N}_0$ and any $k>h-1/2$. In particular, they proved the following.
\begin{theorem}[\cite{kn:abgs}]\label{assiotis theorem}
    For $h\in\mb{N}_0$ and for any $k>h-1/2$,
    \begin{equation}
        |\mathcal{Z}|(k,2h)=(-1)^h|\mathcal{Z}|(k,0)\frac{\mathrm{d}^{2h}}{\mathrm{d}t^{2h}}\tau_{k}(t)\big|_{t=0},
    \end{equation}
    where \begin{equation}\label{tau1 eq}\tau_k(t):=\exp\left(\int_{0}^t\frac{\sigma_k(u)}{u}\mathrm{d}u\right),\end{equation}
    and $\sigma_k$ satisfies the $\sigma$-Painlev\'e III' equation
     \begin{equation}\label{1.20}
         \left(t\frac{\mathrm{d}^2\sigma_k}{\mathrm{d}t^2}\right)^2=\left(\left(4\frac{\mathrm{d}\sigma_k}{\mathrm{d}t}\right)^2-1\right)\left(\sigma_k+k^2-t\frac{\mathrm{d}\sigma_k}{\mathrm{d}t}\right)+k^2.
     \end{equation}
\end{theorem}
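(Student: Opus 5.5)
The plan is to reduce the joint moment to a moment of a single linear statistic of a Cauchy (Hua--Pickrell) ensemble. The characteristic function of that statistic is then identified with a Painlev\'e $\tau$-function.

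\textbf{Step 1 (reduction to a linear statistic).} For $\theta\in\mb{R}$, differentiating \eqref{Z-polynomial} logarithmically gives
\[\frac{\mathcal{Z}_U'(0)}{\mathcal{Z}_U(0)}=-\frac12\sum_{n=1}^N\cot(\theta_n/2),\]
which is real. Hence
\[|\mathcal{Z}|_N(k,2h)=|\mathcal{Z}|_N(k,0)\,\mb{E}^{(k)}_N\Bigl[\bigl(\tfrac12\textstyle\sum_n x_n\bigr)^{2h}\Bigr],\qquad x_n=\cot(\theta_n/2).\]
Here $\mb{E}^{(k)}_N$ denotes expectation under the Haar measure tilted by $|\mathcal{Z}_U(0)|^{2k}$, normalised by $|\mathcal{Z}|_N(k,0)$. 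Under the Cayley transform, this tilted circular Jacobi ensemble becomes the Hua--Pickrell (Cauchy) ensemble on $\mb{R}^N$ with weight $(1+x^2)^{-k-N}$.

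The $2h$-th moment is finite exactly when $2h<2k+1$, which is the hypothesis $k>h-1/2$. Dividing by $N^{2h}$ and letting $N\to\infty$, I will use the known convergence of the point process $\{x_n/N\}$ to the Hua--Pickrell point process. I will also use convergence of the centred sum $N^{-1}\sum_n x_n$, taken as a principal value limit, to a random variable $\mathsf{T}$.

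\textbf{Step 2 (uniform integrability).} I will show that $\sup_N \mb{E}^{(k)}_N\bigl[|N^{-1}\sum_n x_n|^{2h+\varepsilon}\bigr]<\infty$ for some $\varepsilon>0$. This holds because $2h+\varepsilon<2k+1$, using the explicit one- and two-point correlation bounds of the Cauchy ensemble. It follows that
\[|\mathcal{Z}|(k,2h)=|\mathcal{Z}|(k,0)\,2^{-2h}\,\mb{E}[\mathsf{T}^{2h}].\]

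\textbf{Step 3 (Painlev\'e characterisation).} Write $\phi_k(t)=\mb{E}[e^{\mf{i}\sqrt{t}\,\mathsf{T}}]$, suitably symmetrised. Because $\mathsf{T}$ is symmetric, only even powers survive, and they appear with alternating signs. This produces both the factor $(-1)^h$ and the passage from $\mb{E}[\mathsf{T}^{2h}]$ to $\frac{\mathrm{d}^{2h}}{\mathrm{d}t^{2h}}$ at $t=0$, after matching the normalisation of $t$ against the $2^{-2h}$.

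At finite $N$, the characteristic function $\mb{E}^{(k)}_N[e^{\mf{i}s\sum_n x_n}]$ is a Hankel determinant for a Cauchy weight with an exponential (Fisher--Hartwig type) perturbation. By the Forrester--Witte theory, its logarithmic derivative satisfies a $\sigma$-Painlev\'e V equation. Rescaling $s=t/N$ and letting $N\to\infty$, this equation degenerates to \eqref{1.20}. The limiting function is then $\tau_k$ in the form \eqref{tau1 eq}, with the solution fixed by its small-$t$ behaviour, namely $\sigma_k(t)=O(t)$.

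\textbf{Main obstacle.} The hardest step is Step 3. It requires justifying that the Painlev\'e V equations converge to the Painlev\'e III$'$ equation, and that the limiting $\sigma$ is the particular solution regular at $0$ with the correct Taylor data. Only the first $2h$ derivatives are needed, but for non-integer $k$ the function $\tau_k$ is generally not analytic beyond order about $2k$.

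My first attempt will be to bypass ODE convergence entirely. I would verify directly that the moments $\mb{E}[\mathsf{T}^{2j}]$ for $j\le h$, computed as limits of finite-$N$ Selberg-type integrals, satisfy the recursion for the Taylor coefficients obtained by substituting a power series into \eqref{1.20}. That recursion determines those coefficients uniquely once $\sigma_k(0)=0$ is imposed.
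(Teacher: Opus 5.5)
The paper gives no proof of this statement. It is quoted from \cite{kn:abgs}, so your outline has to stand on its own. It follows the same general shape as that work: reduction to the Hua--Pickrell variable, a finite-$N$ $\sigma$-Painlev\'e V equation, and a scaling limit.

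Step 1 is correct. Step 2 reaches a true conclusion, but the argument you give does not work. One- and two-point correlation bounds control only the second moment of $\sum_n x_n$. You need a uniform bound on moments of order $2h+\varepsilon$, which can be close to $2k+1$. One fix is to realise the $x_n$ as eigenvalues of an $N\times N$ Hua--Pickrell matrix $H$ with density proportional to $\det(1+H^2)^{-k-N}$. By corner consistency, each diagonal entry has density proportional to $(1+x^2)^{-k-1}$. Jensen's inequality then gives $\mathbb{E}|N^{-1}\mathrm{Tr}\,H|^p\le\mathbb{E}|H_{11}|^p<\infty$, uniformly in $N$, for $1\le p<2k+1$.

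The normalisation in Step 3 is also wrong. With $\mathbb{E}[e^{\mathfrak{i}\sqrt{t}\,\mathsf{T}}]$, the $2h$-th $t$-derivative at $0$ picks out $\mathbb{E}[\mathsf{T}^{4h}]$, not $\mathbb{E}[\mathsf{T}^{2h}]$. The function matching the statement is $\tau_k(t)=\mathbb{E}[e^{\mathfrak{i}t\mathsf{T}/2}]$ for $t\ge0$, which satisfies $(-1)^h\tau_k^{(2h)}(0)=2^{-2h}\mathbb{E}[\mathsf{T}^{2h}]$.

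The genuine gap is Step 3. It is the entire content of the theorem, and you leave it as the ``main obstacle''. Both of your ways of selecting the solution fail:
\begin{itemize}
\item $\sigma\equiv0$ solves \eqref{1.20}, since both sides vanish.
\item Inserting $\sigma=at^2+o(t^2)$ gives a quadratic in $a$ that has the root $a=0$.
\end{itemize}
So neither $\sigma_k=O(t)$ nor $\sigma_k(0)=0$ selects a solution. You must also impose $a=\tau_k''(0)=-\tfrac14\mathbb{E}[\mathsf{T}^2]\neq0$. Even then, the linearised recursion degenerates at order $t^{2k+1}$, where a free parameter enters. That is harmless for orders $\le 2h<2k+1$, but it is why the boundary data at $0$ need care.

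More seriously, the fallback only checks that certain numbers satisfy a formal recursion. It needs closed forms for $\mathbb{E}[\mathsf{T}^{2j}]$ at non-integer $k$, which you do not have; the Schur-function formulas of this paper require $k\in\mathbb{N}$. Even with those forms, it produces no actual solution of the singular ODE whose $\tau$-function has these derivatives. For that you are back to proving that $t\frac{\mathrm{d}}{\mathrm{d}t}\log\mathbb{E}[e^{\mathfrak{i}t\mathsf{T}/2}]$ itself solves \eqref{1.20}. This needs locally uniform convergence of the rescaled finite-$N$ $\sigma$-functions together with two derivatives, so derivatives of the characteristic function up to order three. Uniform moment bounds give that only when $k>1$.

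Had you carried out the $t^2$ balance, you would also have found that \eqref{1.20} as printed cannot be the target. It forces $a\in\{0,\frac{1}{4(1-16k^2)}\}$. But $\int|\mathcal{Z}_U'(0)|^2\mathrm{d}U=\frac{N(N+1)(N+2)}{12}$ gives $|\mathcal{Z}|(1,2)=\frac1{12}$, i.e.\ $a=-\frac1{12}$ at $k=1$, which is not a root. The factor should read $4(\sigma_k')^2-1$, which gives $a=\frac{1}{4(1-4k^2)}$.
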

Therefore, Theorem \ref{assiotis theorem} proves that for $h\in\mb{N}_0$, $|\mathcal{Z}|(k,2h)$ coincides with the coefficients in the series expansion of $\tau_k(t)$. Furthermore, $\tau_k(t)$, as defined in \eqref{tau1 eq}, is a distinguished function as it is precisely a $\tau$-function of Painlev\'e III', as defined in \cite{Gamayun_2013}.

In a series of seminal papers (see for example \cite{Gamayun_2012, Gamayun_2013}), it has been conjectured, and in many cases proven \cite{Iorgov_2014,Lisovyy_2018}, that the $\tau$-functions of Painlev\'e are solved by combinatorics.

The conjectured expansion for the $\tau$-function corresponding to the form of the $\sigma$-Painlev\'e III' equation given in \eqref{1.20} is (see Conjecture 5 of \cite{Gamayun_2013})
\begin{equation}\label{tau3 expansion}
\tau_k(t) = \mathcal{N}_0 e^{-\frac{t}{2}} \sum_{n\in\mb{Z}}^{\infty}s_{\mathrm{III}'}^n C_{\text{III}'}(-k, \sigma+n)\, t^{2nk+n^2} B_{\mathrm{III}'}\left(-k,\sigma+n;t\right),
\end{equation}
where $\mathcal{N}_0$ is a constant, $\sigma$ and $s_{\mathrm{III}}$ are complex parameters, which play the role of integration constants,
\begin{equation}\label{structure iii}
\begin{split}
&C_{\text{III'}}(-k, \sigma)\coloneq  \prod_{\epsilon=\pm1}\frac{G(1-k+\epsilon\sigma)G(1+\epsilon\sigma)^2}{  G(1+2\epsilon\sigma)^2},
\end{split}
\end{equation}
are called the \emph{structure constants},
and \(B_{\mathrm{III'}}\left(-k,\sigma;t\right)\) are called \emph{irregular conformal blocks}. They are given by the following series expansion
\begin{equation}\label{irreg block}
    B_{\mathrm{III'}}\left(-k,\sigma;t\right)\coloneq\sum_{\lambda,\mu \in  \mathrm{P}} B^{\text{III'}}_{\lambda,\mu}(-k, \sigma+n)\, t^{|\lambda|+|\mu|},
\end{equation}
where $\mathrm{P}$ denotes the set of partitions and for partitions $\lambda$ and $\mu$, we define the combinatorial quantity
\begin{equation}\label{eq conformal block3}
\begin{split}
B^{\mathrm{III}'}_{\lambda,\mu}(-k, \sigma) \coloneq  &\prod_{(i,j)\in\lambda} \frac{(-k+\sigma+i-j)(\sigma+i-j)}{h_\lambda^2(i,j)\left(\lambda_j' + \mu_i - i - j + 1 + 2\sigma\right)^2}\\
&\times \prod_{(i,j)\in\mu} \frac{(-k-\sigma+i-j)(-\sigma+i-j)}{h_\mu^2(i,j)\left(\lambda_i+\mu_j'-i-j+1-2\sigma\right)^2}.
\end{split}
\end{equation}
In \cite{kn:basor_et_al18}, the authors found that $\sigma=k$. However, when $2\sigma\in\mb{Z}$, the structure constants in \eqref{structure iii} are undefined due to the zeros of the Barnes $G$-function. To overcome this, they defined new structure constant by taking an appropriate limit of the Barnes $G$-function. We refer the reader to Section 6 of \cite{kn:basor_et_al18} for more details. This reasoning led Basor et al. to make the following conjecture.
\begin{conjecture}[Conjecture 2 of \cite{kn:basor_et_al18}]\label{conj2} Let $k,h\in\mb{N}_0$ with $h\le k$. Then,\footnote{There is a typo in (6-32) of \cite{kn:basor_et_al18} in that the exponential factor $e^{-\frac{t}{2}}$ is omitted. We
have given here a corrected formula.}
\begin{equation}\label{conj2 eq}
|\mathcal{Z}|(k,2h) = (-1)^h \frac{G(k+1)^2}{G(2k+1)}\frac{\mathrm{d}^{2h}}{\mathrm{d}t^{2h}}\left[e^{-\frac{t}{2}}\sum_{\substack{\lambda \in  \mathrm{P} \\ \, \lambda_1 \leqslant k}} \prod_{(i,j)\in\lambda} \frac{(2k+i-j)(k+i-j)}{h_\lambda^2(i,j)\left(\lambda_j' - i - j + 1 + 2k\right)^2}t^{|\lambda|}\right].
\end{equation}
\end{conjecture}
We will now prove the following.
\begin{theorem}\label{conj proof}
    Conjecture \ref{conj2} follows from Dehaye's result \cite{kn:dehaye10p}, who proved that
    \begin{equation}\label{dehaye conformal}
        |\mathcal{Z}|(k,2h)=(-1)^h \frac{G(k+1)^2}{G(2k+1)}\sum_{r=0}^{2h}\binom{2h}{r}\left(\frac{-1}{2}\right)^{2h-r}
       r!\sum_{\substack{\\ \lambda\vdash_{k}r}}\frac{ [k]_\lambda}{H^2_\lambda [2k]_\lambda},
    \end{equation}
    for $k,h\in\mb{N}_0$ satisfying $h\le k$.
\end{theorem}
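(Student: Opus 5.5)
The plan is to expand the $2h$-th derivative in Conjecture \ref{conj2} with the Leibniz rule, and then match the resulting Taylor coefficients term by term with the inner sums in Dehaye's formula \eqref{dehaye conformal}. The heart of the argument is a box-by-box product identity together with transposition of partitions.

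\textbf{Step 1 (Leibniz).} Write the bracket in \eqref{conj2 eq} as $e^{-t/2}F_k(t)$, where $F_k(t)=\sum_{r\ge 0}c_r t^r$ and
\[
c_r=\sum_{\substack{\lambda\vdash r\\ \lambda_1\le k}}\prod_{(i,j)\in\lambda}\frac{(2k+i-j)(k+i-j)}{h_\lambda^2(i,j)\left(\lambda_j'-i-j+1+2k\right)^2}.
\]
The Leibniz rule gives
\[
\frac{\mathrm{d}^{2h}}{\mathrm{d}t^{2h}}\left[e^{-t/2}F_k(t)\right]\Big|_{t=0}=\sum_{r=0}^{2h}\binom{2h}{r}\left(\frac{-1}{2}\right)^{2h-r}r!\,c_r .
\]
The prefactors $(-1)^h G(k+1)^2/G(2k+1)$ are already identical on both sides. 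It therefore suffices to prove, for every $r$,
\[
c_r=\sum_{\mu\vdash_k r}\frac{[k]_\mu}{H_\mu^2[2k]_\mu}.
\]

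\textbf{Step 2 (the denominator identity).} Fix a column index $j$ of $\lambda$. The row index $i$ then runs over $1,\dots,\lambda'_j$, and the substitution $i\mapsto \lambda'_j+1-i$ permutes this range. Hence
\[
\prod_{i=1}^{\lambda'_j}\left(2k+1+\lambda'_j-i-j\right)=\prod_{i=1}^{\lambda'_j}\left(2k+i-j\right).
\]
Taking the product over $j$ turns the squared "$\lambda'_j$" factor in the denominator into $\prod_{(i,j)\in\lambda}(2k+i-j)^2$. One copy of this cancels the numerator factor $\prod(2k+i-j)$. Since $\prod h_\lambda^2(i,j)=H_\lambda^2$, the summand for $\lambda$ becomes
\[
\frac{\prod_{(i,j)\in\lambda}(k+i-j)}{H_\lambda^2\prod_{(i,j)\in\lambda}(2k+i-j)}.
\]

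\textbf{Step 3 (transpose).} With the paper's box convention, $\prod_{(i,j)\in\lambda}(x+i-j)$ is the generalised Pochhammer symbol of the conjugate partition, equal to $[x]_{\lambda'}$, up to the sign $(-1)^{|\lambda|}$ coming from $[x]_\lambda=(-1)^{|\lambda|}[x]_{\lambda'}$. Any such sign appears identically in the numerator (with $x=k$) and the denominator (with $x=2k$), so it cancels. Using $H_\lambda=H_{\lambda'}$, the summand equals $[k]_\mu/(H_\mu^2[2k]_\mu)$ with $\mu=\lambda'$. As $\lambda$ ranges over partitions of $r$ with $\lambda_1\le k$, the conjugate $\mu$ ranges exactly over partitions of $r$ with $\ell(\mu)\le k$, which gives the identity required in Step 1.

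One also has to check that nothing degenerates. For $\ell(\mu)\le k$, every factor $2k+j-i$ of $[2k]_\mu$ has $i\le k$ and $j\ge1$, so it is at least $k+1>0$ and $[2k]_\mu\neq 0$. Equivalently, every denominator factor on the conjecture side is nonzero for $\lambda_1\le k$.

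The main obstacle is bookkeeping rather than anything conceptual. One must pin down which coordinate of $(i,j)$ is the row under the paper's Young-diagram convention, so that the reindexing in Step 2 runs along the correct direction (the column of length $\lambda'_j$). One must also confirm that the signs and the $\lambda_1\le k$ versus $\ell\le k$ constraints line up after transposition. I would verify both on a small example such as $\lambda=(2,1)$ before writing the general argument.
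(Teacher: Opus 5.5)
Your proposal is correct and is essentially the paper's argument run in the opposite direction: the Leibniz (exponential generating function) step, then the column reversal $i\mapsto\lambda'_j+1-i$ identifying $\prod_{(i,j)\in\lambda}(\lambda'_j-i-j+1+2k)$ with $\prod_{(i,j)\in\lambda}(2k+i-j)$, then reindexing by $\mu=\lambda'$ using $H_\lambda=H_{\lambda'}$ and the bijection between $\lambda_1\le k$ and $\ell(\mu)\le k$. Your column-reversal identity has the correct sign $+2k$ (the last line of the paper's display \eqref{eq pochammer symbol} has a slip, $-2k$), and the sign hedge in Step 3 is unnecessary because $\prod_{(i,j)\in\lambda}(x+i-j)=[x]_{\lambda'}$ exactly (the relation you quote should read $[x]_\lambda=(-1)^{|\lambda|}[-x]_{\lambda'}$), although, as you observe, a common sign would cancel anyway.
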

\begin{proof}
We recall that if $A(t)$ and $B(t)$ are (exponential) generating functions given by 
\[A(t)=\sum_{r=0}^\infty a_r\frac{t^r}{r!}\quad\text{and} \quad B(t)=\sum_{r=0}^\infty b_r\frac{t^r}{r!}, \] then the coefficients of the expansion of their product, are given by
\[\frac{\mathrm{d}^n}{\mathrm{d}t^n}\left[A(t)B(t)\right]\big|_{t=0}=\sum_{r=0}^n\binom{n}{r}a_{n-r}b_r.\]
Therefore, Dehaye's result \eqref{dehaye conformal} may be written as
\begin{equation}\label{idk}
|\mathcal{Z}|(k,2h) = (-1)^h \frac{G(k+1)^2}{G(2k+1)}\frac{\mathrm{d}^{2h}}{\mathrm{d}t^{2h}}\left[e^{-\frac{t}{2}}\sum_{\substack{\lambda \in  \mathrm{P} \\ \, \ell(\lambda)\le k}}\frac{ [k]_\lambda}{H^2_\lambda [2k]_\lambda}t^{|\lambda|}\right].\end{equation}
 We may index this sum by the transposes of partitions, rather than the partitions themselves:
    \[\begin{split}\sum_{\substack{\lambda \in  \mathrm{P} \\ \, \ell(\lambda)\le k}}\frac{ [k]_\lambda}{H^2_\lambda [2k]_\lambda}t^{|\lambda|}=\sum_{\substack{ \lambda\in\mathrm{P}\\\lambda_1\le k}}\frac{ [k]_{\lambda'}}{H^2_{\lambda'}[2k]_{\lambda'}}t^{|\lambda|}=\sum_{\substack{ \lambda\in\mathrm{P}\\\lambda_1\le k}}\prod_{(i,j)\in\lambda} \frac{(k+i-j)}{h^2_{\lambda}(i,j)\left(2k+i-j\right)}t^{|\lambda|},
    \end{split}\]
     where we have made use of the fact that $H_\lambda=H_{\lambda'}$ and $|\lambda|=|\lambda'|$.
      Furthermore, we have
    \begin{equation}\label{eq pochammer symbol}
    \begin{split}
        \prod_{(i,j)\in\lambda} (2k+i-j)=\prod_{i=1 }^{\ell(\lambda')}\prod_{j=1}^{\lambda'_i}(2k+j-i)&=\prod_{i=1}^{\ell(\lambda')}\prod_{j=1}^{\lambda'_i}(2k+\lambda'_i-j-i+1),\\
        &=\prod_{(i,j)\in\lambda}\left(\lambda_j'-i-j+1-2k\right),
        \end{split}
    \end{equation}
    where the first equality is justified by reflecting the Young diagram of $\mu$ in the diagonal, and the second follows from mapping $j\mapsto\mu'_i-j+1$. Therefore, we have proven the identity \eqref{conj2 eq} and Conjecture \ref{conj2} follows.
\end{proof}

\subsection{Acknowledgements}
 The author would like to express his gratitude to his supervisors Emma Bailey and Nina Snaith for carefully reading through previous versions of
the paper, and for their continued support. He would also like to thank Harini Desiraju, Sacha Grover and Micha\l \ Szwej for helpful conversations and encouragement. The author gratefully acknowledges support from the Heilbronn Institute for Mathematical Research.

\section{The joint moment problem and Kostka numbers}

In this section, we will prove Theorem \ref{Theorem A}. Our method is inspired by the work of Dehaye on the first order joint moments \cite{kn:dehaye10, kn:dehaye10p}. He proved an exact formula for $\mathcal{M}_N(k,r)$ by relating it to derivatives of Schur polynomials. In Proposition \ref{prop modified moments}, we will prove
\[\mathcal{M}_N(k;\alpha)=\partial^\alpha s_{\langle N^k\rangle}(\{1\}^{2k}),\] 
where for a weak composition $\alpha=(\alpha_1,\ldots,\alpha_{2k})$ we define
\[\partial^\alpha=\overbrace{\partial_{x_1}\,\partial_{x_1}\,\cdots\,\partial_{x_1}}^{\alpha_1\ \text{times}} \ \ \overbrace{\partial_{x_2}\,\partial_{x_2}\,\cdots\,\partial_{x_2}}^{\alpha_2\ \text{times}} \ \bullet\bullet \bullet   \ \overbrace{\partial_{x_{2k}}\,\partial_{x_{2k}}\,\cdots\,\partial_{x_{2k}}}^{\alpha_{2k}\ \text{times}} .\]

The generalised binomial theorem of \cite{OkounkovOlshanski1} will allow us to compute these derivatives. The remainder of the section will then be spent proving Theorem \ref{Theorem 1} and Theorem \ref{Theorem 2}.
\subsection{Computing \texorpdfstring{$\mathcal{M}_N(k;\alpha)$}{M\_N(k; alpha)}}\label{modified moments and schur}
The following lemma was first proven in \cite{kn:cfkrs1}, and then proven again by Bump and Gamburd \cite{kn:bumgam06}.
    \begin{lemma}\label{BG lemma}
    Let $k,N\in \mb{N}$ and $x_1,\ldots,x_{2k}$ be complex numbers. Then, \[\int_{\mathrm{U}(N)}\prod_{j=1}^k\Lambda_{U^\dagger}(x_j^{})\Lambda_U(x_{k+j})\mathrm{d}U=(x_1\cdots x_k)^Ns_{\langle N^k\rangle}(x_1^{-1},\ldots,x_k^{-1},x_{k+1},\ldots ,x_{2k}).\]
\end{lemma}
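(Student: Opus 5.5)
The plan is to expand both sides in Schur polynomials and compare them using Schur orthogonality. Write $a=(x_1,\ldots,x_k)$ and $b=(x_{k+1},\ldots,x_{2k})$, and let $e^{\mf{i}\theta_1},\ldots,e^{\mf{i}\theta_N}$ be the eigenvalues of $U$. Then
\[\Lambda_{U^\dagger}(x_j)=\det(I-x_jU)=\prod_{n}(1-x_je^{\mf{i}\theta_n}),\qquad \Lambda_U(x_{k+j})=\prod_n(1-x_{k+j}e^{-\mf{i}\theta_n}).\]
By the dual Cauchy identity $\prod_{i,j}(1+a_iy_j)=\sum_\lambda s_\lambda(a)s_{\lambda'}(y)$, applied with $a_i\mapsto -a_i$, we get
\[\prod_{j=1}^k\Lambda_{U^\dagger}(x_j)=\sum_{\lambda}(-1)^{|\lambda|}s_\lambda(a)\,\chi_{\lambda'}(U).\]
Similarly,
\[\prod_{j=1}^k\Lambda_U(x_{k+j})=\sum_\mu(-1)^{|\mu|}s_\mu(b)\,\overline{\chi_{\mu'}(U)}.\]
Only partitions with $\ell(\lambda)\le k$ survive, since $s_\lambda(a)=0$ otherwise, and only those with $\ell(\lambda')\le N$, since $\chi_{\lambda'}$ vanishes on $\mathrm{U}(N)$ otherwise. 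In other words, $\lambda$ ranges over partitions contained in the $k\times N$ rectangle $\langle N^k\rangle$, and the same holds for $\mu$.

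Next I integrate term by term. Both sums are finite, so Schur orthogonality applies directly and forces $\lambda'=\mu'$. The signs then cancel, giving
\[\int_{\mathrm{U}(N)}\prod_{j=1}^k\Lambda_{U^\dagger}(x_j)\Lambda_U(x_{k+j})\,\mathrm{d}U=\sum_{\lambda\subseteq\langle N^k\rangle}s_\lambda(a)\,s_\lambda(b).\]

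It remains to identify this sum with the right-hand side of the lemma. This purely symmetric-function step is the one I expect to need the most care. I would use two facts.

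First, for $k$ variables and $\lambda\subseteq\langle N^k\rangle$, define the complementary partition $\lambda^c=(N-\lambda_k,\ldots,N-\lambda_1)$. Then
\[(a_1\cdots a_k)^N s_\lambda(a^{-1})=s_{\lambda^c}(a).\]
This follows directly from the bialternant formula \eqref{schur determinant}: invert the variables, multiply by $(a_1\cdots a_k)^{N+k-1}$, and reverse the order of the columns in both determinants. The column-reversal signs agree in the numerator and the denominator, so they cancel.

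Second, the branching rule for Schur polynomials of the rectangle gives
\[s_{\langle N^k\rangle}(a,b)=\sum_{\lambda\subseteq\langle N^k\rangle}s_{\langle N^k\rangle/\lambda}(a)\,s_\lambda(b).\]
The skew shape $\langle N^k\rangle/\lambda$ is the $180^\circ$ rotation of $\lambda^c$, so $s_{\langle N^k\rangle/\lambda}=s_{\lambda^c}$. To justify this, I would reverse both the positions and the entries of semistandard tableaux, which is a bijection preserving the multiset of entries.

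Now apply these with $a$ replaced by $a^{-1}$. Using the second fact and then the first,
\[(x_1\cdots x_k)^Ns_{\langle N^k\rangle}(a^{-1},b)=\sum_{\lambda\subseteq\langle N^k\rangle}(x_1\cdots x_k)^Ns_{\lambda^c}(a^{-1})\,s_\lambda(b).\]
Since complementation is an involution on the rectangle, the first fact also gives $(x_1\cdots x_k)^Ns_{\lambda^c}(a^{-1})=s_\lambda(a)$. The sum therefore becomes $\sum_{\lambda\subseteq\langle N^k\rangle}s_\lambda(a)s_\lambda(b)$, which is exactly the result of the integration above. This proves the lemma.

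The main obstacle is keeping the conventions consistent. One must check that the variables appearing inverted inside $s_{\langle N^k\rangle}$ match the prefactor $(x_1\cdots x_k)^N$ given the definition $\Lambda_U(s)=\det(I-sU^\dagger)$. One must also confirm that the complement and the rotated skew shape agree as partitions in exactly $k$ variables.
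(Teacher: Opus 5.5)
Your proof is correct, but it takes a different route from the one the paper relies on. The paper cites the lemma rather than proving it, and reproduces the Bump--Gamburd argument in the proof of Proposition~\ref{prop modified moments}. That argument first rewrites $\Lambda_{U^\dagger}(x_j)=\det(I-x_jU)=(-x_j)^N\det U\,\Lambda_U(x_j^{-1})$. Using $(\det U)^k=\chi_{\langle k^N\rangle}(U)$, the integrand becomes $(-1)^{kN}(x_1\cdots x_k)^N\chi_{\langle k^N\rangle}(U)\prod_{j=1}^{2k}\Lambda_U(y_j)$, with $y=(x_1^{-1},\ldots,x_k^{-1},x_{k+1},\ldots,x_{2k})$. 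A single dual Cauchy expansion in the $2k$ variables $y$, followed by orthogonality against $\chi_{\langle k^N\rangle}$, isolates the lone term $\lambda=\langle N^k\rangle$, and the two factors $(-1)^{kN}$ cancel.

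In that route the inverted variables and the prefactor $(x_1\cdots x_k)^N$ come directly from the determinant identity, so nothing remains to be done after integrating. The output is already $s_{\langle N^k\rangle}$ in all $2k$ variables, which is exactly the form the paper then differentiates.

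You instead expand the two families separately and integrate to the truncated Cauchy sum $\sum_{\lambda\subseteq\langle N^k\rangle}s_\lambda(a)s_\lambda(b)$. You then need the complementation identity and the branching rule for the rectangle to reassemble a single Schur polynomial. This costs more symmetric-function machinery. In return it yields the intermediate formula, which makes the finite range $\lambda\subseteq\langle N^k\rangle$ explicit, and along the way it proves the identity $\sum_{\lambda\subseteq\langle N^k\rangle}s_\lambda(a)s_\lambda(b)=(a_1\cdots a_k)^Ns_{\langle N^k\rangle}(a^{-1},b)$.

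One small correction to your justification of $s_{\langle N^k\rangle/\lambda}=s_{\lambda^c}$. Rotating a tableau by $180^\circ$ and replacing each entry $t$ by $k+1-t$ is indeed a bijection onto semistandard tableaux of the rotated shape. However, it reverses the content vector rather than preserving the multiset of entries. You therefore also need the symmetry of skew Schur polynomials (e.g.\ via Bender--Knuth involutions) to conclude.
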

To identify $\mathcal{M}_N(k;\alpha)$ with the derivatives of Schur polynomials, we will closely follow the proof of Lemma \ref{BG lemma} by Bump and Gamburd. In particular, we will use the \emph{dual Cauchy identity} and Schur orthogonality.
\begin{proposition}[Cauchy identity]
Let $z_{1}, \ldots, z_{p}$ and $y_{1}, \ldots, y_{q}$ be two sets of variables. Then
\begin{equation}\label{cauchy}
\prod_{i=1}^{p} \prod_{j=1}^{q} (1 - z_{i} y_{j})^{-1}
= \sum_{\lambda} s_{\lambda}(z_{1}, \ldots, z_{p}) \,
  s_{\lambda}(y_{1}, \ldots, y_{q}),
\end{equation}
where the sum is over all partitions $\lambda$ of length $\leq \min(p,q)$.

Both sides are convergent provided the $|z_{i}|$ and $|y_{j}|$ are all less than $1$;
otherwise, this is a formal identity.

We also have the \emph{dual Cauchy identity}
\begin{equation}\label{cauchydual}
\prod_{i=1}^{p} \prod_{j=1}^{q} (1 + z_{i} y_{j})
= \sum_{\lambda\leq \langle q^p\rangle} s_{\lambda}(z_{1}, \ldots, z_{p}) \,
  s_{\lambda'}(y_{1}, \ldots, y_{q}).
\end{equation}
\end{proposition}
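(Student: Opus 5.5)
The plan is to prove the Cauchy identity \eqref{cauchy} with the bialternant formula \eqref{schur determinant} and the Cauchy determinant. I would then deduce the dual identity \eqref{cauchydual} by applying the standard involution $\omega$ on symmetric functions.

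\textbf{Reduction to $p=q$.} First reduce to the case $p=q=n$. If, say, $p<q$, adjoin variables $z_{p+1}=\cdots=z_q=0$. The left-hand side is unchanged. On the right, $s_\lambda(z_1,\ldots,z_p,0,\ldots,0)=s_\lambda(z_1,\ldots,z_p)$ when $\ell(\lambda)\le p$, and it vanishes otherwise, which is visible from Definition \ref{def schur} since a column of length $>p$ cannot be strictly filled from $\{1,\ldots,p\}$. This produces exactly the stated restriction $\ell(\lambda)\le\min(p,q)$.

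\textbf{The Cauchy identity for $p=q=n$.} I would use the Cauchy determinant
\[\det_{n\times n}\!\left(\frac{1}{1-z_iy_j}\right)=\frac{\Delta(z)\Delta(y)}{\prod_{i,j}(1-z_iy_j)},\qquad \Delta(z)=\prod_{i<j}(z_i-z_j).\]
This holds because, after clearing denominators, the left side times $\prod(1-z_iy_j)$ is a polynomial that is antisymmetric in the $z$'s and in the $y$'s, of the correct degree. It is therefore a constant multiple of $\Delta(z)\Delta(y)$, and the constant is fixed by comparing leading coefficients, or by induction on $n$ via row reduction.

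Next I would expand each entry as a geometric series and use multilinearity:
\[\det\left(\sum_{a\ge0}z_i^ay_j^a\right)=\sum_{a_1>\cdots>a_n\ge0}\det(z_i^{a_j})\det(y_i^{a_j}).\]
This comes from antisymmetrising over the $n$ column indices, since terms with repeated exponents cancel. Writing $a=\lambda+\delta$ with $\delta=(n-1,\ldots,0)$ and dividing by $\Delta(z)\Delta(y)$, formula \eqref{schur determinant} gives \eqref{cauchy}.

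For $|z_i|,|y_j|<1$ all the series converge absolutely, because the monomial coefficients of $s_\lambda$ are nonnegative and $\sum_\lambda s_\lambda(|z|)s_\lambda(|y|)=\prod(1-|z_i||y_j|)^{-1}<\infty$. Otherwise the identity is read as one of formal power series.

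\textbf{The dual identity.} View \eqref{cauchy} as an identity in the ring of symmetric functions in $y$ (with infinitely many variables), with coefficients in $\mathbb{C}[[z]]$. Then
\[\prod_{j}(1-z_iy_j)^{-1}=\sum_m h_m(y)z_i^m,\qquad \prod_j(1+z_iy_j)=\sum_m e_m(y)z_i^m.\]
Define the ring involution $\omega$ by $\omega(h_m)=e_m$, and apply it in the $y$ variables. The left side becomes $\prod_{i,j}(1+z_iy_j)$ and the right side becomes $\sum_\lambda s_\lambda(z)\,\omega s_\lambda(y)$.

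It then remains to use $\omega s_\lambda=s_{\lambda'}$, which I expect to be the main obstacle. My first attempt would go through Theorem \ref{JacobiTrudi} together with the dual Jacobi--Trudi formula $s_{\lambda'}=\det(e_{\lambda_i-i+j})$. The dual formula follows from the relation $\sum_{r}(-1)^re_rh_{m-r}=0$: the Toeplitz matrices $(h_{j-i})$ and $((-1)^{j-i}e_{j-i})$ are mutually inverse, so complementary minors are related by Jacobi's identity on minors of an inverse matrix.

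Finally, specialise to $y_{q+1}=y_{q+2}=\cdots=0$. Then $s_{\lambda'}(y_1,\ldots,y_q)=0$ unless $\ell(\lambda')=\lambda_1\le q$. Combined with $\ell(\lambda)\le p$, this gives the range $\lambda\le\langle q^p\rangle$ in \eqref{cauchydual}. The resulting sum is finite, so no convergence issue arises there.
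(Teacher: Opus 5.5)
The paper gives no proof of this proposition. It quotes it as a classical fact, and only the dual identity \eqref{cauchydual} is actually used, through \eqref{eq16}. So there is no argument in the paper to compare yours against.

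Your proof is correct and is the standard textbook one. The Cauchy determinant together with the bialternant formula \eqref{schur determinant} gives \eqref{cauchy}. The involution $\omega$, combined with $\omega s_\lambda=s_{\lambda'}$, then transports this to \eqref{cauchydual}. The truncation to $\lambda\le\langle q^p\rangle$ correctly comes from $\ell(\lambda)\le p$ and $\ell(\lambda')=\lambda_1\le q$.

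What your route buys is self-containment, using only tools the paper already quotes (the bialternant formula and Theorem \ref{JacobiTrudi}). The cost is that the identity the paper needs is reached only after the hardest step, $\omega s_\lambda=s_{\lambda'}$. You rightly identify that step as the main obstacle and handle it correctly, via the mutually inverse Toeplitz matrices $(h_{j-i})$ and $((-1)^{j-i}e_{j-i})$ and Jacobi's complementary-minor theorem.

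Three points to tighten.

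\begin{itemize}
\item \emph{Degree bound.} In the Cauchy determinant, the bound that makes the argument work is ``degree at most $n-1$ in each individual variable''. The cleared numerator a priori has total degree up to $2n(n-1)$, so a total-degree comparison alone does not force it to be a constant multiple of $\Delta(z)\Delta(y)$.
\item \emph{Fixing the constant.} You can do this for free from your own geometric-series expansion. There the lowest-degree term comes only from $a=\delta=(n-1,\ldots,0)$ and equals $\Delta(z)\Delta(y)$, while $c\,\Delta(z)\Delta(y)\prod_{i,j}(1-z_iy_j)^{-1}$ has lowest-degree term $c\,\Delta(z)\Delta(y)$. Hence $c=1$.
\item \emph{Infinitely many $y$-variables.} Before applying $\omega$, say why \eqref{cauchy} holds in that setting. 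Each coefficient of a $z$-monomial of degree $d$ is an identity in $\Lambda^d$, and the restriction $\Lambda^d\to\Lambda^d_q$ is an isomorphism for $q\ge d$. This matters because $\omega$, defined by $h_m\mapsto e_m$, is not well defined on $\Lambda_q$ itself.
\end{itemize}
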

The importance of the dual Cauchy identity to averages of characteristic polynomials is the following observation. If $U\in \mathrm{U}(q)$, then
\begin{equation}\label{eq16}
    \prod_{i=1}^p\Lambda_U(x_i)=\sum_{\lambda \le  \langle q^p\rangle}(-1)^{|\lambda|}s_\lambda (x_1,\ldots,x_p)\overline{s_{\lambda'}(U)}.
\end{equation}
The following proposition is helpful since the Schur polynomial $s_{\langle N^k\rangle}(x_1,\ldots,x_{2k})$ is a character of an irreducible representation of $\mathrm{U}(2k)$. Consequently, it has many interesting properties, which are of use to us.

\begin{proposition}\label{prop modified moments}
    Let $\alpha$ be a partition of length $\le 2k$. Then
    \[\mathcal{M}_N(k;\alpha)=\partial^\alpha s_{\langle N^k\rangle}(\{1\}^{2k}).\]
\end{proposition}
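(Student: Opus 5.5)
The plan is to rewrite the integrand of $\mathcal{M}_N(k;\alpha)$ as $\partial^\alpha$ applied to a product of $2k$ characteristic polynomials in independent variables $x_1,\ldots,x_{2k}$, evaluated at $x_j=1$. We then integrate that product using the dual Cauchy identity and Schur orthogonality, exactly as in Bump and Gamburd's proof of Lemma \ref{BG lemma}. Throughout, regard $\alpha$ as a weak composition $(\alpha_1,\ldots,\alpha_{2k})$ by padding with zeros. This is possible because $\ell(\alpha)=h\le 2k$. The parts $\alpha_j$ are then the derivative orders: $\alpha$ has $h_i$ parts equal to $i$.

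\textbf{Step 1: remove the complex conjugates.} For $U\in\mathrm{U}(N)$ we have
\[
\overline{\Lambda_U(1)}=\det(I-U)=\det(-U)\det(I-U^\dagger)=(-1)^N\det U\,\Lambda_U(1).
\]
Hence $|\Lambda_U(1)|^{2k}=(-1)^{Nk}(\det U)^k\Lambda_U(1)^{2k}$. Each ratio $\Lambda^{(i)}_U(1)/\Lambda_U(1)$ cancels one of the $2k$ factors $\Lambda_U(1)$ and replaces it by $\Lambda_U^{(i)}(1)$. Since $h\le 2k$ there are enough factors, and the integrand becomes
\[
(-1)^{Nk}(\det U)^k\prod_{j=1}^{2k}\Lambda_U^{(\alpha_j)}(1)
=(-1)^{Nk}(\det U)^k\,\partial^\alpha\Bigl[\prod_{j=1}^{2k}\Lambda_U(x_j)\Bigr]_{x_1=\cdots=x_{2k}=1}.
\]
Everything here is a polynomial in the $x_j$ with coefficients continuous in $U$. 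Therefore $\partial^\alpha$ and evaluation at $1$ may be taken outside the Haar integral, and it suffices to compute $I(x)=\int_{\mathrm{U}(N)}(\det U)^k\prod_{j=1}^{2k}\Lambda_U(x_j)\,\mathrm{d}U$.

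\textbf{Step 2: integrate.} Apply \eqref{eq16} with $p=2k$ and $q=N$:
\[
\prod_{j=1}^{2k}\Lambda_U(x_j)=\sum_{\lambda\le\langle N^{2k}\rangle}(-1)^{|\lambda|}s_\lambda(x_1,\ldots,x_{2k})\,\overline{s_{\lambda'}(U)}.
\]
Also $(\det U)^k=s_{\langle k^N\rangle}(U)$. Schur orthogonality then gives $\int s_{\langle k^N\rangle}(U)\overline{s_{\lambda'}(U)}\,\mathrm{d}U=\delta_{\lambda',\langle k^N\rangle}$. So only $\lambda=\langle N^k\rangle$ survives, and
\[
I(x)=(-1)^{Nk}s_{\langle N^k\rangle}(x_1,\ldots,x_{2k}).
\]
Combining this with Step 1, the two factors $(-1)^{Nk}$ cancel, giving $\mathcal{M}_N(k;\alpha)=\partial^\alpha s_{\langle N^k\rangle}(\{1\}^{2k})$.

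The only delicate point is the sign and conjugation bookkeeping in Step 1. One has to check that the identity for $\overline{\Lambda_U(1)}$ is used with the paper's convention $\Lambda_U(s)=\det(I-sU^\dagger)$. One also has to check that the $(\det U)^k$ factor pairs with $\overline{s_{\lambda'}(U)}$ in the correct orientation, so that orthogonality selects $\lambda'=\langle k^N\rangle$ rather than a conjugate shape. It is also worth confirming that placing the $h$ derivatives on any $h$ of the $2k$ variables gives the same result. This holds because $s_{\langle N^k\rangle}$ is symmetric, so the choice of padding order for $\alpha$ is immaterial.
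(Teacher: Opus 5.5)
Your proposal is correct and follows essentially the same route as the paper. You rewrite $|\Lambda_U(1)|^{2k}=(-1)^{kN}(\det U)^k\Lambda_U(1)^{2k}$, express the integrand as $\partial^\alpha$ of $\prod_{j=1}^{2k}\Lambda_U(x_j)$ at $x_j=1$, expand via the dual Cauchy identity \eqref{eq16}, and use Schur orthogonality against $(\det U)^k=s_{\langle k^N\rangle}(U)$ to isolate $\lambda=\langle N^k\rangle$, whose sign $(-1)^{|\lambda|}=(-1)^{kN}$ cancels the prefactor. Your conjugation step $\overline{\Lambda_U(1)}=\det(I-U)=(-1)^N\det U\,\Lambda_U(1)$ is also stated more carefully than the paper's corresponding line, which writes $\det(I-U^\dagger)^k$ where $\det(I-U)^k$ is meant.
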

\begin{proof}
    We have
    \begin{align*}
         \left| \Lambda_U(1)\right|^{2k}&=\Lambda_U(1)^k\det(I-U^\dagger)^k\
        =(-1)^{kN}\left(\det U\right)^k\Lambda_U(1)^{2k}.
    \end{align*}
   If we set $t_1,\ldots,t_N$ to be the eigenvalues of $U$, then 
   \[\left(\det U\right)^k=(t_1\cdots t_N)^k=s_{\langle k^N\rangle}(t_1,\ldots,t_N)=s_{\langle k^N\rangle}(U),\] where the second equality is a simple check from Definition \ref{def schur}. Thus,
   \begin{align*}
       \mathcal{M}_N(k;\alpha)&=\mb{E}_{\mathrm{U}(N)}\left[\left| \Lambda_U(1)\right|^{2k}\prod_{i=1}^{\ell(\alpha)}\left(\frac{\Lambda_U^{(\alpha_i)}(1)}{\Lambda_U(1)}\right)\right]\\
       &=(-1)^{kN}\mb{E}_{\mathrm{U}(N)}\left[\left(\det U\right)^k\left(\Lambda_U(1)\right)^{2k-\ell(\alpha)}\prod_{i=1}^{\ell(\alpha)}{\Lambda_U^{(\alpha_i)}(1)}\right]\\
       &=(-1)^{kN}\mb{E}_{\mathrm{U}(N)}\left[s_{\langle k^N\rangle}(U)\partial^\alpha\prod_{i=1}^{2k}\Lambda_U(x_j)\Big|_{x_1=\cdots =x_{2k}=1}\right],
   \end{align*}
   where we are taking expectation with respect to the Haar measure.
   After setting $q=N$ and $p=2k$, we may apply equation (\ref{eq16}) and obtain
   
   \begin{align*}
       \mathcal{M}_N(k;\alpha)&=(-1)^{kN}\partial^\alpha\mb{E}_{\mathrm{U}(N)}\left[s_{\langle k^N\rangle}(U)\sum_\lambda (-1)^{|\lambda|}s_\lambda(x_1,\ldots,x_{2k})\overline{s_{\lambda'}(U)}\right]\Bigg|_{x_1=\cdots =x_{2k}=1}\\
       &=\sum_\lambda (-1)^{|\lambda|+kN}\partial^\alpha \big(s_\lambda(x_1,\ldots,x_{2k})\cdot\langle\chi_{\langle k^N\rangle},\chi_{\lambda'}\rangle_{\mathrm{U}(N)}\big)\big|_{x_1=\cdots =x_{2k}=1}.
   \end{align*}
   By Schur orthogonality, all terms vanish except the partition $\lambda'=\langle k^N\rangle$, or equivalently, $\lambda=\langle N^k\rangle$.
\end{proof}

\subsubsection{Derivatives of Schur Polynomials and Proof of Theorem \ref{Theorem A}.}
By Proposition \ref{prop modified moments}, we have reduced our task to computing derivatives of Schur polynomials. 
Equivalently, we wish to compute the coefficients in the Taylor expansion
\begin{equation}\label{Schur Taylor}
    s_{\langle N^k\rangle}(1+x_1,\ldots,1+x_{2k})=\sum_{\mu}\frac{\partial ^\mu s_{\langle N^k\rangle}(\{1\}^{2k})}{\mu!}m_\mu(x_1,\ldots,x_{2k}).
\end{equation}
We emphasise that \eqref{Schur Taylor} is indeed the (multivariate) Taylor expansion of $s_{\langle N^k\rangle}(x_1,\ldots,x_{2k})$ about the point $(1,\ldots,1)$, where we have used the symmetry of the Schur polynomial to sum over the symmetric polynomials $m_\mu(x_1,\ldots,x_{2k})$.

However, observe that for a partition $\lambda$,
\[s_\lambda(1+x_1,\ldots,1+x_n)=s_\lambda(x_1,\ldots,x_n)+ \text{lower order terms}.\] Hence, it is more convenient to obtain the expansion in (\ref{Schur Taylor}) by first passing through an expansion in Schur polynomials. Fortunately, such an expansion is known due to Okounkov and Olshanski \cite{OkounkovOlshanski1}.
\begin{definition}\label{shifted def}
    The \textit{shifted Schur polynomial} in $n$ variables, 
indexed by a partition $\mu$ with $\ell(\mu) \leq n$, is defined as the quotient of two $n\times n$ determinants,
\[s^*_\mu(x_1,\ldots x_n)=\frac{\det\left((x_i+n-i)_{\mu_j+n-j}\right)}{\det\left((x_i+n-i)_{n-j}\right)}.\]
\end{definition}
The analogy to the standard Schur polynomials can be seen by comparing Definition \ref{shifted def} with \eqref{schur determinant}.
The following formula, proved by Okounkov and Olshanski, is called the generalised binomial formula. If $n=1$, the formula turns into the classical binomial formula.
\begin{theorem}[Generalised binomial theorem, \cite{OkounkovOlshanski1}]\label{binom}
 Let $\lambda$ be a partition of length $\leq n$. Then \begin{equation}\label{binomial2}
      \begin{split}
        \frac{s_{\lambda}(1+t_{1}, \ldots, 1+t_{n})}
     {s_{\lambda}(\{1\}^{n})}
=&\sum_{\substack{\mu \\ l(\mu) \leq n}}
\frac{s^{*}_{\mu}(\lambda_1,\ldots,\lambda_n) \,
      s_{\mu}(t_{1}, \ldots, t_{n})}
     {[n\boxt{\mu}}.
     \end{split}
    \end{equation}
\end{theorem}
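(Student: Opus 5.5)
The plan is to prove the formula directly from the bialternant formula \eqref{schur determinant}, using the Cauchy--Binet expansion and no representation theory. Set $l_j=\lambda_j+n-j$, so that $l_1>l_2>\cdots>l_n\ge 0$.

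\emph{Step 1: the denominator.} The denominator of $s_\lambda(1+t_1,\ldots,1+t_n)$ is the Vandermonde determinant $\det\bigl((1+t_i)^{n-j}\bigr)=\prod_{i<j}\bigl((1+t_i)-(1+t_j)\bigr)$. Since the shift by $1$ cancels in each difference, this equals $\det\bigl(t_i^{n-j}\bigr)$.

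\emph{Step 2: the numerator.} Expand each entry $(1+t_i)^{l_j}=\sum_{m\ge 0}\binom{l_j}{m}t_i^m$. This writes the matrix $\bigl((1+t_i)^{l_j}\bigr)$ as the product of the $n\times\infty$ matrix $(t_i^m)$ with the $\infty\times n$ matrix $\bigl(\binom{l_j}{m}\bigr)$. By Cauchy--Binet,
\[
\det\bigl((1+t_i)^{l_j}\bigr)=\sum_{m_1>\cdots>m_n\ge 0}\det\bigl(t_i^{m_a}\bigr)\det\Bigl(\tbinom{l_j}{m_a}\Bigr).
\]
Write $m_a=\mu_a+n-a$ for a partition $\mu$ with $\ell(\mu)\le n$. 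Dividing by the result of Step 1 and using \eqref{schur determinant} again gives
\[
s_\lambda(1+t_1,\ldots,1+t_n)=\sum_{\ell(\mu)\le n}\frac{\det\bigl((l_j)_{\mu_a+n-a}\bigr)}{\prod_a(\mu_a+n-a)!}\,s_\mu(t_1,\ldots,t_n).
\]
Here I pulled the factor $1/(\mu_a+n-a)!$ out of row $a$, since $\binom{l}{m}=(l)_m/m!$. Note that the terms with $\mu\not\subseteq\lambda$ vanish automatically, but this is not needed.

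\emph{Step 3: match with $s^*_\mu$ and $s_\lambda(\{1\}^n)$.} By Definition \ref{shifted def}, the numerator of $s^*_\mu(\lambda_1,\ldots,\lambda_n)$ is $\det\bigl((l_i)_{\mu_j+n-j}\bigr)$, which is the transpose of the determinant above. Its denominator is $\det\bigl((l_i)_{n-j}\bigr)$. Column operations reduce this to the Vandermonde $\prod_{i<j}(l_i-l_j)$, because $(x)_m$ is monic of degree $m$ in $x$. Hence the coefficient of $s_\mu(t)$ equals $s^*_\mu(\lambda)\prod_{i<j}(l_i-l_j)/\prod_a(\mu_a+n-a)!$.

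Meanwhile, the Weyl dimension formula \eqref{Weyl dimension} gives $s_\lambda(\{1\}^n)=\prod_{i<j}(l_i-l_j)/\prod_a(n-a)!$. Dividing, the ratio of the coefficient to $s_\lambda(\{1\}^n)$ is
\[
s^*_\mu(\lambda)\prod_{a}\frac{(n-a)!}{(\mu_a+n-a)!}.
\]
Finally, by definition
\[
[n]_\mu=\prod_a\prod_{j=1}^{\mu_a}(n+j-a)=\prod_a\frac{(\mu_a+n-a)!}{(n-a)!},
\]
so the ratio is exactly $s^*_\mu(\lambda)/[n]_\mu$, which proves \eqref{binomial2}.

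\emph{Main difficulty.} The only delicate point is the bookkeeping in Step 2. One must apply Cauchy--Binet to an infinite but row-finite expansion; this is legitimate because only $m\le l_1$ contribute. One must also keep track of which index carries the transpose, so that the sign and ordering conventions of $s^*_\mu$ in Definition \ref{shifted def} match exactly. Everything else is a direct identification of factors.
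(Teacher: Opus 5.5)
Your proof is correct and complete. The paper gives no proof of this theorem: it quotes it from Okounkov and Olshanski and uses it as a black box in the proof of Theorem \ref{Theorem A}. So there is no internal argument to compare against. What you have written is essentially the classical binomial-determinant proof. You expand $(1+t_i)^{l_j}$ binomially, apply Cauchy--Binet to get $s_\lambda(1+t)=\sum_{\mu}\det\bigl(\tbinom{l_j}{\mu_a+n-a}\bigr)\,s_\mu(t)$, and then recognise the binomial determinant as $s^*_\mu(\lambda)\prod_{i<j}(l_i-l_j)/\prod_a(\mu_a+n-a)!$.

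The bookkeeping all checks out:
\begin{itemize}
\item The transpose between your determinant and the numerator in Definition \ref{shifted def} is harmless.
\item The denominator $\det\bigl((l_i)_{n-j}\bigr)$ reduces to the Vandermonde in the $l_i$. It is nonzero because the $l_i$ are distinct, so dividing by it is legitimate.
\item The identity $[n]_\mu=\prod_{a\le n}(\mu_a+n-a)!/(n-a)!$ is exactly where $\ell(\mu)\le n$ is used.
\end{itemize}

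What your route buys is self-containment. It uses only ingredients the paper already states: the bialternant formula \eqref{schur determinant}, Definition \ref{shifted def}, and the Weyl dimension formula \eqref{Weyl dimension}. It also shows directly that the right-hand side is a finite sum. Every $\mu$ with $\mu_1>\lambda_1$ has $\mu_1+n-1>l_1\ge l_i$, so each $(l_i)_{\mu_1+n-1}$ vanishes. That kills a whole column of the numerator of $s^*_\mu(\lambda)$, so such terms contribute nothing.
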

We also need the following lemma, due to Dehaye \cite{kn:dehaye10p}.
\begin{lemma}[Lemma 3 of \cite{kn:dehaye10p}]\label{LemmaDehaye}
    Let $k$ and $N$ be positive whole numbers and let $\mu$ be any partition with length $\le k$. Then, we have \begin{equation}
        s^*_\mu \left(\{N\}^k\right)=(-1)^{|\mu|}\frac{[-N]_\mu\cdot [k]_\mu}{H_\mu}.
    \end{equation}
\end{lemma}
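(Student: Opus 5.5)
The plan is to read off the value of $s^*_\mu$ from the generalised binomial theorem (Theorem \ref{binom}). I will apply it to the rectangular partition $\lambda=\langle N^k\rangle$ in $n=k$ variables, and compare the result with an independent Schur expansion of the same function.

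First, with $n=k$, every variable is raised to the power $N$, so
\[
s_{\langle N^k\rangle}(x_1,\ldots,x_k)=(x_1\cdots x_k)^N .
\]
Hence $s_{\langle N^k\rangle}(\{1\}^k)=1$, and Theorem \ref{binom} gives
\[
\prod_{i=1}^k(1+t_i)^N=\sum_{\ell(\mu)\le k}\frac{s^*_\mu(\{N\}^k)}{[k]_\mu}\,s_\mu(t_1,\ldots,t_k).
\]

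Second, I expand the left-hand side in a different way. Write
\[
\prod_{i=1}^k(1+t_i)^N=\prod_{i=1}^k\prod_{j=1}^N(1+t_iy_j)\Big|_{y_1=\cdots=y_N=1}.
\]
The dual Cauchy identity \eqref{cauchydual}, with $p=k$ and $q=N$, then yields
\[
\prod_{i=1}^k(1+t_i)^N=\sum_{\mu\le\langle N^k\rangle}s_\mu(t_1,\ldots,t_k)\,s_{\mu'}(\{1\}^N).
\]
The Schur polynomials $s_\mu(t_1,\ldots,t_k)$ with $\ell(\mu)\le k$ are linearly independent, so I may compare coefficients in the two expansions. This gives
\[
s^*_\mu(\{N\}^k)=[k]_\mu\,s_{\mu'}(\{1\}^N) \quad\text{for } \mu\le\langle N^k\rangle,
\]
and $s^*_\mu(\{N\}^k)=0$ when $\mu_1>N$.

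Third, I evaluate $s_{\mu'}(\{1\}^N)$ with the hook formula \eqref{Hookeq}, which gives $[N]_{\mu'}/H_{\mu'}$, and I use $H_{\mu'}=H_\mu$. Reflecting the boxes in the diagonal turns the product for $[N]_{\mu'}$ into a product over $\mu$:
\[
[N]_{\mu'}=\prod_{(i,j)\in\mu}(N+i-j)=(-1)^{|\mu|}\prod_{(i,j)\in\mu}(-N+j-i)=(-1)^{|\mu|}[-N]_\mu .
\]
This produces the claimed formula $s^*_\mu(\{N\}^k)=(-1)^{|\mu|}[-N]_\mu[k]_\mu/H_\mu$.

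It remains to handle $\mu$ with $\mu_1>N$, where the formula must give zero. This holds because $[-N]_\mu$ contains the factor $-N+(N+1)-1=0$, coming from the box $(1,N+1)$.

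The main point needing care is the matching of coefficients. I must check that both expansions are over Schur polynomials in the same $k$ variables $t_1,\ldots,t_k$, so that the linear independence of $\{s_\mu\}_{\ell(\mu)\le k}$ applies. I must also keep the sign conventions consistent when converting $[N]_{\mu'}$ into $[-N]_\mu$; this uses the direct box-by-box computation above, not the identity $[x]_\lambda=(-1)^{|\lambda|}[x]_{\lambda'}$ stated earlier, which would lead to the wrong sign pattern.
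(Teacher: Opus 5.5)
Your proof is correct, and it necessarily takes a different route from the paper, because the paper gives no proof of this lemma. It imports the result from Dehaye (Lemma 3 of \cite{kn:dehaye10p}) and uses it as a black box in the proof of Theorem \ref{Theorem A}.

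Your argument fits the paper's toolkit well. You apply the same generalised binomial theorem (Theorem \ref{binom}) that drives the proof of Theorem \ref{Theorem A}, but with $n=k$ instead of $n=2k$. In that case $s_{\langle N^k\rangle}(x_1,\ldots,x_k)=(x_1\cdots x_k)^N$ collapses to a product. You then match coefficients against the dual Cauchy expansion \eqref{cauchydual} of $\prod_i(1+t_i)^N$ and finish with the hook-content formula \eqref{Hookeq}.

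The individual steps all hold:
\begin{itemize}
\item $[k]_\mu\neq 0$ whenever $\ell(\mu)\le k$.
\item The $s_\mu(t_1,\ldots,t_k)$ with $\ell(\mu)\le k$ form a basis, so comparing coefficients degree by degree is legitimate.
\item The reflection gives $[N]_{\mu'}=\prod_{(i,j)\in\mu}(N+i-j)=(-1)^{|\mu|}[-N]_\mu$.
\item The box $(1,N+1)$ disposes of the case $\mu_1>N$.
\end{itemize}

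Compared with the citation, your route makes the paper self-contained, since it uses only results already stated there. It also explains structurally why $s^*_\mu(\{N\}^k)$ vanishes outside the rectangle $\langle N^k\rangle$: this is just the truncation in the dual Cauchy identity. In the cited formula that vanishing is visible only through the factor $[-N]_\mu$.

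Your sign caution is justified. The identity $[x]_\lambda=(-1)^{|\lambda|}[x]_{\lambda'}$ printed in Subsection \ref{rep} already fails for $\lambda=(1)$. The correct statement is $[x]_\lambda=(-1)^{|\lambda|}[-x]_{\lambda'}$, which is exactly what your box-by-box computation produces.
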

\begin{proof}[Proof of Theorem \ref{Theorem A}]
    By Proposition \ref{prop modified moments}, 
    \[\mathcal{M}_N(k;\alpha)=\partial^\alpha s_{\langle N^k\rangle}(\{1\}^{2k})=\partial^\alpha s_{\langle N^k\rangle}(1+x_1,\ldots,1+x_{2k})\big|_{x_1=\cdots=x_{2k}=0}.\] We set $\lambda=(\lambda_1,\ldots,\lambda_{2k})=(\{N\}^k,\{0\}^k)$ so that
    \[s_\lambda(x_1,\ldots,x_{2k})=s_{\langle N^k\rangle}(x_1,\ldots,x_{2k}).\]
    After setting $n=2k$, we may apply Theorem \ref{binom} to obtain
    \begin{equation}\label{eq binom}
    \partial^\alpha s_{\langle N^k\rangle}(1+x_1,\ldots,1+x_{2k})\big|_{x_1=\cdots=x_{2k}=0}=s_{\langle N^k\rangle}(\{1\}^{2k})\sum_{\substack{\mu \\ l(\mu) \leq 2k}}
\frac{s^{*}_{\mu}(\lambda_1,\ldots,\lambda_{2k}) \,
      \partial^\alpha s_{\mu}(\{0\}^{2k})}
     {[2k\boxt{\mu}}.\end{equation}
     Observe that Lemma \ref{BG lemma} gives
     \[s_{\langle N^k\rangle}(\{1\}^{2k})=\int_{\mathrm{U}(N)}\left|\Lambda_U(1)\right|^{2k}\mathrm{d}U=|\Lambda|_N(k,0).\]
     Also, using the expansion of Schur polynomials into the monomials, Lemma \ref{Kostkalemma}, we have
     \[ \partial^\alpha s_{\mu}(\{0\}^{2k})=\alpha!K_{\mu\alpha}.\] We have used the fact that $\partial^\alpha m_\beta (\{0\}^{2k})=\alpha! \delta_{\beta\alpha}$, where
     \[\delta_{\beta\alpha}\coloneq \begin{cases}
         1 \quad \text{if } \beta=\alpha,\\
         0 \quad \text{otherwise.}
     \end{cases} \]Combining this with \eqref{eq binom} implies,
     \begin{equation}\label{eq modified}
        \mathcal{M}_N(k;\alpha)= \alpha!|\Lambda|_N(k,0)\sum_{\substack{\\ \mu\vdash_{2k}|\alpha|}}K_{\mu\alpha}\frac{ s^*_\mu(\lambda_1,\ldots,\lambda_{2k})}{ [2k]_\mu},
     \end{equation}
     where we may restrict to partitions of size $|\alpha|$ since otherwise $K_{\mu\alpha}=0$. In \cite{OkounkovOlshanski1}, Okounkov and Olshanski proved that shifted Schur functions are (like their standard counterparts) \emph{stable} in the sense that for any partition $\mu$ and for each $n$,
     \[s_\mu^*(x_1,\ldots,x_n,0)=s_\mu^*(x_1,\ldots,x_n).\] Therefore, by Lemma \ref{LemmaDehaye},
     \[s^*_\mu(\lambda_1,\ldots,\lambda_{2k})=  s^*_\mu \left(\{N\}^k\right)=(-1)^{|\mu|}\frac{[-N]_\mu\cdot [k]_\mu}{H_\mu}.\]
If $\ell(\mu)>k,$ then by definition, $s^*_\mu \left(\{N\}^k\right)=0$. Thus, we only sum over partitions $\mu\vdash_k~\!\!\!|\alpha|.$
\end{proof}
\begin{proof}[Proof of Corollary \ref{Cor A}]
    Observe that for a partition $\mu$ of size $|\alpha|$,
    \[[-N]_\mu=\prod_{i=1}^{\ell (\mu)}\prod_{j=1}^{\mu_i}(-N+j-i)\sim (-N)^{|\alpha|} ,\]
    as $N\to\infty$. Therefore, 
      \begin{align*}
         \mathcal{M}_N(k;\alpha)&=\alpha!(-1)^{|\alpha|}|\Lambda|_N(k,0)\sum_{\substack{\\ \mu\vdash_{k}|\alpha|}}K_{\mu\alpha}\frac{ [-N]_\mu[k]_\mu}{H_\mu [2k]_\mu}\\
         &\sim \alpha!|\Lambda|_N(k,0)N^{|\alpha|}\sum_{\substack{\\ \mu\vdash_{k}|\alpha|}}K_{\mu\alpha}\frac{ [k]_\mu}{H_\mu [2k]_\mu}.
     \end{align*}
     In \cite{kn:keasna00a}, Keating and Snaith proved that for $k\in\mathbb{N}$, 
     \begin{equation}\label{KS asymptotic}
     |\Lambda|_N(k,0)\sim N^{k^2}\prod_{j=0}^{k-1}\frac{j!}{(j+k)!},
     \end{equation}and thus, we conclude the result.
\end{proof}

\subsection{Basic Relations}\label{basic relations}
We now wish to prove Theorem \ref{Theorem 1} and Theorem \ref{Theorem 2}. In fact, in Proposition \ref{linear lambda} and Proposition \ref{linear Z}, we will prove something stronger. That is, the linear relationship between the different types of joint moments follows from the fact that $\frac{\Lambda_{U^\dagger}^{(n)}(1)}{\Lambda_{U^\dagger}(1)}$ may be written as a linear combination of $\frac{\Lambda_{U}^{(l)}(1)}{\Lambda_{U}(1)}$, for $l \le n$. As in \cite{kn:dehaye10,kn:dehaye10p}, we call these the `basic relations'. In particular, these will generalise the relation
\begin{equation}
    \begin{split}
        \left|\frac{\mathcal{Z}_U'(0)}{\mathcal{Z}_U{(0)}}\right|^2
        &=-\frac{N^2}{4}+N\frac{\Lambda'_U(1)}{{\Lambda}_U(1)}-\left(\frac{\Lambda'_U(1)}{{\Lambda}_U(1)}\right)^2,
    \end{split}
\end{equation}
to higher order derivatives.
\subsubsection{Proof of Theorem \ref{Theorem 1}.}
\begin{proposition}\label{linear lambda}
    Let $n$ be a non-negative integer. Then, for $U\in \mathrm{U}(N)$,
    \[\frac{\Lambda_{U^\dagger}^{(n)}(1)}{\Lambda_{U^\dagger}(1)}=\sum_{j=0}^n (-1)^j \binom{n}{j}\left(N\right)_{n-j}\sum_{l\le j}L(j,l)\frac{\Lambda^{(l)}_U(1)}{\Lambda_U(1)},\]
    where $L(j,l)=\binom{j-1}{l-1}\frac{j!}{l!}$ are the Lah numbers.
\end{proposition}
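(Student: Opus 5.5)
The plan is to first relate $\Lambda_{U^\dagger}$ to $\Lambda_U$ through an inversion $x\mapsto 1/x$, and then differentiate with the Leibniz rule and the chain rule.

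\emph{Step 1 (functional equation).} For $U\in\mathrm{U}(N)$ we have $U^\dagger=U^{-1}$. Hence
\[
\Lambda_{U^\dagger}(x)=\det(I-xU)=\det(-xU)\det(I-x^{-1}U^\dagger)=(-x)^N\det U\,\Lambda_U(1/x).
\]
Evaluating at $x=1$ gives $\Lambda_{U^\dagger}(1)=(-1)^N\det U\,\Lambda_U(1)$. Dividing the two identities, the constant $(-1)^N\det U$ cancels, and with $f(y)\coloneq\Lambda_U(y)/\Lambda_U(1)$ we obtain
\[
\frac{\Lambda_{U^\dagger}(x)}{\Lambda_{U^\dagger}(1)}=x^N f(1/x).
\]
This holds as an identity of rational functions in $x$, so it can be differentiated freely. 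The degenerate case $\Lambda_U(1)=0$ is excluded, since the quotients in the statement presuppose $\Lambda_U(1)\neq 0$.

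\emph{Step 2 (Leibniz).} Differentiate $n$ times and set $x=1$. Since $\frac{\mathrm{d}^{n-j}}{\mathrm{d}x^{n-j}}x^N\big|_{x=1}=(N)_{n-j}$, this gives
\[
\frac{\Lambda^{(n)}_{U^\dagger}(1)}{\Lambda_{U^\dagger}(1)}=\sum_{j=0}^n\binom{n}{j}(N)_{n-j}\,\frac{\mathrm{d}^j}{\mathrm{d}x^j}f(1/x)\Big|_{x=1}.
\]

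\emph{Step 3 (derivatives of $f(1/x)$; the only nontrivial step).} I would prove by induction on $j$ the closed form
\[
\frac{\mathrm{d}^j}{\mathrm{d}x^j}f(1/x)=(-1)^j\sum_{l\le j}L(j,l)\,x^{-j-l}f^{(l)}(1/x),
\]
with the conventions $L(0,0)=1$ and $L(j,0)=0$ for $j\ge1$.
\begin{itemize}
\item[] For the inductive step, differentiate the summand $x^{-j-l}f^{(l)}(1/x)$. This produces $-(j+l)x^{-j-l-1}f^{(l)}(1/x)-x^{-j-l-2}f^{(l+1)}(1/x)$.
\item[] Collecting the coefficient of $x^{-(j+1)-l}f^{(l)}(1/x)$ therefore gives $(-1)^{j+1}\bigl[(j+l)L(j,l)+L(j,l-1)\bigr]$.
\item[] It remains to check the Lah recurrence $L(j+1,l)=(j+l)L(j,l)+L(j,l-1)$ directly from the formula $L(j,l)=\binom{j-1}{l-1}\frac{j!}{l!}$. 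After factoring out $\frac{j!}{l!}$ this reduces to the routine binomial identity $(j+1)\binom{j}{l-1}=(j+l)\binom{j-1}{l-1}+l\binom{j-1}{l-2}$.
\end{itemize}
Setting $x=1$ in the closed form gives $\frac{\mathrm{d}^j}{\mathrm{d}x^j}f(1/x)\big|_{x=1}=(-1)^j\sum_{l\le j}L(j,l)\,\Lambda_U^{(l)}(1)/\Lambda_U(1)$.

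\emph{Step 4 (assembly).} Substituting the result of Step 3 into the sum from Step 2 yields exactly the stated formula.
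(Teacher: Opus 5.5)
Your proposal is correct and follows the paper's proof: the same identity $\Lambda_{U^\dagger}(x)/\Lambda_{U^\dagger}(1)=x^N\Lambda_U(1/x)/\Lambda_U(1)$, then the Leibniz rule, then the Lah recurrence $L(j+1,l)=(j+l)L(j,l)+L(j,l-1)$. The only difference is cosmetic: you induct directly on $\frac{\mathrm{d}^j}{\mathrm{d}x^j}f(1/x)$, whereas the paper states the equivalent operator identity $(u^2\frac{\mathrm{d}}{\mathrm{d}u})^j=\sum_l L(j,l)u^{j+l}\frac{\mathrm{d}^l}{\mathrm{d}u^l}$ with $u=1/x$, and you also verify the recurrence from the closed form, which the paper leaves to the reader.
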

\begin{proof}
    For every $x\in\mb{C}\setminus\{0\},$
    \[\frac{\Lambda_{U^\dagger}^{}(x)}{\Lambda_{U^\dagger}(1)}=\frac{\det(I-xU)}{\det(I-U)}=x^N\frac{\Lambda_U(x^{-1})}{\Lambda_U(1)}.\] Therefore, by the Leibniz rule,
    \[\frac{\Lambda_{U^\dagger}^{(n)}(1)}{\Lambda_{U^\dagger}(1)}=\sum_{j=0}^n\binom{n}{j}\left(N\right)_{n-j}\frac{\mathrm{d}^j}{\mathrm{d}x^j}\frac{\Lambda^{}_U(x^{-1})}{\Lambda_U(1)}\Big|_{x=1}.\]
    We set $u=1/x$, and thus $\frac{\mathrm{d}^j}{\mathrm{d}x^j}=(-u^2\frac{\mathrm{d}}{\mathrm{d}u})^j$. The Lah numbers satisfy the recursive formula
\[L(n+1,m)=(n+m)L(n,m)+L(n,m-1).\]
  Therefore, it is easy to verify that  
    \[\left(u^2\frac{\mathrm{d}}{\mathrm{d}u}\right)^j=\sum^j_{l=1}L(j,l)u^{j+l}\frac{\mathrm{d}^l}{\mathrm{d}u^l},\] 
    and hence, 
    \[\frac{\mathrm{d}^j}{\mathrm{d}x^j}\frac{\Lambda^{}_U(x^{-1})}{\Lambda_U(1)}\Big|_{x=1}=(-1)^j\sum_{l\le j}L(j,l)\frac{\Lambda^{(l)}_U(1)}{\Lambda_U(1)},\]
    which gives the result.
\end{proof}
\begin{proof}[Proof of Theorem \ref{Theorem 1}]
By Proposition \ref{linear lambda},
    \begin{equation}\label{eq linear lambda}        \begin{split}\prod_{i=1}^{\ell(\alpha)}\left|\frac{\Lambda_U^{(\alpha_i)}(1)}{\Lambda_U(1)}\right|^2&=\prod_{i=1}^{\ell(\alpha)}\frac{\Lambda_U^{(\alpha_i)}(1)}{\Lambda_U(1)}\frac{\Lambda_{U{^\dagger}}^{(\alpha_i)}(1)}{\Lambda_{U^\dagger}(1)}\\
    &=\prod_{i=1}^{\ell(\alpha)}\frac{\Lambda_U^{(\alpha_i)}(1)}{\Lambda_U(1)}\sum_{\substack{\beta_i\leq \alpha_i\\ \gamma_i\leq \beta_i}}(N)_{\alpha_i-\beta_i}(-1)^{\beta_i} \binom{\alpha_i}{\beta_i}L(\beta_i,\gamma_i)\frac{\Lambda_U^{(\gamma_i)}(1)}{\Lambda_U(1)}\\
        &=\sum_{\substack{\beta\leq \alpha\\ \gamma\leq \beta}}(N)_{\alpha-\beta}(-1)^{|\beta|} \binom{\alpha}{\beta}L(\beta,\gamma)\prod_{i=1}^{\ell(\alpha)}\left(\frac{\Lambda_U^{(\alpha_i)}(1)}{\Lambda_U(1)}\frac{\Lambda_U^{(\gamma_i)}(1)}{\Lambda_U(1)}\right).
        \end{split}
    \end{equation}
    Therefore, by the linearity of expectation, we have
    \[ {|\Lambda|_N(k;2\!\cdot\!\alpha)}=\sum_{\substack{\beta\leq \alpha\\ \gamma\leq \beta}}(N)_{\alpha-\beta}(-1)^{|\beta|} \binom{\alpha}{\beta}L(\beta,\gamma)\mathcal{M}_N(k;\alpha\!\cup\!\gamma),\]
    which proves the first part of Theorem \ref{Theorem 1}.
    
    For the large $N$ limit, since $(N)_{\alpha-\beta}=N^{|\alpha|-|\beta|}\left(1+O\left(\frac{1}{N}\right)\right),$ where the implied constant is dependent on $\alpha$ and $\beta$,
    \[   \frac{|\Lambda|_N(k;2\!\cdot\!\alpha)}{N^{k^2+2|\alpha|}}=\sum_{\substack{\beta\leq \alpha\\ \gamma\leq \beta}}(-1)^{|\beta|} \binom{\alpha}{\beta}L(\beta,\gamma) \frac{\mathcal{M}_N(k;\alpha\!\cup\!\gamma)}{{N^{k^2+|\alpha\cup\beta|}}}\left(1+O\left(\frac{1}{N}\right)\right).\]
    It has been proven in Corollary \ref{Cor A} that for any fixed weak composition $\delta$, the correct scaling in $N$ for $\mathcal{M}_N(k;\delta)$ is $N^{k^2+|\delta|}$. In particular, if $\gamma \lneq \beta$, then $|\gamma|\le |\beta|-1$, and therefore,
    \[\frac{\mathcal{M}_N(k;\alpha\!\cup\!\gamma)}{{N^{k^2+|\alpha\cup\beta|}}}=\frac{1}{N^{|\beta|-|\gamma|}}\frac{\mathcal{M}_N(k;\alpha\!\cup\!\gamma)}{{N^{k^2+|\alpha\!\cup\!\gamma|}}}=O\left(\frac{1}{N}\right).\]
    Hence, the weak compositions $\gamma\lneq\beta$ will not contribute to the large $N$ limit
    and we conclude
    \[ {|\Lambda|(k;2\!\cdot\!\alpha)}=\sum_{\substack{\beta\leq \alpha\\ }}(-1)^{|\beta|}\binom{\alpha}{\beta}\mathcal{M}(k;\alpha\!\cup\!\beta).\]
\end{proof}
\subsubsection{Proof of Theorem \ref{Theorem 2}.}
To prove Theorem \ref{Theorem 2}, we follow a similar method to the proof of Theorem \ref{Theorem 1}.
Recall from (\ref{Z-polynomial}), that we define
\[\mathcal{Z}_U(\theta)\coloneq A_U(\theta)\Lambda_U(e^{\mf{i}\theta}),\]
where $A_U(\theta)\coloneq e^{-\mf{i}N(\theta+\pi) /2}e^{\mf{i}\sum_{n=1}^N\theta_n/2}$, and that $\mathcal{Z}_U(\theta)\in\mathbb{R}$ for $\theta\in\mb{R}$.
\begin{proposition}\label{linear Z}
    Let $n$ be a non-negative integer. Then
   \[\frac{\mathcal{Z}_U^{(n)}(0)}{\mathcal{Z}_U(0)}=\mf{i}^n\sum_{j=0}^n \binom{n}{j}\left(\frac{-N}{2}\right)^{n-j}\sum_{l\le j}{j\brace l}
   \frac{\Lambda^{(l)}_U(1)}{\Lambda^{}_U(1)},\]
   where ${j\brace l}$ are the Stirling numbers of the second kind.
\end{proposition}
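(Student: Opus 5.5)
The plan is to mirror the proof of Proposition \ref{linear lambda}. We write $\mathcal{Z}_U(\theta)=A_U(\theta)\Lambda_U(e^{\mf{i}\theta})$ and note that $A_U(\theta)=A_U(0)e^{-\mf{i}N\theta/2}$. The constant $A_U(0)$ cancels in the ratio, so
\[\frac{\mathcal{Z}_U(\theta)}{\mathcal{Z}_U(0)}=e^{-\mf{i}N\theta/2}\,\frac{\Lambda_U(e^{\mf{i}\theta})}{\Lambda_U(1)}.\]
Differentiating $n$ times at $\theta=0$ with the Leibniz rule gives
\[\frac{\mathcal{Z}_U^{(n)}(0)}{\mathcal{Z}_U(0)}=\sum_{j=0}^n\binom{n}{j}\left(\frac{-\mf{i}N}{2}\right)^{n-j}\frac{\mathrm{d}^j}{\mathrm{d}\theta^j}\frac{\Lambda_U(e^{\mf{i}\theta})}{\Lambda_U(1)}\Big|_{\theta=0}.\]
Since $(-\mf{i}N/2)^{n-j}=\mf{i}^{n-j}(-N/2)^{n-j}$, it remains to show that the $j$th $\theta$-derivative equals $\mf{i}^j\sum_{l\le j}{j\brace l}\Lambda_U^{(l)}(1)/\Lambda_U(1)$. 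The powers of $\mf{i}$ then combine to the overall factor $\mf{i}^n$.

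For this, substitute $x=e^{\mf{i}\theta}$. Then $\frac{\mathrm{d}}{\mathrm{d}\theta}=\mf{i}x\frac{\mathrm{d}}{\mathrm{d}x}$, so $\frac{\mathrm{d}^j}{\mathrm{d}\theta^j}=\mf{i}^j\left(x\frac{\mathrm{d}}{\mathrm{d}x}\right)^j$. We then invoke the classical operator identity
\[\left(x\frac{\mathrm{d}}{\mathrm{d}x}\right)^j=\sum_{l=0}^j{j\brace l}x^l\frac{\mathrm{d}^l}{\mathrm{d}x^l}.\]
This is the analogue of the Lah identity used for Proposition \ref{linear lambda}. It follows by induction on $j$ from the recurrence ${j+1\brace l}=l{j\brace l}+{j\brace l-1}$: apply $x\frac{\mathrm{d}}{\mathrm{d}x}$ to $x^l\frac{\mathrm{d}^l}{\mathrm{d}x^l}$ to get $lx^l\frac{\mathrm{d}^l}{\mathrm{d}x^l}+x^{l+1}\frac{\mathrm{d}^{l+1}}{\mathrm{d}x^{l+1}}$, and collect terms. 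Evaluating at $x=1$ (that is, $\theta=0$) removes the powers $x^l$, which gives the claimed expression.

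There is no real obstacle; the argument is short. The places needing care are the bookkeeping of the powers of $\mf{i}$ and confirming that the phase factor $e^{\mf{i}\sum\theta_n/2}$ and $e^{-\mf{i}N\pi/2}$ do not depend on $\theta$, so they cancel in the ratio. We also note that ${0\brace 0}=1$ covers the $j=0$ term, and that the sum may include $l=0$ only when $j=0$.
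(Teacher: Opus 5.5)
Your proposal is correct and follows essentially the same route as the paper. You factor out the $\theta$-dependence of $A_U$ (the paper writes this as $A_U^{(m)}(0)=(-\mathfrak{i}N/2)^mA_U(0)$), apply the Leibniz rule, substitute $u=e^{\mathfrak{i}\theta}$ so that $\frac{\mathrm{d}}{\mathrm{d}\theta}=\mathfrak{i}u\frac{\mathrm{d}}{\mathrm{d}u}$, and expand $\left(u\frac{\mathrm{d}}{\mathrm{d}u}\right)^j$ in Stirling numbers of the second kind. Your inductive check of that operator identity via ${j+1\brace l}=l{j\brace l}+{j\brace l-1}$, and your tracking of the powers of $\mathfrak{i}$, supply details the paper leaves implicit.
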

\begin{proof}
   For each non-negative integer $m$, we have
    \[A_U^{(m)}(0)=\left(\frac{-\mf{i}N}{2}\right)^{m}A_U(0).\]
    Thus, by the Leibniz rule,
    \[{\mathcal{Z}_U^{(n)}(0)}{}=\sum_{j=0}^n \binom{n}{j}\left(\frac{-\mf{i}N}{2}\right)^{n-j}A_U(0)\frac{\mathrm{d}^j}{\mathrm{d}\theta^j}\Lambda^{}_U(e^{\mf{i}\theta})\Big|_{\theta=0}.\] 
    We divide by $\mathcal{Z}_U(0)=A_U(0)\Lambda_U(1)$ so that
     \[\frac{\mathcal{Z}_U^{(n)}(0)}{\mathcal{Z}_U(0)}=\sum_{j=0}^n \binom{n}{j}\left(\frac{-\mf{i}N}{2}\right)^{n-j}\frac{\frac{\mathrm{d}^j}{\mathrm{d}\theta^j}\Lambda^{}_U(e^{\mf{i}\theta})\big|_{\theta=0}}{\Lambda^{}_U(1)}.\] 
     We now proceed as in the proof of Proposition \ref{linear lambda}, except we use the substitution \(u=e^{\mf{i}\theta}\), so that  $\frac{\mathrm{d}}{\mathrm{d}\theta}=\mf{i}u\frac{\mathrm{d}}{\mathrm{d}u}$, and the relation 
     \[\left(u\frac{\mathrm{d}}{\mathrm{d}u}\right)^n=
\sum^n_{m=1}{n\brace m} u^{m}\frac{\mathrm{d^m}}{\mathrm{d}u^m}.\]
    
\end{proof}
\begin{proof}[Proof of Theorem \ref{Theorem 2}]
    By Proposition \ref{linear Z},
    \begin{equation}
    \begin{split}\prod_{i=1}^{\ell(\alpha)}\left|\frac{\mathcal{Z}_U^{(\alpha_i)}(0)}{\mathcal{Z}_U(0)}\right|^2&=\prod_{i=1}^{\ell(\alpha)}\left(\frac{\mathcal{Z}_U^{(\alpha_i)}(0)}{\mathcal{Z}_U(0)}\right)^2\\
    &=(-1)^{|\alpha|}\sum_{\substack{\beta\leq 2\cdot\alpha\\ \gamma\leq \beta}}\left(\frac{-N}{2}\right)^{2|\alpha|-|\beta|}\binom{2\!\cdot\!\alpha}{\beta}{\beta\brace \gamma}\prod_{i=1}^{\ell(\alpha)} \frac{\Lambda_U^{(\gamma_i)}(1)}{\Lambda_U(1)},
    \end{split}
    \end{equation}
    where the second equality follows from considering the expression as a product over the weak composition $2\!\cdot\!\alpha$. After taking expectations, we obtain
    \[ {|\mathcal{Z}|_N(k;2\!\cdot\!\alpha)}=(-1)^{|\alpha|}\sum_{\substack{\beta\leq 2\cdot\alpha\\ \gamma\leq \beta}}\left(\frac{-N}{2}\right)^{2|\alpha|-|\beta|}\binom{2\!\cdot\!\alpha}{\beta}{\beta\brace \gamma}\mathcal{M}_N(k;\gamma).\]
    which proves the first part. To prove that
    \[  {|\mathcal{Z}|(k;2\!\cdot\!\alpha)}=(-1)^{|\alpha|}\sum_{\substack{\beta\leq 2\cdot\alpha\\ }}\left(\frac{-1}{2}\right)^{2|\alpha|-|\beta|}\binom{2\!\cdot\!\alpha}{\beta}\mathcal{M}(k;\beta),\]
    we observe that terms in the sum for which $\gamma \lneq \beta$ vanish in the large $N$ limit after dividing both sides by $N^{k^2+2|\alpha|}$ and applying the same reasoning as used in the proof of Theorem \ref{Theorem 1}.
\end{proof}

\section{The Jacobi-Trudi formula and its applications}\label{section 3}

    The purpose of this section is to highlight some applications of the Jacobi-Trudi identity to averages of characteristic polynomials over the unitary group. The first application is a proof of Theorem \ref{Theorem B}, which is in turn used to reprove the formulas provided in \cite{kn:keawei1}. 
    
    The second application is to give a short proof of the contour integral formulation of averages of characteristic polynomials given in \cite{kn:crs06}, which is our Lemma \ref{contour products}. This lemma is often the starting point for moment calculations in random matrix theory over the unitary group. We prove that it arises naturally as a consequence of a contour integral expression for the complete homogeneous symmetric polynomials. 
    \begin{subsection}{Kostka numbers and magic squares: proof of Theorem \ref{Theorem B}}
        A polynomial $f\in \mb{Q}[t_1,\ldots ,t_k]$ is \emph{skew-symmetric} if \[f(t_{\sigma(1)},\ldots ,t_{\sigma(k)})=\operatorname{sgn}(\sigma)f(t_1,\ldots ,t_k),\]
        for all $\sigma\in S_k$.
We say that a function is a \emph{shifted symmetric polynomial} if it is symmetric with respect to the shifted variables 
\[t'_i\coloneq t_i+const.-i,\] for some arbitrary constant. Without loss of generality, we may take the constant to be zero.
A canonical example of a polynomial which is skew-symmetric in the shifted variables $t'_i=t_i-i$, is the shifted Vandermonde
\[\Delta'(t_1,\ldots ,t_n)\coloneq \prod_{i<j}(t_j-t_i+i-j).\]
We may also consider the \emph{field of fractions} of $\mb{Q}[t_1,\ldots t_k]$, given by
\[\mb{Q}(t_1,\ldots t_k)\coloneq \left\{\frac{f(t_1,\ldots,t_k)}{g(t_1,\ldots,t_k)} \ : \ f,g\in \mb{Q}[t_1,\ldots t_k], \ g\not=0 \right\}.\]

If a function $f\in\mb{Q}(t_1,\ldots t_k)$ is skew-symmetric in the shifted variables $t'_i$, then it vanishes whenever $t'_i=t'_j$. Therefore, we may write
\begin{equation}
    f(t_1,\ldots ,t_k)=\prod_{i<j}(t_j-t_i+i-j) \cdot g(t_1,\ldots ,t_k),
\end{equation}
where $g\in\mb{Q}(t_1,\ldots t_k)$ is symmetric in the shifted variables.

We will prove Theorem \ref{Theorem B} from Theorem \ref{Theorem A} as an application of the following lemma.

 \begin{lemma}\label{Lemma KostkaMagic} Let $n$ and $k$ be non-negative integers, which satisfy $k\le n$.
            Let $f\in\mb{Q}(t_1,\ldots ,t_k)$ be a skew-symmetric function in the \emph{shifted} variables $t'_i=t_i-i$. Furthermore, assume that $f$ has no poles at the points $(t_1,\ldots,t_k)\in\mb{N}_0^k$ that satisfy $\sum_{i=1}^{k}t_i=n$.  Then,
            \begin{eqnarray}
                \sum_{\mu\vdash_k{n}}K_{\mu\lambda} f(\mu_1,\ldots ,\mu_k)&=&\sum_{\substack{\sum_{i=1}^kt_i=n\\t_i\ge 0}}N_{\lambda,t}f(t_1,\ldots,t_k),
            \end{eqnarray}
               where the first sum is over partitions $\mu=(\mu_1,\ldots,\mu_k)$ and the second sum is over weak compositions $t=(t_1,\ldots ,t_k)$.
        \end{lemma}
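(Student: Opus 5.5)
The identity should follow from the Jacobi--Trudi identity (Theorem \ref{JacobiTrudi}), combined with the principle that a sum of a shifted skew-symmetric $f$ against coefficients can be symmetrised onto the alternant. First I will rewrite both sides in terms of monomial coefficients. By Lemma \ref{Kostkalemma}, $K_{\mu\lambda}$ is the coefficient of $x^\lambda$ (equivalently of $m_\lambda$) in $s_\mu(x_1,\ldots,x_n)$. Similarly, $N_{\lambda,t}$ is the coefficient of $x^\lambda$ in $h_t:=\prod_{i=1}^k h_{t_i}(x_1,\ldots,x_n)$, where $h_t$ is defined for a weak composition $t$, since $h_t$ is symmetric in the order of $t$. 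Since $\lambda$ is fixed, it suffices to prove the identity of symmetric polynomials in $x$
\[\sum_{\mu\vdash_k n} f(\mu)\, s_\mu(x)=\sum_{|t|=n,\,t_i\ge0} f(t)\, h_t(x),\]
and then extract the coefficient of $x^\lambda$. I use $k\le n$ throughout so that $s_\mu$ with $\ell(\mu)\le k$ is nonzero in $n$ variables, although this is not essential.

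Next I expand $s_\mu$ by Jacobi--Trudi as a $k\times k$ determinant. Padding $\mu$ with zeros to length $k$ is harmless, because the extra rows are unitriangular. This gives
\[s_\mu=\sum_{\sigma\in S_k}\operatorname{sgn}(\sigma)\prod_{i=1}^k h_{\mu_i-i+\sigma(i)},\]
with the convention $h_m=0$ for $m<0$. For each $\sigma$, set $t_i=\mu_i-i+\sigma(i)$, so that $t_i-i=\mu_i-i-(i-\sigma(i))$. Equivalently, $t'=(t_i-i)$ is obtained from $\mu'=(\mu_i-i)$ by a permuted shift. More cleanly, I reindex by $\tau=\sigma^{-1}$: the multiset of shifted values $\{t_{j}+\ldots\}$ should be a permutation of $\{\mu_i-i\}$ up to a reordering of the product factors. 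Concretely, $\prod_i h_{\mu_i-i+\sigma(i)}=\prod_j h_{\mu_{\sigma^{-1}(j)}-\sigma^{-1}(j)+j}$. Defining $t_j=\mu_{\sigma^{-1}(j)}-\sigma^{-1}(j)+j$ gives $t_j-j=\mu_{\sigma^{-1}(j)}-\sigma^{-1}(j)$. So $t'$ is exactly $\mu'$ permuted by $\sigma$, and shifted skew-symmetry gives $f(t)=\operatorname{sgn}(\sigma) f(\mu)$. Hence
\[\sum_\mu f(\mu)s_\mu=\sum_\mu\sum_\sigma f(t^{(\mu,\sigma)})\,h_{t^{(\mu,\sigma)}}.\]

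The remaining step is a bijection argument. The map $(\mu,\sigma)\mapsto t$ sends pairs with $t_j\ge0$ injectively to weak compositions of $n$, because the shifted entries $\mu_i-i$ are distinct. Every weak composition $t$ whose shifted entries $t_j-j$ are pairwise distinct arises exactly once, by sorting $t'$ decreasingly to recover $\mu'$; this $\mu$ is a genuine partition with at most $k$ parts, and $|\mu|=|t|=n$. Pairs for which some $t_j<0$ contribute $h_{t_j}=0$ and can be discarded. Conversely, weak compositions with a repeated shifted value $t_i-i=t_j-j$ are not hit, but there $f(t)=0$ by skew-symmetry. This vanishing needs $f$ to be finite at such points, which is exactly why the hypothesis that $f$ has no poles at points of $\mathbb N_0^k$ summing to $n$ is included. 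With this, the two sums agree term by term, and extracting the coefficient of $x^\lambda$ gives the lemma.

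I expect the bookkeeping in the reindexing step to be the main obstacle. Specifically, I need to check that after passing to $\sigma^{-1}$ the sign produced by skew-symmetry exactly cancels $\operatorname{sgn}(\sigma)$, and I need to handle the padded rows and the negative indices in Jacobi--Trudi carefully. I will verify the sign on the $k=2$ case before writing the general argument.
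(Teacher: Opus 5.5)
Your proposal is correct and follows essentially the paper's route: the paper first extracts the coefficient of $x^\lambda$ from the expanded Jacobi--Trudi determinant (Theorem~\ref{TheoremKostkaMagic}, $K_{\mu\lambda}=\sum_{\sigma\in S_k}\operatorname{sgn}(\sigma)N_{\sigma*\mu,\lambda}$). It then uses the same bijection between pairs $(\mu,\sigma)$ and weak compositions $t$ with distinct shifted parts $t_j-j$, with shifted skew-symmetry of $f$ absorbing $\operatorname{sgn}(\sigma)$ and killing the compositions with a repeated shifted part.

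The only thing to tidy is the order of steps. You should discard the pairs with some $t_j<0$ (where $h_{t_j}=0$) before rewriting $\operatorname{sgn}(\sigma)f(\mu)$ as $f(t)$, because $f$ is only assumed pole-free on points of $\mathbb{N}_0^k$ summing to $n$.
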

        To prove Lemma \ref{Lemma KostkaMagic}, we need the following theorem, which is known to experts (see, for example, \cite{kn:lederer}). It is equivalent to Theorem \ref{JacobiTrudi}, the Jacobi-Trudi formula.
        \begin{theorem}\label{TheoremKostkaMagic}
           Let $\lambda$ be a partition of length $k$ and $\beta$ be any weak composition. Then,
            \begin{equation}
                K_{\lambda \beta} =\sum_{\sigma\in S_k}\operatorname{sgn}(\sigma) N_{\sigma*\lambda,\beta},
            \end{equation}
            where for a partition $\lambda$ of length $k$ and a permutation $\sigma\in S_k$, we define
            \[\sigma*\lambda=\left(\lambda_1-1 +\sigma(1),\ldots ,\lambda_k-k+\sigma(k)\right).\]
        \end{theorem}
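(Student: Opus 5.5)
The plan is to extract the coefficient of a single monomial from both sides of the Jacobi–Trudi identity (Theorem \ref{JacobiTrudi}). Let $\beta=(\beta_1,\ldots,\beta_m)$ and work in $n=\max(m,k)$ variables $x_1,\ldots,x_n$, padding $\beta$ with zeros if necessary. We adopt the standard conventions $h_0=1$ and $h_r=0$ for $r<0$. Correspondingly, $N_{\gamma,\beta}=0$ whenever $\gamma$ has a negative entry, since no nonnegative integer matrix has a negative row sum.

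First expand the determinant in Theorem \ref{JacobiTrudi} by the Leibniz formula:
\[
s_\lambda(x_1,\ldots,x_n)=\sum_{\sigma\in S_k}\operatorname{sgn}(\sigma)\prod_{i=1}^k h_{\lambda_i-i+\sigma(i)}(x_1,\ldots,x_n)=\sum_{\sigma\in S_k}\operatorname{sgn}(\sigma)\,h_{\sigma*\lambda}(x_1,\ldots,x_n).
\]
Here $h_\gamma=\prod_i h_{\gamma_i}$ for a weak composition (or integer vector) $\gamma$. This product is zero if some $\gamma_i<0$ and is unaffected by entries $\gamma_i=0$. The identification of the indices uses only that the $(i,\sigma(i))$ entry is $h_{\lambda_i-i+\sigma(i)}$, which is precisely the $i$th part of $\sigma*\lambda$.

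Next compare coefficients of $x^\beta=x_1^{\beta_1}\cdots x_n^{\beta_n}$ on both sides.

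\textbf{Left side.} By Definition \ref{def schur}, the coefficient of $x^\beta$ in $s_\lambda$ is the number of semistandard tableaux of shape $\lambda$ containing exactly $\beta_j$ entries equal to $j$. By Definition \ref{def kostka} this number is $K_{\lambda\beta}$, and this holds directly for weak compositions. We therefore avoid Lemma \ref{Kostkalemma}, which is stated only for partitions.

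\textbf{Right side.} Fix $\gamma=\sigma*\lambda$ with all $\gamma_i\ge 0$. A monomial in $\prod_i h_{\gamma_i}$ arises by choosing, from each factor $h_{\gamma_i}$, a monomial $x_1^{a_{i1}}\cdots x_n^{a_{in}}$ with $\sum_j a_{ij}=\gamma_i$. Each such choice occurs exactly once, because $h_r$ is the sum of all degree-$r$ monomials, each with coefficient $1$. The product equals $x^\beta$ exactly when $\sum_i a_{ij}=\beta_j$ for all $j$. Hence the coefficient of $x^\beta$ is the number of $k\times n$ nonnegative integer matrices with row sums $\gamma$ and column sums $\beta$. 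Deleting the zero columns beyond $m$, this is $N_{\gamma,\beta}$. When $\gamma$ has a negative entry, both $h_\gamma$ and $N_{\gamma,\beta}$ vanish.

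Summing over $\sigma$ gives $K_{\lambda\beta}=\sum_\sigma\operatorname{sgn}(\sigma)N_{\sigma*\lambda,\beta}$.

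The only delicate point is bookkeeping rather than mathematics: the conventions for negative and zero indices must match on both sides. We also need enough variables ($n\ge m$) so that the coefficient of $x^\beta$ is faithfully recorded. Finally, Theorem \ref{JacobiTrudi} must be applied with a $k\times k$ determinant, which is valid since $\ell(\lambda)=k\le n$.
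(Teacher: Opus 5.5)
Your proof is correct and follows the paper's argument: expand the Jacobi--Trudi determinant by the Leibniz formula to get $s_\lambda=\sum_{\sigma}\operatorname{sgn}(\sigma)h_{\sigma*\lambda}$, then compare the coefficient of each monomial on both sides. Your bookkeeping is slightly tighter than the paper's. The paper works in only $k$ variables and cites Lemma \ref{Kostkalemma}, which is stated for partitions; by using $n=\max(m,k)$ variables and reading the coefficient of $x^\beta$ directly from Definition \ref{def schur}, you cover weak compositions $\beta$ of length greater than $k$, which is needed later because $\ell(\alpha)$ can be as large as $2k$.
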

        \begin{proof}
       By Theorem \ref{JacobiTrudi},
            \[\begin{split}
            s_\lambda(x_1,\ldots ,x_k)&=\det_{k\times k}(h_{\lambda_i-i+j}(x_1,\ldots ,x_k))\\
            &=\sum_{\sigma\in S_k}\operatorname{sgn}(\sigma)\prod_{i=1}^kh_{\lambda_i-i+\sigma(i)}(x_1,\ldots ,x_k)=\sum_{\sigma\in S_k}\operatorname{sgn}(\sigma) h_{\sigma*\lambda}(x_1,\ldots ,x_k).
            \end{split}\] By comparing the expansions given in Lemma \ref{Kostkalemma} of the Schur polynomials and the complete homogeneous symmetric polynomials into monomials, we conclude
            \[  K_{\lambda \beta} =\sum_{\sigma\in S_k}\operatorname{sgn}(\sigma) N_{\sigma*\lambda,\beta.}\]
        \end{proof}
        We will now prove Lemma \ref{Lemma KostkaMagic}.
        \begin{proof}[Proof of Lemma \ref{Lemma KostkaMagic}]
            We first observe that, since $f$ is skew-symmetric, the only terms in the sum,
            \[\sum_{\substack{\sum_{i=1}^kt_i=n\\t_i\ge 0}}N_{t,\lambda}f(t_1,\ldots,t_k),\] which contribute are those for which $\{t_1-1,\ldots ,t_k-k\}$ are all distinct. Thus, there exists a unique $\sigma\in S_k$ such that
            \[t_{\sigma(1)}-\sigma(1)>\cdots>t_{\sigma(k)}-\sigma(k)\ge -k.\] Therefore, after setting $\mu_i=t_{\sigma(i)}-\sigma(i)+i$, we obtain a partition $\mu\vdash_kn$. Since $\sigma$ and $\mu$ were uniquely determined by $t=(t_1,\ldots ,t_k)$, we have defined an injective mapping:
            \[\{t=(t_1,\ldots ,t_k):|t|=n \ \text{and} \ t_i-i\not=t_j-j \text{ for } i\not=j\}\hookrightarrow (S_k,\mathrm{P}_{n,k}),\]
            where $\mathrm{P}_{n,k}$ denotes the set of partitions of size $n$ and length $\le k.$
            Moreover, the inverse of this mapping is given by 
            \[(\sigma,\mu)\mapsto t=(\mu_{\sigma^{-1}(1)}-\sigma^{-1}(1) + 1,\ldots ,\mu_{\sigma^{-1}(k)}-\sigma^{-1}(k)+k),\] which is a well-defined weak composition as long as $\mu_{\sigma^{-1}(i)}-\sigma^{-1}(i) + i\ge 0$ for all $1\leq i \leq k$. Therefore, we have
            \begin{eqnarray}
                &&\sum_{\substack{\sum_{i=1}^kt_i=n\\t_i\ge 0}}N_{t,\lambda}f(t_1,\ldots,t_k)\notag\\
                &&=\sum_{\mu\vdash_k n}\sum_{\substack{\sigma\in S_k \\ \mu_i\ge i-\sigma(i)}}N_{\sigma*\mu,\lambda}f(\mu_{\sigma^{-1}(1)}-\sigma^{-1}(1) + 1,\ldots ,\mu_{\sigma^{-1}(k)}-\sigma^{-1}(k)+k)\notag\\
                &&=\sum_{\mu\vdash_k n}\left(\sum_{\substack{\sigma\in S_k \\ \mu_i\ge i-\sigma(i)}}\operatorname{sgn}(\sigma)N_{\sigma*\mu,\lambda}\right)f(\mu_1,\ldots ,\mu_k),
            \end{eqnarray}
            where we have used the fact that $f$ is skew-symmetric in the shifted variables for the second equality. 
            
            Clearly, $N_{\sigma*\mu,\lambda}$ is zero if $\mu_i-i+\sigma(i)<0$, and thus the restriction is redundant. In particular, by Theorem \ref{TheoremKostkaMagic},
            \[\sum_{\substack{\sigma\in S_k \\ \mu_i\ge i-\sigma(i)}}\operatorname{sgn}(\sigma)N_{\sigma*\mu,\lambda}=\sum_{\substack{\sigma\in S_k \\ }}\operatorname{sgn}(\sigma)N_{\sigma*\mu,\lambda}=K_{\mu\lambda}.\] \end{proof}
            \begin{proof}[Proof of Theorem \ref{Theorem B}]
            We recall from \eqref{Weyl dimension} and \eqref{Hookeq} that we have the relation
            \[s_\mu(\{1\}^k)=\frac{\prod_{i<j}(\mu_i-\mu_j-i+j)}{\prod_{i}(n-i)!}=\frac{[k]_\mu}{H_\mu}.\]
                Therefore, we may rewrite Theorem \ref{Theorem A} as:
                \begin{equation}
                    \mathcal{M}_N(k;\alpha)=\alpha!(-1)^{|\alpha|}|\Lambda|_N(k,0)\sum_{\substack{\\ \mu\vdash_{k}|\alpha|}}K_{\mu\alpha}\frac{ [-N]_\mu}{[2k]_\mu}\frac{\prod_{i<j}(\mu_i-\mu_j+j-i)}{\prod_{i=1}^k(k-i)!}.
                \end{equation}
                Furthermore, for all $x\in\mb{C}$,
\[[x]_\mu=\prod_{i=1}^{\ell(\mu)}\frac{(\mu_i+x-i)!}{(x-i)!},\]
is symmetric in the shifted variables $\mu_i-i$. We set
\[f(t_1,\ldots ,t_k)=\frac{ [-N]_t}{[2k]_t}\frac{\prod_{i<j}(t_i-t_j+j-i)}{\prod_{i=1}^k(k-i)!}\in\mb{Q}(t_1,\ldots,t_k),\] and observe that $[2k]_t$ is never zero for all weak compositions $t=(t_1,\ldots,t_k)$ of length $\le k$. Hence,
we may apply Lemma \ref{Lemma KostkaMagic} to obtain Theorem \ref{Theorem B}.
            \end{proof}
\begin{proof}[Proof of Corollary \ref{Cor B}]
    Following the same reasoning as above, but with Lemma \ref{Lemma KostkaMagic} applied to Corollary \ref{Cor A}, we have
    \begin{equation}\label{3.6}
        \mathcal{M}(k;\alpha)
         =\alpha!|\Lambda|(k,0)\sum_{\substack{\mf{m}=(m_1,\ldots ,m_k)\\ \sum_{i=1}^km_i=|\alpha|}}N_{\alpha,\mathfrak{m}}\prod_{i=1}^k\frac{(2k-i)!}{(k-i)!}\frac{\prod_{1\le i<j\le k}(m_i-m_j+j-i)}{\prod_{i=1}^k(m_i+2k-i)!}.
    \end{equation}
    We recall the asymptotic relation in \eqref{KS asymptotic}, due to Keating and Snaith \cite{kn:keasna00a}, which states that
    \[|\Lambda|(k,0)=\prod_{i=0}^{k-1}\frac{i!}{(i+k)!}=\prod_{i=1}^k\frac{(k-i)!}{(2k-i)!}.\] After cancellation of these factors in \eqref{3.6}, we have
    \[  \mathcal{M}(k;\alpha)
         =\alpha!\sum_{\substack{\mf{m}=(m_1,\ldots ,m_k)\\ \sum_{i=1}^km_i=|\alpha|}}N_{\alpha,\mf{m}}\frac{\prod_{1\le i<j\le k}(m_i-m_j+j-i)}{\prod_{i=1}^k(m_i+2k-i)!},\]
         as desired.
\end{proof}
    \end{subsection} 
\subsubsection{Comparison with Keating-Wei}\label{section compare}
As mentioned in Subsection \ref{unifying section}, Keating and Wei \cite{kn:keawei1}, provided two alternative formulations for $|\mathcal{Z}|(k;2\cdot\alpha)$, where $\alpha$ is a partition of at most two distinct parts. The first type (see Theorems 1.6 and 3.7 in \cite{kn:keawei1}) is the more computationally efficient expression.

The following theorem is a generalisation of these results in that they are valid for all partitions $\alpha$ of length $\le k$. Moreover, the Keating-Wei formulas follow immediately. The theorem is obtained by writing $N_{\beta,\mf{m}}$ as a sum over matrices with non-negative integer entries.
\begin{theorem}\label{Keating-Wei generalisation} Let $\alpha$ be a partition of length $h\le k$. Then
    \begin{equation}
    \begin{split}
         {|\mathcal{Z}|(k;2\!\cdot\!\alpha)}=(-1)^{|\alpha|}(\alpha!)^2\sum_{\substack{\sum_{l=1}^k m_{q,l}\le2\cdot\alpha_q\\ q=1,\ldots ,2h}}&\left(\frac{-1}{2}\right)^{2|\alpha|-\sum_{q=1}^{2h}\sum_{l=1}^k m_{q,l}}\prod_{q=1}^{2h}\frac{1}{(2\!\cdot\!\alpha_q-\sum_{l=1}^k m_{q,l})!}\\
         &\times\frac{\prod_{1\le i<j\le k}(\sum_{q=1}^{2h}m_{q,i}-\sum_{q=1}^{2h}m_{q,j}+j-i)}{\prod_{i=1}^k(\sum_{q=1}^{2h}m_{q,i}+2k-i)!},
         \end{split}
    \end{equation}    
    where $2\!\cdot\!\alpha_q$ is the $q$th part of the partition $2\!\cdot\!\alpha$.
\end{theorem}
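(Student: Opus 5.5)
The plan is to start from the already-combined formula \eqref{conj2 eq}, which came from \eqref{asymp 1.2} of Theorem \ref{Theorem 2} together with Corollary \ref{Cor B}, and to unfold the matrix count $N_{\beta,\mathfrak{m}}$ into an explicit sum over matrices. Write $2\!\cdot\!\alpha=(a_1,\ldots,a_{2h})$ with $a_q=2\!\cdot\!\alpha_q$. In \eqref{conj2 eq} the outer sum runs over weak compositions $\beta=(\beta_1,\ldots,\beta_{2h})$ with $\beta\le 2\!\cdot\!\alpha$. The inner sum runs over $\mathfrak{m}=(m_1,\ldots,m_k)$ with $|\mathfrak{m}|=|\beta|$, weighted by $N_{\beta,\mathfrak{m}}$.

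The key step is the observation that, by definition, $N_{\beta,\mathfrak{m}}$ is the number of $2h\times k$ nonnegative integer matrices $M=(m_{q,l})$ with row sums $\beta_q=\sum_l m_{q,l}$ and column sums $m_i=\sum_q m_{q,i}$. The double sum $\sum_{\beta}\sum_{\mathfrak{m}}N_{\beta,\mathfrak{m}}F(\beta,\mathfrak{m})$ therefore equals the single sum $\sum_M F(\operatorname{row}(M),\operatorname{col}(M))$ over all such matrices. The constraint $\beta\le 2\!\cdot\!\alpha$ becomes $\sum_l m_{q,l}\le a_q$ for each $q$. The constraint $|\mathfrak{m}|=|\beta|$ is then automatic, since both equal the total sum of the entries of $M$.

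Next I will substitute into each factor of \eqref{conj2 eq}:
\begin{itemize}
\item $|\beta|=\sum_{q,l}m_{q,l}$ in the power of $-1/2$;
\item $(2\!\cdot\!\alpha-\beta)!=\prod_q\bigl(a_q-\sum_l m_{q,l}\bigr)!$;
\item $m_i=\sum_q m_{q,i}$ in both the shifted Vandermonde numerator and the factorial denominator.
\end{itemize}
This yields exactly the stated expression, with the prefactor $(-1)^{|\alpha|}(\alpha!)^2$ carried over.

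The main obstacle is not conceptual but a matter of bookkeeping the prefactor. Going from \eqref{asymp 1.2} to \eqref{conj2 eq} uses $\binom{2\cdot\alpha}{\beta}\beta!=\frac{(2\cdot\alpha)!}{(2\cdot\alpha-\beta)!}$, where the $\beta!$ comes from the $\alpha!$ factor in Corollary \ref{Cor B} applied to $\mathcal{M}(k;\beta)$. It also uses $(2\!\cdot\!\alpha)!=(\alpha!)^2$, and I will check this identity carefully. A second point is that $\beta$ is only a weak composition rather than a partition. Corollary \ref{Cor B} and $N_{\beta,\mathfrak{m}}$ are invariant under permuting the parts of $\beta$, and $\mathcal{M}(k;\beta)$ depends only on the multiset of derivative orders, so this is harmless. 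I will also note that zero parts contribute $0!=1$, and zero rows of $M$ are allowed. Finally, the Keating–Wei formulas follow by specialising $\alpha=\langle n_1^{m},n_2^{k-m}\rangle$.
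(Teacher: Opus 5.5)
Your proposal is correct and follows the paper's proof essentially step for step. The paper likewise combines the asymptotic form of Theorem \ref{Theorem 2} with Corollary \ref{Cor B}, using $\binom{2\cdot\alpha}{\beta}\beta!=(\alpha!)^2/(2\!\cdot\!\alpha-\beta)!$; it then writes $N_{\beta,\mathfrak{m}}$ as a sum over $2h\times k$ nonnegative integer matrices with row sums $\beta_q$ and interchanges the order of summation. Your remarks on weak compositions, zero rows and $(2\!\cdot\!\alpha)!=(\alpha!)^2$ only make explicit points that the paper leaves implicit.
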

\begin{proof}
Theorem \ref{Theorem 2} allows us to express $ {|\mathcal{Z}|(k;2\!\cdot\!\alpha)}$ in terms of the averages $ {\mathcal{M}(k;\beta)}$, where $\beta\le 2\!\cdot\!\alpha$. Combining this with Corollary \ref{Cor B}, gives the explicit relation
    \begin{equation}\label{4.8}
        \begin{split}
              {|\mathcal{Z}|(k;2\!\cdot\!\alpha)}&=(-1)^{|\alpha|}\sum_{\substack{\beta\leq 2\cdot\alpha\\ }}\left(\frac{-1}{2}\right)^{2|\alpha|-|\beta|}\frac{(\alpha!)^2}{(2\!\cdot\!\alpha-\beta)!}\\
              &\times\sum_{\substack{\mf{m}=(m_1,\ldots ,m_k)\\ }}N_{\beta,\mf{m}}\frac{\prod_{1\le i<j\le k}(m_i-m_j+j-i)}{\prod_{i=1}^k(m_i+2k-i)!}.
        \end{split}
    \end{equation}
 By writing $N_{\beta,\mf{m}}$ as a sum over $2h\times k$ matrices $(m_{i,j})$, we obtain
 \begin{equation}\nonumber
     \begin{split}
         &\sum_{\substack{\mf{m}=(m_1,\ldots ,m_k)\\ }}N_{\beta,m}\frac{\prod_{1\le i<j\le k}(m_i-m_j+j-i)}{\prod_{i=1}^k(m_i+2k-i)!}\\
         &=\sum_{\substack{\sum_{l=1}^k m_{q,l}=\beta_q\\ q=1,\ldots ,2h}}\frac{\prod_{1\le i<j\le k}(\sum_{q=1}^{2h}m_{q,i}-\sum_{q=1}^{2h}m_{q,j}+j-i)}{\prod_{i=1}^k(\sum_{q=1}^{2h}m_{q,i}+2k-i)!}.
     \end{split}
 \end{equation}
 Substituting this into \eqref{4.8} and changing the order of summation gives the result.
\end{proof}
The alternative formulas that Keating and Wei provide (Theorems 1.2 and 3.4 in \cite{kn:keawei1}) are sums over determinants of matrices whose entries involve
the modified Bessel functions of the first kind. The interest in  determinants of such matrices is that they can be used to build connections between
joint moments and Painlev\'e transcendents. This link is established in \cite{kn:keawei2}.

Their formulas involving the modified Bessel functions of the first kind are much longer. Unfortunately, it is unclear how one directly shows that these formulas are equivalent to Theorem \ref{Keating-Wei generalisation}. In the simplest (non-trivial) case, we have $\alpha=\langle1^h\rangle$ and we recover Theorem \ref{crs 1}. The next case we consider is when $\alpha=\langle2^h\rangle$, and so
\[  |\mathcal{Z}|(k;2\!\cdot\!\alpha)=\lim_{N\to\infty}\frac{1}{N^{k^2+4h}}\int_{\mathrm{U}(N)}\left|\mathcal{Z}_U(0)\right|^{2k-2h}\left|\mathcal{Z}_U''(0)\right|^{2h}\mathrm{d}U.\]
The following theorem uses Theorem \ref{Keating-Wei generalisation} to express $|\mathcal{Z}|(k;2\!\cdot\!\alpha)$ in terms of the modified Bessel function, where $\alpha=\langle2^h\rangle$.
\begin{theorem}[Theorem 1.4 of \cite{kn:keawei1}]\label{KW Theorem 4}
   Let $h,k\in\mb{N}_0$ with $h\le k$ and set $\alpha=\langle2^{h}\rangle$. Then,
   \begin{equation}\label{KW1}
       \begin{split}
          |\mathcal{Z}|(k;2\!\cdot\!\alpha)=(-1)^{\frac{k(k-1)}{2}}\sum_{l=0}^{2h} \binom{2h}{l}\left(\frac{\mathrm{d}}{\mathrm{d}x}\right)^{4h-2l}\left(e^{-\frac{x}{2}}x^{-l-\frac{k^2}{2}}g_l(x)\right)\Bigg|_{x=0},
       \end{split}
   \end{equation}
   where
   \[g_l(x)\coloneq \sum_{\substack{\sum_{i=1}^kl_i=l\\ l_i\ge 0}}\binom{l}{l_1,\ldots , l_k}\det(I_{i+j+1+2l_{j+1}}(2\sqrt{x}))_{i,j=0,\ldots ,k-1}.\]
   Moreover,
    \begin{equation}\label{KW2}
\begin{split}
|\mathcal{Z}|(k;2\!\cdot\!\alpha) = \ &  \sum_{\substack{l_0 + \dots + l_k = 2h \\ l_i\ge0}} \sum_{\substack{t_0 + \dots + t_k = 2l_0 \\ t_i\ge0}} (-2)^{-t_0} \binom{2h}{l_0, \dots, l_k} \binom{2l_0}{t_0, \dots, t_k} \\
& \times \left( \prod_{i=1}^{k} \frac{1}{(2k - i + 2l_i + t_i)!} \right) \left( \prod_{1 \le i < j \le k} (2l_i - 2l_j + t_i - t_j - i + j) \right).
\end{split}
\end{equation}
\end{theorem}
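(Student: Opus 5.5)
The plan is to specialise Theorem \ref{Keating-Wei generalisation} to $\alpha=\langle 2^h\rangle$ and reorganise the resulting matrix sum into the two stated forms. Here $2\!\cdot\!\alpha=\langle 2^{2h}\rangle$, so $\alpha!=2^h$ and each row sum is constrained by $\sum_l m_{q,l}\le 2$. I will take \eqref{KW2} first, since it is purely combinatorial.

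\textbf{Proof of \eqref{KW2}.} The cleanest route is to go back to \eqref{4.8}, i.e.\ Theorem \ref{Theorem 2} combined with Corollary \ref{Cor B}.

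1. The weak compositions $\beta\le 2\!\cdot\!\alpha$ have $2h$ entries, each in $\{0,1,2\}$. Let $l_0$ be the number of entries equal to $1$ and $j$ the number equal to $2$, so $\beta$ is a permutation of $\langle 2^{j}1^{l_0}\rangle$ padded with zeros. The coefficient $\binom{2\cdot\alpha}{\beta}$ equals $2^{l_0}$.

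2. I then expand $N_{\beta,\mf m}$ row by row. A row of sum $2$ either puts $2$ in a single column or $1$ in each of two columns. A row of sum $1$ puts a $1$ in a single column.

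3. Collecting terms, I want to match \eqref{KW2} with the following interpretation. The $2h$ rows of $2\!\cdot\!\alpha$ are distributed among $k+1$ classes. Class $i\ge1$ has $l_i$ rows, each contributing $2$ to column $i$, which produces the $2l_i$. Class $0$ has $l_0$ rows, whose total of $2l_0$ units is spread as $t_0$ "missing" units (weighted by $-1/2$) and $t_i$ units to column $i$. Each row carries the factor $\left(\frac{d}{dx}\right)^2$ of $e^{-x/2}\cdot(\text{stuff})$, which is why $\binom{2l_0}{t_0,\dots,t_k}$ and $(-2)^{-t_0}$ appear.

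Concretely, I expect the identity to come out most directly as follows. Write $\mathcal{M}(k;\beta)$ from Corollary \ref{Cor B} as the coefficient extraction
\[
\mathcal{M}(k;\beta)=\beta!\,[x^\beta]\sum_{\mf m}\prod_q\Bigl(\sum_l y_l\Bigr)^{\beta_q}\text{-weights}.
\]
Then sum over $\beta$ with the binomial weights $\binom{2}{\beta_q}(-\tfrac12)^{2-\beta_q}$ per row. The per-row generating factor is $\bigl(-\tfrac12+\sum_l y_l\bigr)^2$, evaluated against the functional $y^{\mf m}\mapsto \Delta'(\mf m)/\prod_i(m_i+2k-i)!$. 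Expanding $\bigl(-\tfrac12+\sum_l y_l\bigr)^2$ naively gives a multinomial over $k+1$ symbols per row, not the $(l,t)$ structure.

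So I will instead split each squared factor as
\[
\Bigl(\tfrac{d^2}{d\theta^2}\Bigr)\text{-part}\;+\;\Bigl(\tfrac{d}{d\theta}\Bigr)^{2}\text{-part},
\]
reflecting $\mathcal{Z}''/\mathcal{Z}$. By Proposition \ref{linear Z}, $\mathcal{Z}''/\mathcal{Z}=-\bigl(N^2/4-N\Lambda'/\Lambda+\Lambda'/\Lambda+\Lambda''/\Lambda\bigr)$. At leading order this becomes $-\bigl(-\tfrac12+y\bigr)^2$ if $\Lambda''/\Lambda\sim y^2$ at leading order in the functional, with $y=\sum_l y_l$ and $y_l^2$ representing second derivatives in column $l$.

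The squared factor $\bigl((-\tfrac12+y)^2\bigr)^2$ then splits as follows. Choose $l_i$ of the $2h$ factors to be the diagonal $y_i^2$ terms, and let the remaining $l_0$ factors together contribute $(-\tfrac12+\sum y_i)^{2l_0}$ expanded multinomially into $t_0,\dots,t_k$. This needs justifying and is where the identity is subtle: the diagonal/off-diagonal split must be arranged so that the diagonal terms appear with multinomial $\binom{2h}{l_0,\dots,l_k}$. I expect this to be achievable because $\Delta'$ is antisymmetric, so the relevant monomial evaluations reproduce the functional exactly. The global sign $(-1)^{|\alpha|}(\alpha!)^2$ should cancel against these constants.

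\textbf{Proof of \eqref{KW1}.} I will convert \eqref{KW2} by the same mechanism that takes \eqref{eq crs 2} to Theorem \ref{crs 1}, running the argument of \cite{kn:crs06} backwards. The key steps are:

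1. Expand $\det\bigl(I_{i+j+1+2l_{j+1}}(2\sqrt{x})\bigr)$ using the series for $I_n$, which writes $x^{-k^2/2-l}\det(\cdots)$ as a power series.

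2. Extract the coefficient of $x^{2h-l}$, identifying the determinant expansion with $\sum_{\mf t}\Delta'/\prod(\dots)!$ via the Cauchy–Binet/Vandermonde evaluation. The $t_i$ play the role of the Bessel summation indices.

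3. Apply Leibniz with $e^{-x/2}$, which supplies the $(-2)^{-t_0}$ terms.

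The main obstacle is the bookkeeping in step 2. Matching $\det(I_{i+j+1+2l_{j+1}})$, symmetrised over column shifts, to the alternant $\prod_{i<j}(2l_i-2l_j+t_i-t_j-i+j)$ with the correct sign $(-1)^{k(k-1)/2}$ requires reversing row order. I would first settle the case $h=0$ as a sanity check against \eqref{KS moments 1}.
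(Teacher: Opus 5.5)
\textbf{There is a genuine gap in your derivation of \eqref{KW2}.} You start from the same place as the paper's Proposition \ref{appendix prop}: equation \eqref{4.8}, i.e.\ Theorem \ref{Theorem 2} combined with Corollary \ref{Cor B}. However, the decisive step is missing, and the claim you put in its place is false.

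Combine three ingredients: the weights $\binom{2}{\beta_q}(-\tfrac12)^{2-\beta_q}$, the factor $\beta!$ from Corollary \ref{Cor B}, and $N_{\beta,\mathfrak m}=[y^{\mathfrak m}]\prod_q h_{\beta_q}(y)$. Then each of the $2h$ parts of $2\!\cdot\!\alpha$ contributes
\[
\sum_{b=0}^{2}\binom{2}{b}\,b!\,\Bigl(-\tfrac12\Bigr)^{2-b}h_b(y)=\tfrac14-h_1(y)+2h_2(y),
\]
not $(-\tfrac12+\sum_l y_l)^2$. Equivalently, at leading order $\Lambda''/\Lambda$ corresponds to $2h_2(y)$, not to $y^2$.

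The identity you need is $2h_2(y)=(\sum_i y_i)^2+\sum_i y_i^2$. This is just your own step-2 observation (a row of sum $2$ is either a single $2$ or two $1$'s) multiplied by $\beta_q!=2$. So the per-part factor is $(\tfrac12-\sum_i y_i)^2+\sum_i y_i^2$. Expand its $2h$-th power multinomially: take $l_0$ copies of the first summand and $l_i$ copies of $y_i^2$, then expand $(\tfrac12-\sum_i y_i)^{2l_0}$. This gives $\binom{2h}{l_0,\ldots,l_k}\binom{2l_0}{t_0,\ldots,t_k}(-2)^{-t_0}\prod_i y_i^{2l_i+t_i}$. Pairing with $\prod_{i<j}(m_i-m_j+j-i)/\prod_i(m_i+2k-i)!$ at $m_i=2l_i+t_i$ is exactly \eqref{KW2}. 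Note that $(-1)^{|\alpha|}=1$, and the constants $(\alpha!)^2/(2\!\cdot\!\alpha-\beta)!$ are already absorbed into the per-part factor, so nothing is left to cancel. This is the paper's appendix argument.

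\textbf{Why your split does not work as written.} Your diagonal/off-diagonal split is read off from the target rather than derived. With the factor $(-\tfrac12+y)^2$ you stated, there are no standalone $y_i^2$ terms to choose. Antisymmetry of the Vandermonde-type product cannot create them either: the correct identity holds coefficient by coefficient for $C_{\mathfrak m}$, before any functional is applied, and no symmetrisation is involved.

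Taken literally, your factor gives the wrong answer. For $k=h=1$ the functional is $y^m\mapsto 1/(m+1)!$. Then $(\tfrac12-y)^4$ yields $-1/80$, whereas $(\tfrac14-y+2y^2)^2$ yields $1/80$. The latter is the true value: directly, $\mathbb{E}|\mathcal{Z}_U''(0)|^2=\sum_{n=0}^{N}(n-N/2)^4\sim N^5/80$.

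\textbf{On \eqref{KW1}.} The paper gives no proof of its own here; it only cites \cite{kn:crs06,kn:keawei1} for the Bessel-series equivalence, which is the mechanism you outline. Your sketch is acceptable at that level, but it rests on \eqref{KW2}.
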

\begin{proof}
We will only prove \eqref{KW2} and this will be left to the appendix (see Proposition \ref{appendix prop}). The equivalence of \eqref{KW1} and \eqref{KW2} follows from using the Taylor expansion of the Bessel function. We refer the reader to \cite{kn:crs06, kn:keawei1} for a proof of this fact.
\end{proof}

\subsection{A contour integral variation of the Jacobi-Trudi formula}
The formulas provided in \cite{kn:keawei1}, and many other expressions in the literature on the joint moments, make use of a contour integral expression for averages of characteristic polynomials. We will give a new derivation of this expression (our Lemma \ref{contour products}) using an application of Lemma \ref{BG lemma} and a variation of the Jacobi-Trudi formula. In particular, our methods are based on classical theorems of representation theory. 

The following lemma is based on a contour integral expression for the complete homogeneous symmetric polynomials. We believe that this formulation of the Jacobi-Trudi formula is original.
\begin{lemma}[Jacobi-Trudi]\label{contour JC}
     Let $z(x)=\frac{1}{1-e^{-x}}$, $\lambda$ be a partition of length $\leq n$ and $\alpha_1,\ldots,\alpha_n$ be complex numbers. Then
    \[s_\lambda(e^{\alpha_1},\ldots ,e^{\alpha_n})=\det_{\ell(\lambda)\times \ell(\lambda)}\left(\frac{1}{2\pi\mf{i}} \oint_{\mathcal{C}}e^{(\lambda_i-i+j)w}\prod_{i=1}^nz(w-\alpha_i)\mathrm{d}w\right),\] where $\mathcal{C}$ is any contour enclosing the $\alpha_i$ and $0$.
    
\end{lemma}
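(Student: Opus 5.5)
The plan is to reduce the statement to a contour-integral formula for each complete homogeneous polynomial $h_m$, and then apply the Jacobi--Trudi identity (Theorem \ref{JacobiTrudi}) entry by entry.

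\textbf{Step 1: the key identity.} I would prove that for every integer $m$ (including negative $m$, where $h_m=0$),
\[
h_m(e^{\alpha_1},\ldots,e^{\alpha_n})=\frac{1}{2\pi\mf{i}}\oint_{\mathcal{C}}e^{mw}\prod_{i=1}^n z(w-\alpha_i)\,\mathrm{d}w .
\]
Write $x_i=e^{\alpha_i}$. Then $z(w-\alpha_i)=\frac{1}{1-x_ie^{-w}}=\frac{e^w}{e^w-x_i}$, so
\[
\prod_{i=1}^n z(w-\alpha_i)=\frac{e^{nw}}{\prod_{i=1}^n(e^w-x_i)} .
\]
I would substitute $u=e^w$, so that $\mathrm{d}w=\mathrm{d}u/u$. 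The integral becomes
\[
\frac{1}{2\pi\mf{i}}\oint\frac{u^{m+n-1}}{\prod_{i=1}^n(u-x_i)}\,\mathrm{d}u .
\]

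\textbf{Step 2: evaluating this integral.} Provided the image contour encircles every $x_i$ (and the origin, when $m+n-1<0$), the integral equals minus the residue at $u=\infty$. That residue is the coefficient of $u^{-1}$ in the expansion
\[
u^{m-1}\prod_{i=1}^n\bigl(1-x_i/u\bigr)^{-1}=\sum_{r\ge0}h_r(x)\,u^{m-1-r},
\]
so the integral is $h_m(x)$. For $m<0$ no term has $r=m$, so the coefficient vanishes; this correctly gives $h_m=0$, which matters for the entries of the Jacobi--Trudi determinant with $\lambda_i-i+j<0$.

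\textbf{Step 3: conclusion.} Substituting the identity from Step 1 into each entry $h_{\lambda_i-i+j}$ of the determinant in Theorem \ref{JacobiTrudi} yields exactly the stated formula.

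\textbf{Main obstacle: the contour.} The delicate point is checking that the $w$-contour $\mathcal{C}$, which encloses the $\alpha_i$ and $0$, maps under $u=e^w$ to a closed curve winding once around each $x_i$ and around $0$. Two cases need care.
\begin{itemize}
\item[(i)] The $\alpha_i$ are small. Then $\mathcal{C}$ can be taken to be a small circle, which $e^w$ maps injectively onto a simple curve around $1$ and the $x_i$. I also need the image to enclose $u=0$ when $m+n-1<0$. Here I would instead argue directly in $w$. The only poles of $e^{mw}\prod_i z(w-\alpha_i)$ inside $\mathcal{C}$ are at the $\alpha_i$; the point $w=0$ is not a pole, since $z(w-\alpha_i)$ has poles only at $w\in\alpha_i+2\pi\mf{i}\mb{Z}$. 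Taking $\mathcal{C}$ to exclude the other translates $\alpha_i+2\pi\mf{i}\mb{Z}$, the residues at the $\alpha_i$ coincide with the residues at the $x_i$ in the $u$-integral. The $u$-integral's possible pole at $u=0$ corresponds to $w=-\infty$ and plays no role.
\item[(ii)] Repeated $\alpha_i$. To remove the small-$\alpha$ restriction and handle repeated values, I would use the fact that both sides are analytic in $\alpha$ and extend the identity by continuity.
\end{itemize}
After this, the statement follows at once from Theorem \ref{JacobiTrudi}.
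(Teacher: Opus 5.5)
Your argument is correct in substance, but it takes a different route from the paper. The paper stays in the $w$-plane. It sums the residues at the (assumed simple) poles $w=\alpha_j$, obtaining $\sum_j x_j^{k+n-1}/\prod_{i\neq j}(x_j-x_i)$, and then identifies this with $h_k$ by expanding the bialternant formula for $s_{(k)}$ along its first row. You instead substitute $u=e^w$ and read off $h_m$ as minus the residue at infinity, using $\prod_i(1-x_i/u)^{-1}=\sum_r h_r u^{-r}$. Your version is shorter and needs no distinctness of the $x_i$, so your continuity argument for repeated values is not actually needed. It also explicitly covers the entries $h_0$ and $h_m$ with $m<0$ that occur in the Jacobi--Trudi determinant, whereas the paper states the contour identity only for $k\in\mb{N}$. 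You are also right that $\mathcal{C}$ must exclude the translates $\alpha_i+2\pi\mf{i}\mb{Z}$: the integrand is $2\pi\mf{i}$-periodic, so enclosing a translate double-counts a residue. And $0$ is not a pole. The paper uses this hypothesis only tacitly.

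One claim in your Step 1 is false, though harmlessly so: the identity does not hold for every integer $m$. Since $\frac{1}{2\pi\mf{i}}\oint_{e^{\mathcal{C}}}\frac{\mathrm{d}u}{u}=\frac{1}{2\pi\mf{i}}\oint_{\mathcal{C}}\mathrm{d}w=0$, the image curve never winds around $u=0$. Your proviso ``and the origin, when $m+n-1<0$'' can therefore never be satisfied. As your bullet (i) itself shows, the $w$-integral picks up only the residues at the $x_i$. For $m+n-1<0$ these sum to minus the residue at $u=0$, which is generally nonzero: for $n=1$, $m=-1$ the integral is $e^{-\alpha_1}$, not $h_{-1}=0$.

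The repair is to state Step 1 only for $m\ge 1-n$, and to observe that every Jacobi--Trudi entry lies in this range. For $i\le\ell(\lambda)$ we have $\lambda_i\ge 1$, hence $\lambda_i-i+j\ge 2-i\ge 2-n$. Then $u^{m+n-1}/\prod_i(u-x_i)$ has no pole at $u=0$, and your residue-at-infinity evaluation is valid.
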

\begin{proof}
We will prove that for $k\in\mb{N}$,
\begin{equation}\label{complete homo contour}
    h_k(e^{\alpha_1},\ldots,e^{\alpha_n})=\frac{1}{2\pi\mf{i}} \oint_{\mathcal{C}}e^{kw}\prod_{i=1}^nz(w-\alpha_i)\mathrm{d}w.
\end{equation}
Since $z(x)$ is holomorphic except for a simple pole at $x=0$,
by the residue theorem we have
\[\frac{1}{2\pi\mf{i}} \oint_{\mathcal{C}}e^{kw}\prod_{i=1}^nz(w-\alpha_i)\mathrm{d}w=\sum_{j=1}^n\lim_{w\to\alpha_j}(w-\alpha_j)e^{kw}\prod_{i=1}^nz(w-\alpha_i)=\sum_{j=1}^n\frac{e^{k\alpha_j}}{\prod_{\substack{1\le i\le n\\i\not=j}}1-e^{\alpha_i-\alpha_j}}.\]
We set $x_j=e^{\alpha_j}$ so that, after clearing denominators,
\begin{equation}\label{3.18}
\frac{1}{2\pi\mf{i}} \oint_{\mathcal{C}}e^{kw}\prod_{i=1}^nz(w-\alpha_j)\mathrm{d}w=\sum_{j=1}^n\frac{x_j^{k+n-1}}{\prod_{\substack{1\le i\le n\\i\not=j}}(x_j-x_i)}.\end{equation}
Given that for each $1\le j\le n$, 
\begin{equation}\prod_{\substack{1\le i\le n\\i\not=j}}(x_j-x_i)=(-1)^{1+j}\frac{\prod_{1\le i<l\le n}(x_i-x_l)}{\prod_{\substack{1\le i<l\le n\\i\not=j, l\not=j}}(x_i-x_l)},\end{equation} we obtain
\begin{equation}\label{3.19}
\sum_{j=1}^n\frac{x_j^{k+n-1}}{\prod_{\substack{1\le i\le n\\i\not=j}}(x_j-x_i)}=\frac{\sum_{j=1}^n(-1)^{1+j}x_j^{k+n-1}\prod_{\substack{1\le i<l\le n\\i\not=j, l\not=j}}(x_i-x_l)}{\det\left(x_i^{n-j}\right)_{1\le i,j\le n}}.\end{equation}
From Definition \ref{def schur}, it follows that $h_k(x_1,\ldots,x_n)=s_{(k)}(x_1,\ldots,x_n)$. The bialternant Cauchy–Jacobi formula \eqref{schur determinant}, applied to the partition $\lambda=(k)$, then gives
\[
\raisebox{5.5ex}{$h_k(x_1,\ldots,x_n)= \ \ \ $}
\frac{
\det\!\begin{bmatrix}
x_1^{k+n-1} & x_2^{k+n-1} & \cdots & x_n^{k+n-1}\\
x_1^{n-2} & x_2^{n-2} & \cdots & x_n^{n-2}\\
\vdots & \vdots & \ddots & \vdots\\
1 & 1 & \cdots & 1
\end{bmatrix}
}{
\det\left(x_i^{n-j}\right)_{1\le i,j\le n}
}.
\]
Therefore, by applying the Laplace expansion of the determinant along the first row, we see that
\begin{equation}h_k(x_1,\ldots,x_n)=\frac{\sum_{j=1}^n(-1)^{1+j}x_j^{k+n-1}\prod_{\substack{1\le i<l\le n\\i\not=j, l\not=j}}(x_i-x_l)}{\det\left(x_i^{n-j}\right)_{1\le i,j\le n}}.\end{equation}
We combine this with \eqref{3.19} and \eqref{3.18} to obtain \eqref{complete homo contour}. We now apply the Jacobi-Trudi formula, Theorem \ref{JacobiTrudi}, to give the result.
\end{proof}
The methods used in \cite{kn:crs06,kn:keawei1,kn:acrs} to calculate joint moments all relied on differentiating the following lemma, which is due to \cite{kn:crs06}. We use Lemma \ref{contour JC} to give a new proof.
\begin{lemma}[Lemma 3 of \cite{kn:crs06}]\label{contour products} Let $z(x)=\frac{1}{1-e^{-x}}$. Then, for $\alpha_1,\ldots , \alpha_{2k}\in\mb{C}$, we have
    \[
    \begin{split}
&\int_{\mathrm{U}(N)} \prod_{j=1}^k \Lambda_X(e^{-\alpha_j}) \Lambda_{X^\dagger}(e^{\alpha_{j+k}}) \, \mathrm{d}U\\ &= \frac{1}{k!(2\pi \mf{i})^k}
\int_{|w_i|=1}
e^{N\sum_{j=1}^k (w_j-\alpha_j)}
\prod_{\substack{1 \le i \le k \\ 1 \le j \le 2k}}
z(w_i - \alpha_j)
\prod_{i \ne j}
z(w_i - w_j)^{-1}
\prod_{j=1}^k \mathrm{d}w_j .
\end{split}\]
\end{lemma}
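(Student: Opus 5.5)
The plan is to combine Lemma~\ref{BG lemma}, which turns the average into a single Schur polynomial, with Lemma~\ref{contour JC}, which writes that Schur polynomial as a $k\times k$ determinant of contour integrals. I then expand the determinant and symmetrise to recover the factor $\prod_{i\ne j}z(w_i-w_j)^{-1}$.

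\textbf{Reduce to a Schur polynomial.} By invariance of Haar measure under $U\mapsto U^\dagger$, the roles of $\Lambda_U$ and $\Lambda_{U^\dagger}$ in Lemma~\ref{BG lemma} may be interchanged. I apply it with $x_j=e^{-\alpha_j}$ for $1\le j\le k$ and $x_{k+j}=e^{\alpha_{j+k}}$, with the appropriate labelling so that all $2k$ arguments of the Schur polynomial become $e^{\alpha_1},\dots,e^{\alpha_{2k}}$. This should give
\[
\int_{\mathrm{U}(N)}\prod_{j=1}^k \Lambda_X(e^{-\alpha_j})\Lambda_{X^\dagger}(e^{\alpha_{j+k}})\,\mathrm{d}U=e^{-N\sum_{j=1}^k\alpha_j}\,s_{\langle N^k\rangle}(e^{\alpha_1},\ldots,e^{\alpha_{2k}}).
\]
If the labelling produces $e^{\pm\alpha}$ with the wrong signs, I would use the homogeneity identity $s_{\langle N^k\rangle}(x^{-1})=(x_1\cdots x_{2k})^{-N}s_{\langle N^k\rangle}(x)$ for the complementary rectangle in $2k$ variables to repair it. The prefactor $e^{-N\sum\alpha_j}$ matches the $e^{-N\sum\alpha_j}$ part of the integrand.

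\textbf{Apply Lemma~\ref{contour JC} and expand.} I take $\lambda=\langle N^k\rangle$ and $n=2k$. Then $s_{\langle N^k\rangle}(e^{\alpha_1},\dots,e^{\alpha_{2k}})$ is the $k\times k$ determinant with $(i,j)$ entry
\[
\frac{1}{2\pi\mathfrak{i}}\oint e^{(N-i+j)w}\prod_{l=1}^{2k}z(w-\alpha_l)\,\mathrm{d}w .
\]
Using multilinearity in the rows, I give row $i$ its own variable $w_i$. The determinant becomes a $k$-fold integral of
\[
\prod_i e^{Nw_i}\prod_{i,l}z(w_i-\alpha_l)\cdot \det\bigl(e^{(j-i)w_i}\bigr).
\]
With $u_i=e^{w_i}$, we have $\det(e^{(j-i)w_i})=\prod_i u_i^{1-i}\,\det(u_i^{j-1})$.

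\textbf{Symmetrise.} Everything except $\prod_i u_i^{1-i}\det(u_i^{j-1})$ is symmetric in $w_1,\dots,w_k$, and the contours are identical. I therefore average over $S_k$ acting on the variables. The Vandermonde factor $\det(u_i^{j-1})$ is antisymmetric, so the average turns $\prod_i u_i^{1-i}$ into $\frac{1}{k!}\det(u_i^{1-j})$. This yields $\frac{1}{k!}\det(u_i^{j-1})\det(u_i^{1-j})$, which equals
\[
\frac{1}{k!}\prod_{i<j}(u_j-u_i)(u_i^{-1}-u_j^{-1})
=\frac{1}{k!}\prod_{i<j}(1-u_j/u_i)(1-u_i/u_j)
=\frac{1}{k!}\prod_{i\ne j}z(w_i-w_j)^{-1}.
\]
Combined with the first step, this gives exactly the stated formula with the $\frac{1}{k!(2\pi\mathfrak{i})^k}$ prefactor.

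\textbf{Main obstacle.} The step I expect to be delicate is the bookkeeping of signs and orientations, together with the contour. Lemma~\ref{contour JC} requires a contour enclosing all the $\alpha_l$ and $0$, and the poles of $z(w-\alpha_l)$ at $w=\alpha_l+2\pi\mathfrak{i}\mathbb{Z}$ must not be enclosed except at $\alpha_l$ itself. I would first prove the identity for $|\alpha_l|$ small, where $|w|=1$ is admissible and the symmetrisation is legitimate because all $w_i$ run over the same contour. I would then extend to general $\alpha_l$ by analytic continuation, since the left side is a polynomial in the $e^{\pm\alpha_l}$; equivalently, one deforms the contour as the statement's hypotheses allow. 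The parity sign from reversing $\prod_{i<j}(u_j-u_i)$ should cancel between the two Vandermonde factors, and I would check this against the case $k=1$.
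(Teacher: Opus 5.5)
Your proposal is correct and is essentially the paper's proof: Lemma~\ref{BG lemma} gives $e^{-N\sum_{j\le k}\alpha_j}s_{\langle N^k\rangle}(e^{\alpha_1},\ldots,e^{\alpha_{2k}})$, and Lemma~\ref{contour JC} turns this into a $k\times k$ determinant of contour integrals. Your row-by-row expansion followed by averaging over $S_k$ is exactly the standard proof of Andr\'eief's identity, which the paper cites directly before using $\det(e^{(i-1)w_j})\det(e^{-(i-1)w_j})=\prod_{i\ne j}z(w_i-w_j)^{-1}$.

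Two minor points. First, $\det(u_i^{1-j})=\prod_{i<j}(u_j^{-1}-u_i^{-1})$, so your intermediate product is off by $(-1)^{\binom{k}{2}}$; the end-to-end identity is still right, and a $k=2$ check (not $k=1$) would expose the slip. Second, with the contours literally fixed at $|w_i|=1$ the formula only holds when every $\alpha_j$ lies inside the unit circle. So your extension to general $\alpha_j$ must be read as moving the contours around each $\alpha_j$ while avoiding $\alpha_j+2\pi\mathfrak{i}m$, $m\neq 0$, rather than continuing with the circle fixed, a point the paper's statement and proof also gloss over.
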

\begin{proof}
 By Lemma \ref{BG lemma}, 
    \[\int_{\mathrm{U}(N)} \prod_{j=1}^k \Lambda_X(e^{-\alpha_j}) \Lambda_{X^\dagger}(e^{\alpha_{j+k}}) \, \mathrm{d}U=e^{-N\sum_{j=1}^k\alpha_j}s_{\langle N^k\rangle}(e^{\alpha_1},\ldots ,e^{\alpha_{2k}}).\]
    Applying Lemma \ref{contour JC} followed by Andréief's identity gives
      \[
    \begin{split}
&\int_{\mathrm{U}(N)} \prod_{j=1}^k \Lambda_X(e^{-\alpha_j}) \Lambda_{X^\dagger}(e^{\alpha_{j+k}}) \, \mathrm{d}U\\ &= \frac{1}{k!(2\pi \mf{i})^k}
\int_{|w_i|=1}
e^{N\sum_{j=1}^k (w_j-\alpha_j)}
\prod_{\substack{1 \le i \le k \\ 1 \le j \le 2k}}
z(w_i - \alpha_j)
\det_{k\times k}\left(e^{(i-1)w_j}\right)\det_{k\times k}\left(e^{-(i-1)w_j}\right)
\prod_{j=1}^k \mathrm{d}w_j .
\end{split}\]
It is then easy to verify the identity:
\[\det_{k\times k}\left(e^{(i-1)w_j}\right)\det_{k\times k}\left(e^{-(i-1)w_j}\right)=\prod_{i \ne j}
z(w_i - w_j)^{-1}\]
\end{proof}

\subsubsection{A short proof of a result of Alvarez, Conrey, Snaith and Rubinstein}
In this subsection, we will give an alternative proof of the following theorem due to Alvarez, Conrey, Snaith and Rubinstein \cite{kn:acrs}. Our strategy is to use the Jacobi-Trudi formula and a Taylor expansion of the complete homogeneous symmetric polynomials around the identity. The purpose of reestablishing this result is to emphasise that the joint moment problem can be understood through combinatorics and without the introduction of contour integrals.
\begin{theorem}[Theorem 2.1 of \cite{kn:acrs}]\label{ACRS theorem1}
 For $k$ a non-negative integer, and $x \in \C$ with $|x|=1$,
\begin{eqnarray}
   \label{eq ACRS}
   &&  \int_{\mathrm{U}(N)} \abs{\Lambda_U'(x)}^{2k} \mathrm{d}U \nonumber \\
    && = (-1)^{\binom{k}{2}}\sum_{r=0}^k \binom{k}{r}N^{k-r} (-1)^r\sum_{\substack{\sum_{i=1}^k m_{i} = k+r}}\binom{k+r}{m_{1}, \dots, m_{k}}    \nonumber  
    \det_{k\times k}\left[ \binom{N+k+i+j-2}{2k+m_j -1} \right].
    \end{eqnarray}
\end{theorem}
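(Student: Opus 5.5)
By rotation invariance we may take $x=1$. The plan is to combine three earlier ingredients: the basic relation (Proposition \ref{linear lambda}) to reduce to the averages $\mathcal{M}_N(k;\langle 1^{k+r}\rangle)$, Proposition \ref{prop modified moments} to turn these into derivatives of $s_{\langle N^k\rangle}$ at $(1,\ldots,1)$, and the Jacobi--Trudi formula (Theorem \ref{JacobiTrudi}) together with an explicit Taylor expansion of $h_m$ about the identity.

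\textbf{Step 1 (reduction to $\mathcal{M}_N$).} Proposition \ref{linear lambda} with $n=1$ gives $\Lambda'_{U^\dagger}(1)/\Lambda_{U^\dagger}(1)=N-\Lambda'_U(1)/\Lambda_U(1)$, since $L(1,1)=1$. Hence
\[
\bigl|\Lambda_U'(1)\bigr|^{2k}=\bigl|\Lambda_U(1)\bigr|^{2k}\Bigl(\tfrac{\Lambda'_U(1)}{\Lambda_U(1)}\Bigr)^{k}\Bigl(N-\tfrac{\Lambda'_U(1)}{\Lambda_U(1)}\Bigr)^{k}.
\]
Expanding binomially and integrating yields
\[
\int_{\mathrm{U}(N)}\bigl|\Lambda_U'(1)\bigr|^{2k}\,\mathrm{d}U=\sum_{r=0}^k\binom{k}{r}N^{k-r}(-1)^r\,\mathcal{M}_N(k;\langle1^{k+r}\rangle).
\]
By Proposition \ref{prop modified moments}, $\mathcal{M}_N(k;\langle1^{k+r}\rangle)=\partial_{x_1}\cdots\partial_{x_{k+r}}s_{\langle N^k\rangle}(x_1,\ldots,x_{2k})$ evaluated at $x_i=1$. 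This is a derivative in $k+r$ distinct variables, each taken once.

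\textbf{Step 2 (Taylor expansion of $h_m$ at the identity).} With $n=2k$ variables, write
\[
\sum_m h_m(1+y_1,\ldots,1+y_n)t^m=\prod_i(1-t-ty_i)^{-1}=(1-t)^{-n}\prod_i\Bigl(1-\tfrac{ty_i}{1-t}\Bigr)^{-1}.
\]
Reading off coefficients, the coefficient of the monomial $y^a$ in $h_m(1+y)$ is $t^{|a|}(1-t)^{-n-|a|}$ extracted at $t^m$, namely $\binom{m+n-1}{\,n-1+|a|\,}$. For a squarefree set $S$ of variables of size $s$ this gives
\[
\partial^{S}h_m(\{1\}^{2k})=\binom{m+2k-1}{2k-1+s}.
\]

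\textbf{Step 3 (differentiating Jacobi--Trudi).} Write $s_{\langle N^k\rangle}=\det_{k\times k}(h_{N-i+j})=\sum_\sigma\operatorname{sgn}(\sigma)\prod_i h_{N-i+\sigma(i)}$. Apply the Leibniz rule to each product, distributing the $k+r$ distinct derivatives among the $k$ factors, with factor (row) $i$ receiving a set of size $m_i$. The number of such distributions with prescribed sizes is $\binom{k+r}{m_1,\ldots,m_k}$, and by Step 2 the resulting value depends only on the sizes. Reassembling the determinant gives
\[
\mathcal{M}_N(k;\langle 1^{k+r}\rangle)=\sum_{m_1+\cdots+m_k=k+r}\binom{k+r}{m_1,\ldots,m_k}\det_{k\times k}\left[\binom{N-i+j+2k-1}{2k-1+m_i}\right].
\]
Transposing the matrix (so that $m$ indexes columns) and reversing the order $i\mapsto k+1-i$ turns the top entry into $N+k+i+j-2$. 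The reversal is a permutation with sign $(-1)^{\binom{k}{2}}$. Substituting into Step 1 gives \eqref{eq ACRS}.

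The main obstacle is bookkeeping rather than any conceptual difficulty. One must check that the sign $(-1)^{kN}$ in the proof of Proposition \ref{prop modified moments} has genuinely been absorbed, so that no stray sign survives. One must also perform the row/column reindexing carefully, so that the binomial's top index and the placement of $m_j$ land exactly as in the stated determinant, with precisely the sign $(-1)^{\binom{k}{2}}$. I would verify this on the case $k=1$ before finalising.
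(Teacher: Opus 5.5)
Your proposal is correct and follows the paper's proof essentially step for step. You reduce to $\mathcal{M}_N(k,k+r)$ via the $n=1$ basic relation, which is the case $\alpha=\langle 1^k\rangle$ of Theorem~\ref{Theorem 1}. You pass through Proposition~\ref{prop modified moments} and Jacobi--Trudi to a determinant of $h_{N-i+j}$, distribute the $k+r$ single derivatives by multilinearity with multinomial weights, and use $\partial^S h_m(\{1\}^{2k})=\binom{m+2k-1}{2k-1+|S|}$. The one small difference is that you derive this coefficient from the generating function $\prod_i(1-t-ty_i)^{-1}$, whereas the paper appeals to the generalised binomial theorem.

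The only real bookkeeping point is the one you anticipated. Because you attach $m_i$ to rows, the reversal that produces the top entry $N+k+i+j-2$ acts on the index carrying $m$; literally reversing rows \emph{after} transposing gives $N+3k-i-j$ instead. So you end with $m_{k+1-j}$ and must relabel, which is harmless because $\binom{k+r}{m_1,\ldots,m_k}$ and the constraint $\sum_i m_i=k+r$ are invariant under permuting the $m_i$. The paper avoids this by differentiating columns, so that reversing the rows alone gives the stated determinant with sign $(-1)^{\binom{k}{2}}$. The sign $(-1)^{kN}$ you worry about is already absorbed in the statement of Proposition~\ref{prop modified moments}, since it cancels $(-1)^{|\langle N^k\rangle|}$ there.
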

\begin{proof}
    Using Theorem \ref{Theorem 1} with $\alpha=\langle1^k\rangle$, which expresses $|\Lambda|_N(k;2\!\cdot\!\alpha)$ in terms of $\mathcal{M}_N$, we have
\begin{equation}
    \int_{\mathrm{U}(N)} \abs{\Lambda_U'(x)}^{2k} \mathrm{d}U=\sum_{r=0}^k\binom{k}{r}N^{k-r}(-1)^r\mathcal{M}_N(k,k+r).
\end{equation}
     We may apply Proposition \ref{prop modified moments} to express $\mathcal{M}_N$ in terms of the Taylor coefficients of Schur polynomials:
        \begin{align}
\mathcal{M}_N(k,k+r)&=\partial_{x_1}\cdots\partial_{x_{k+r}}s_{\langle N^k\rangle}(1+x_1,\ldots ,1+x_{2k})\big|_{x_1=\cdots =x_{2k}=0}\\
        &=\partial_{x_1}\cdots\partial_{x_{k+r}}\det_{k\times k}\left(h_{N-i+j}(1+x_1,\ldots ,1+x_{2k})\right)\big|_{x_1=\cdots =x_{2k}=0},\label{4.31}
        \end{align}
        where the second equality follows from the Jacobi-Trudi formula.
        When differentiating a determinant with respect to a variable $t$, by the product rule we have
        \[\frac{\mathrm{d}}{\mathrm{d}t}\det(a_1,\ldots ,a_k)=\det(\frac{\mathrm{d}}{\mathrm{d}t}a_1,\ldots ,a_k)+\cdots+\det(a_1,\ldots ,\frac{\mathrm{d}}{\mathrm{d}t}a_k),\]
        where the $a_i$ are column vectors. We repeat this $k+r$ times to the determinant in \eqref{4.31}. Given that the $h_i$ are symmetric polynomials, we only need to count how many times a given column has been differentiated. That is,
          \begin{equation}
        \begin{split}\label{4.32}
            &\partial_{x_1}\cdots\partial_{x_{k+r}}\det_{k\times k}\left(h_{N-i+j}(1+x_1,\ldots ,1+x_{2k})\right)\big|_{x_1=\cdots =x_{2k}=0}\\
           & =\sum_{\sum_{j=1}^km_j=k+r}\binom{k+r}{m_1,\ldots ,m_k}\det_{k\times k}\left( \partial_{x_1}\cdots\partial_{x_{m_j}}h_{N-i+j}(\{1\}^{2k})\right).
            \end{split}
        \end{equation}
These derivatives can be computed using the Taylor expansion of $h_m(1+x_1,\ldots ,1+x_n)=s_{(m)}(1+x_1,\ldots ,1+x_n)$, which is given by
\begin{equation}\label{complete homo taylor}h_m(1+x_1,\ldots ,1+x_n)=\sum_{l\ge 0}\binom{n+m-1}{m-l}h_l(x_1,\ldots,x_n).\end{equation}
This can be derived easily from the generalised binomial theorem, Theorem \ref{binom}.
        After setting $m=N-i+j$ and $n=2k$, we apply \eqref{complete homo taylor} to obtain
        \begin{equation}\label{4.33}
            \partial_{x_1}\cdots\partial_{x_{m_j}}h_{N-i+j}(\{1\}^{2k})=\binom{N+2k-i+j-1}{2k+m_j-1}.
        \end{equation}
        Combining \eqref{4.31}, \eqref{4.32} and \eqref{4.33} gives
        \begin{equation}
    \mathcal{M}_N(k,k+r)=\sum_{\sum_{j=1}^km_j=k+r}\binom{k+r}{m_1,\ldots ,m_k}\det_{k\times k}\left(\binom{N+2k-i+j-1}{2k+m_j-1}\right).
        \end{equation}
       After reordering the rows of this determinant, we pick up a factor $(-1)^{\binom{k}{2}}$ and the result follows.
        
\end{proof}

\appendix 
\section{Appendix}
\begin{proposition}\label{appendix prop}
     Let $h,k\in\mb{N}_0$ with $h\le k$ and set $\alpha=\langle2^{h}\rangle$. Then,
    \begin{equation}
\begin{split}
|\mathcal{Z}|(k;2\!\cdot\!\alpha) = \ &  \sum_{\substack{l_0 + \dots + l_k = 2h \\ l_i\ge0}} \sum_{\substack{t_0 + \dots + t_k = 2l_0 \\ t_i\ge0}} (-2)^{-t_0} \binom{2h}{l_0, \dots, l_k} \binom{2l_0}{t_0, \dots, t_k} \\
& \times \left( \prod_{i=1}^{k} \frac{1}{(2k - i + 2l_i + t_i)!} \right) \left( \prod_{1 \le i < j \le k} (2l_i - 2l_j + t_i - t_j - i + j) \right).
\end{split}
\end{equation}
\end{proposition}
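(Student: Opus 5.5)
The plan is to specialise Theorem \ref{Theorem 2} and Corollary \ref{Cor B} to $\alpha=\langle 2^h\rangle$, and then repackage the resulting sum over row-sum data as a coefficient extraction from a product of $2h$ identical quadratic forms. Here $2\!\cdot\!\alpha=\langle 2^{2h}\rangle$ and $|\alpha|=2h$, so the sign $(-1)^{|\alpha|}$ equals $1$. By \eqref{asymp 1.2},
\[
|\mathcal{Z}|(k;2\!\cdot\!\alpha)=\sum_{\beta\le 2\cdot\alpha}\Big(\tfrac{-1}{2}\Big)^{4h-|\beta|}\binom{2\!\cdot\!\alpha}{\beta}\mathcal{M}(k;\beta),
\]
where $\beta=(\beta_1,\ldots,\beta_{2h})$ ranges over weak compositions with $\beta_q\in\{0,1,2\}$. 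I will insert Corollary \ref{Cor B} for each $\mathcal{M}(k;\beta)$. Corollary \ref{Cor B} is stated for partitions, but $\mathcal{M}$ and $N_{\beta,\mf{m}}$ are invariant under reordering $\beta$, so it applies to weak compositions as well. This gives a sum over $2h\times k$ nonnegative integer matrices $A=(a_{q,i})$ with row sums $\beta_q$. The weight of each matrix is
\[
\prod_q \binom{2}{\beta_q}\beta_q!\,(-1/2)^{2-\beta_q},
\]
times $F(\mf{m})=\prod_{i<j}(m_i-m_j+j-i)/\prod_i(m_i+2k-i)!$, where $\mf{m}=\operatorname{col}(A)$.

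The key step is to observe that the weight factorises over rows, and that the total can be read off as a coefficient. Introduce formal variables $x_1,\ldots,x_k$, put $x_0=-1/2$, and record each row's contribution by $\prod_i x_i^{a_{q,i}}$. I will check the four row types directly.
\begin{itemize}
\item $\beta_q=0$: the weight is $1/4=x_0^2$.
\item $\beta_q=1$ with its entry in column $i$: the weight is $2\cdot(-1/2)=2x_0x_i$.
\item $\beta_q=2$ split over columns $i\ne j$: the weight is $2=2x_ix_j$.
\item $\beta_q=2$ with both units in column $i$: the weight is $2$, that is $2x_i^2$.
\end{itemize}
Comparing with $(x_0+\cdots+x_k)^2$, every monomial matches except that the $x_i^2$ terms with $i\ge1$ carry coefficient $2$ instead of $1$. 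Hence each row's generating polynomial is
\[
(x_0+x_1+\cdots+x_k)^2+\sum_{i=1}^k x_i^2 .
\]
It follows that $|\mathcal{Z}|(k;2\!\cdot\!\alpha)$ is obtained by expanding
\[
\Big((x_0+\cdots+x_k)^2+\sum_{i\ge1}x_i^2\Big)^{2h},
\]
and replacing each monomial $x_0^{t_0}\prod_{i\ge1}x_i^{m_i}$ by $(-1/2)^{t_0}F(\mf{m})$.

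To finish, I expand in two stages. First, the multinomial theorem over the $2h$ rows chooses $l_i$ rows of type $x_i^2$ for $i\ge1$ and $l_0$ rows of type $(\sum x)^2$. This produces
\[
\binom{2h}{l_0,\ldots,l_k}\prod_{i\ge1}x_i^{2l_i}\,(x_0+\cdots+x_k)^{2l_0}.
\]
Second, I expand $(x_0+\cdots+x_k)^{2l_0}$ with multinomial coefficients $\binom{2l_0}{t_0,\ldots,t_k}$. The column sums are then $m_i=2l_i+t_i$, and $x_0^{t_0}$ contributes $(-2)^{-t_0}$. Substituting into $F$ gives exactly the stated formula, including the shifted Vandermonde factor $\prod_{i<j}(2l_i-2l_j+t_i-t_j-i+j)$.

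The main obstacle is the bookkeeping of the factor $\binom{2\cdot\alpha}{\beta}\beta!$ against the count $N_{\beta,\mf{m}}$. I must make sure that the multiplicity $2$ of the within-column pairs is accounted for exactly once, and not double-counted between the $l_i$ and $t_i$ parts of the expansion. The row-by-row case check above is where I would verify this carefully. No convergence issues arise, because all sums are finite.
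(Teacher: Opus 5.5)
Your proposal is correct and follows the paper's proof essentially step for step. The paper likewise combines Theorem \ref{Theorem 2} with Corollary \ref{Cor B} (its equation \eqref{4.8}) and writes $N_{\beta,\mathfrak{m}}$ as the coefficient of $x^{\mathfrak{m}}$ in $h_\beta$; it then uses $\tfrac14-h_1+2h_2=(\tfrac12-\sum_i x_i)^2+\sum_i x_i^2$, which is exactly your row generating polynomial with $x_0=-\tfrac12$, before the same two-stage multinomial expansion. The double-counting worry you flag does not arise, since once the per-row polynomial is verified the rest is a polynomial identity.
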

\begin{proof}
    By \eqref{4.8}, and using the fact that $|\alpha|$ is even, we have
    \begin{equation}\label{4.11}
        |\mathcal{Z}|(k;2\!\cdot\!\alpha)=(-1)^{|\alpha|}\sum_{\substack{\mf{m}=(m_1,\ldots,m_k)\\m_i\ge0}}C_{{\mf{m}}}\frac{\prod_{1\le i<j\le k}(m_i-m_j+j-i)}{\prod_{i=1}^k(m_i+2k-i)!},
    \end{equation}
    where $C_{{\mf{m}}}\equiv C_{m_1,m_2,\ldots ,m_k}$ is defined as 
    \begin{equation}\label{Cm}
    C_{\mf{m}}=\sum_{\substack{\beta\leq 2\cdot\alpha\\ }}\left(\frac{-1}{2}\right)^{2|\alpha|-|\beta|}\frac{(\alpha!)^2}{(2\!\cdot\!\alpha-\beta)!}N_{\beta,{m}}.\end{equation}
    By Lemma \ref{Kostkalemma}, we have 
    \[N_{\beta,\mf{m}}=[x_1^{m_1}\cdots x_k^{m_k}]h_{\beta}(x_1,\ldots,x_k),\]
where $[x_1^{m_1}\cdots x_k^{m_k}]h_{\beta}(x_1,\ldots,x_k)$ denotes the coefficient of $x_1^{m_1}\cdots x_k^{m_k}$ in the monomial expansion of $h_{\beta}(x_1,\ldots,x_k)$. Furthermore, 
it is a simple check to see that
\[\frac{1}{4}-h_{1}(x_1,\ldots,x_k)+2h_{2}(x_1,\ldots,x_k)=\sum_{l\le2}\left(-\frac{1}{2}\right)^{2-l}\frac{2!}{(2-l)!}h_l(x_1,\ldots,x_k).\]
Therefore, using \eqref{Cm} with $\alpha=\langle2^h\rangle$, we have
        \begin{equation}
        \begin{split}
        C_{\mf{m}}&=[x_1^{m_1}\cdots x_k^{m_k}]\prod_{i=1}^{2h}\sum_{\beta_i\le 2\cdot\alpha_i}\left(-\frac{1}{2}\right)^{2\cdot\alpha_i-\beta_i}\frac{2\!\cdot\!\alpha_i!}{(2\!\cdot\!\alpha_i-\beta_i)!}h_{\beta_i}(x_1,\ldots,x_k)\\
       & =
       [x_1^{m_1}\cdots x_k^{m_k}] \left(\frac{1}{4}-h_1(x_1,\ldots,x_k)+2h_2(x_1,\ldots,x_k)\right)^{2h},
        \end{split}
    \end{equation}
    where we have used the fact that $2\!\cdot\!\alpha_i=2$ for all $i\le 2h$ in the second equality.
   From the definition of the complete homogeneous polynomials, we have
    \begin{align}
        &h_1(x_1,\ldots ,x_k)=\sum_{i=1}^k x_i\quad \text{and} \quad 2h_2(x_1,\ldots ,x_k)=\left(\sum_{i=1}^kx_i\right)^2+\sum_{i=1}^k x_i^2
    \end{align}
    Therefore,
    \begin{equation}
        \left(\frac{1}{4}-h_1(x_1,\ldots ,x_k)+2h_2(x_1,\ldots ,x_k)\right)^{2h}=\left(\left(\frac{1}{2}-\sum_{i=1}^k x_i\right)^2 +\sum_{i=1}^k x_i^2\right)^{2h}.
    \end{equation}
    Expanding this expression into a sum over multinomials gives
    \begin{equation}
        \begin{split}
             &\left(\left(\frac{1}{2}-\sum_{i=1}^k x_i\right)^2 +\sum_{i=1}^kx_i^2\right)^{2h}\\
             &=\sum_{\substack{l_0 + \dots + l_k = 2h \\ }} \sum_{\substack{t_0 + \dots + t_k = 2l_0 \\ }} (-2)^{-t_0} \binom{2h}{l_0, \dots, l_k} \binom{2l_0}{t_0, \dots, t_k} \prod_{i=1}^kx_i^{2l_i+t_i},
        \end{split}
    \end{equation}
    and thus, we obtain
    \begin{equation}
        C_{\mf{m}}=\sum_{\substack{l_0 + \dots + l_k = 2h \\ }} \sum_{\substack{t_0 + \dots + t_k = 2l_0 \\ 2l_i+t_i=m_i \ (1\le i\le k)}} (-2)^{-t_0} \binom{2h}{l_0, \dots, l_k} \binom{2l_0}{t_0, \dots, t_k}.
    \end{equation}
    Combining this with \eqref{4.11} and making the substitution $m_i=2l_i+t_i$ gives the result.
\end{proof}
\bibliographystyle{alpha}
\bibliography{bibliography}
\end{document}